\DeclareRobustCommand{\SkipTocEntry}[5]{}
\newtheorem{theorem}{Theorem}[section]
\newtheorem{lemma}[theorem]{Lemma}
\newtheorem{proposition}[theorem]{Proposition}
\newtheorem{corollary}[theorem]{Corollary}
\theoremstyle{definition}
\newtheorem{definition}[theorem]{Definition}
\theoremstyle{remark}
\newtheorem{remark}{Remark}[section]
\newtheorem{example}{Example}[section]
\newcounter{ticklistc}
\newcommand\old[1]{}
\def\opname{\operatorname}
\renewcommand{\Im}{\opname{Im}}
\renewcommand{\Re}{\opname{Re}}
\newcommand{\Arf}{\opname{Arf}}
\newcommand{\Hom}{\opname{Hom}}
\newcommand{\sign}{\opname{sign}}
\newcommand\wh[1]{\widehat{#1}}
\newcommand\matr[4]{\left(\begin{array}{cc}#1 & #2 \cr #3 & #4\end{array}\right)}
\newcommand\Pf[1]{\opname{Pf}[\,#1\,]} 
\newcommand\Proj[2]{\opname{Proj}\,[\,#1\,;\,#2\,]}
\newcommand{\Z}{\mathbb Z}
\newcommand{\R}{\mathbb R}
\newcommand{\C}{\mathbb C}
\newcommand{\E}{\mathbb{E}}
\renewcommand{\P}{\mathbb{P}}
\renewcommand{\S}{\mathcal{\EuScript S}}
\newcommand{\SI}{\Sigma}
\newcommand{\wind}{\opname{wind}}
\def\veps{\varepsilon} \def\reps{\varepsilon}
\def\rB{\mathrm{B}}
\def\rC{\mathrm{C}}
\def\rD{\mathrm{D}}
\def\rE{\mathrm{E}}
\def\rF{\mathrm{F}}
\def\rI{\mathrm{I}}
\def\rJ{\mathrm{J}}
\def\rK{\mathrm{K}}
\def\rS{\mathrm{S}}
\def\rT{\mathrm{T}}
\def\rU{\mathrm{U}}
\def\rW{\mathrm{W}}
\def\rY{\mathrm{Y}}
\def\cC{\mathcal{C}}
\def\cD{\mathcal{D}}  \def\D{\cD}
\def\cE{\mathcal{E}}
\def\cO{\mathcal{O}}
\def\cS{\mathcal{S}}
\def\cZ{\mathcal{Z}}
\def\lan{\prec\!} \def\ran{\!\succ}
\def\btu{\!\bigtriangleup\!}
\def\EE{\overrightarrow{E}}
\def\KW{\mathcal{K}\mathcal{W}}
\def\rt{\opname{t}}
\def\rw{\opname{w}}
\begin{document}

\title{Revisiting the combinatorics of the 2D Ising model}

\author{Dmitry Chelkak}
\address{Holder of the ENS--MHI chair funded by MHI. D\'epartement de math\'ematiques et applications de l'ENS, Ecole Normale Sup\'erieure PSL Research University, CNRS UMR 8553, Paris 5\`eme.
On leave from St.~Petersburg Department of Steklov Mathematical Institute RAS.}
\email{dmitry.chelkak@ens.fr}

\author{David Cimasoni}
\address{Universit\'e de Gen\`eve, Section de math\'ematiques, 2-4 rue du Li\`evre, Case postale 64, 1211 Gen\`eve 4, Switzerland}
\email{david.cimasoni@unige.ch}

\author{Adrien Kassel}
\address{ETH Z\"urich, Departement Mathematik, R\"amistrasse 101, 8092 Z\"urich, Switzerland}
\email{adrien.kassel@math.ethz.ch}

\subjclass[2000]{82B20}
\keywords{Ising model, Kac--Ward matrix, spin correlations, fermionic observables, discrete holomorphic functions, spin structures, double-Ising model.}

\begin{abstract}
We provide a concise exposition with original proofs of combinatorial formulas for the 2D Ising model partition function,
multi-point fermionic observables, spin and energy density correlations, for general
graphs and interaction constants, using the language of Kac--Ward matrices. We also give a brief account of the relations between various alternative formalisms which have been used in the combinatorial study of the planar Ising model: dimers and Grassmann variables, spin and disorder operators, and, more recently, s-holomorphic observables. In addition, we point out that these formulas can be extended to the double-Ising model, defined as a pointwise product of two Ising spin configurations on the same discrete domain, coupled along the boundary.
\end{abstract}

\maketitle


\setcounter{tocdepth}{2}

\addtocontents{toc}{\setlength\parskip{4pt}}

\tableofcontents

\section{Introduction}\label{sec:introduction}

\subsection{Overview}

The two-dimensional Ising model -- famous toy-model of magnetic interaction introduced by Lenz and initially studied by Ising~\cite{Ising} in dimension $1$ -- has been the subject of extensive activity following the proof of existence of a phase transition by Peierls~\cite{Peierls}, the prediction of its critical temperature by Kramers and Wannier~\cite{Kramers-Wannier}, and the computation of its free energy by Onsager~\cite{Onsager} and Kaufman~\cite{Kaufman,Kaufman-Onsager}. There have been literally thousands of papers on the subject and a standard gateway to the main developments of the last century is the classical text of McCoy and Wu~\cite[Chapter I]{McCoy-Wu}. This book focuses on one of the mainstream approaches to the study of the Ising model: the~\emph{combinatorial method}, which in contrast to the algebraic method of Onsager--Kaufman, is based on combinatorial bijections. The founding papers of this method include a series of works by Kac, Ward, Potts, Hurst, Green, Kasteleyn, Montroll, Fisher, and others~\cite{Kac-Ward, Potts-Ward, Hurst-Green, Kasteleyn-63,Montroll-Potts-Ward-63,Fisher-66}. Yet an alternative approach was proposed by Baxter and Enting~\cite{Baxter-399th} based on invariance under local star-triangle transform of the underlying planar graph~\cite{Baxter-1978}. These techniques enabled a broad understanding of the (infinite-volume limit of the) model, which by the 1970s was widely considered to be a successfully closed case in the mathematical physics community.

In more recent times, a deep algebraic structure of (infinite-volume) spin correlations on regular 2D lattices was found, in particular due to the work of Wu, McCoy, Tracy, and Barouch~\cite{Wu-McCoy-Tracy-Barouch} on Painlev\'{e} equations, the work of Sato, Miwa, and Jimbo~\cite{SMJ-77,SMJ-80b,Jimbo-89} on isomonodromic deformations and $\tau$-functions, and the related work of Perk~\cite{Perk} {(see also~\cite{McCoy-Perk-Wu})} and Palmer~\cite{Palmer-book}. This is surveyed in~\cite[Chapter XVII]{McCoy-Wu}.
In the 2000s, the model was revived yet again by Smirnov~\cite{smirnov-icm-2006,smirnov-icm-2010}, when Schramm's invention~\cite{Schramm-SLE} of SLE curves led to the emergence of a new field focusing on rigorously proving convergence of 2D lattice models to their conformally invariant continuum counterparts using discrete complex analysis techniques. This allowed new developments concerning the fine understanding of the conformal invariance of the {critical} Ising model in \emph{general} planar domains, both from the geometric (convergence of interfaces to SLE curves) and the analytic (confirming Conformal Field Theory predictions for the scaling limits of correlation functions) viewpoints; see~\cite{Smirnov-10,Duminil-Smirnov-lectures,chelkak-smirnov,hongler-smirnov,Hongler-thesis,Hongler-Kytola,
Chelkak-Izyurov,CHI,Kemppainen-Smirnov,CDCHKS,izyurov-mult,izyurov-free}. A number of related results can be found in~\cite{Duminil-Hongler-Nolin,Beffara-Duminil-nearcrit,Dub,Camia-Garban-Newman2,Camia-Garban-Newman, hongler-kytola-zahabi,hongler-and-co,Chelkak-Duminil-Hongler,Benoist-Duminil-Hongler}. In parallel, further developments were made on various algorithmic and algebraic aspects of the model (e.g., see~\cite{Lubetzky-Sly} and~\cite{Kenyon-Pemantle}).

This paper is about the combinatorics of the 2D {nearest-neighbor} Ising model on \emph{general finite weighted graphs} and one of its main goals is to make the basic methods and formalisms used in most of the works cited above better known and available in a practical way to the \emph{probability and combinatorics} community which has been rather active on this topic recently~\cite{BdT1,BdT-Z,BdT-xor,BdT4,Cimasoni-CMP,Li,Cimasoni-Duminil,Kager-Lis-Meester,Lis-2013,Lis-2014,Duminil-Garban-Pete,MIT-guys,Cambridge-guys,Costantino}.
To that end, we focus on presenting and proving combinatorial formulas for the partition function, multi-point fermionic observables, and spin and energy density correlations (see Section~\ref{sec:planar} for the planar case and Section~\ref{sec:surface} for the extension to surfaces). The preferred language we use throughout the paper is that of \emph{Kac--Ward matrices} (see Section~\ref{sub:intro-KW-and-terminal}), and although it is hard to claim originality in view of the rich and overwhelming history and literature on the subject, we do give simple and general proofs of many results
for which we have not been able to find any explicit reference. A notable example is the famous Kac--Ward formula~\eqref{eqn:KWformula} for which we provide a very short proof in Section~\ref{sec:planar}. A motivation for discussing these combinatorial formulas in their full generality is the many open directions that still remain, including the Ising model in random media, spin glasses, the Ising model on random maps and non-integrable Ising models. Note that some progress was made on the last topic a few years ago~\cite{Giuliani}, where the energy density field of the near-critical Ising model with finite range interactions was shown to be universal in the limit, following a rigorous application of the renormalization group methods and taking advantage of the classical Grassmann variables representation of the model. {Another approach to reveal the Pfaffian structure of correlation functions arising in the limit of the (critical or near-critical) non-integrable Ising model was recently suggested in~\cite{Aizenmann-Duminil-Pfaff16}. It is based on the so-called random current representation of the model, which also has been the subject of renewed interest in the nearest-neighbor case, see~\cite{Camia-Lis,Lupu-Werner}.}

Similarly to the fact that all problems on random walks (whether classical, in random environment, on random graphs, etc.) have the same underlying structure of discrete harmonic functions, the structure underlying the 2D nearest-neighbor Ising model is that of \emph{s-holomorphic} functions, a definition introduced in~\cite{chelkak-smirnov} to encode a stronger version of discrete Cauchy-Riemann identities for some combinatorial observables arising in the model. Similar objects (discrete fermions satisfying some local relations aka propagation equations) go back to the founding papers on the subject, which use several different languages to describe the same structure. Despite the fact that all these languages are essentially equivalent to one another, we do not know of a reference providing an explicit exposition of the links between them (of course, it should be said that such links are part of the folklore surrounding the Ising model).
In view of the recent activity in the field, we believe it useful to provide such an exposition in one place (intended in particular for combinatorialists and probabilists) and thus devote Section~\ref{sec:links} to proving these equivalences, in particular the one between \emph{spin-disorders}~\cite{Kadanoff-Ceva} and \emph{Grassmann variables}~\cite{Lieb-fermions}, considered on double-covers.

In addition, we provide an extension of these combinatorial formulas to the \emph{double-Ising model}; see Section~\ref{sub:intro-dblI}. This model, defined as a pointwise product of two
Ising spin configurations on the same discrete domain, coupled along the boundary, is related to the \emph{bosonization} of the Ising model (e.g., see~\cite[Chapter 12]{YellowBook} {or~\cite[Section 12.4]{MussardoBook}}), a topic which has been revived and studied from the combinatorial point of view recently~\cite{Dub,BdT-xor,BdT4}.
The critical double-Ising model was also studied in the physics literature (see in particular~\cite{Ikhlef1,Ikhlef2,Santachiara}) in the context of the Ashkin--Teller model, a four-state spin model of which it is a special case. Similarly to the known relation between the scaling limit of interfaces arising in the Ising model at criticality~\cite{CDCHKS} and conformal loop ensembles~\cite{Sheffield-Werner}, Wilson~\cite{Wilson-xor} conjectured a relation between the interfaces arising in the double-Ising model at criticality (considered on the hexagonal lattice) and the set of ``level lines'' of the Gaussian Free Field~\cite{Schramm-Sheffield-GFFline,Wang-Wu}. In Section~\ref{sub:dblI-s-hol-fct} we discuss whether the combinatorial formulas could play a role in understanding this passage to the scaling limit as they do for the critical (single-) Ising model~\cite{CDCHKS}.

The paper is organized as follows. In the remainder of Section~\ref{sec:introduction}, we give a detailed presentation of our main results in the case of planar graphs.
In Section~\ref{sec:planar}, we prove the statements concerning the planar Ising model. In Section~\ref{sec:links}, we present an overview of the links between various formalisms that have been used in the study of the Ising model. In Section~\ref{sec:surface}, we generalize the results and proofs of Section~\ref{sec:planar} to the surface case. Finally, in Section~\ref{sec:double}, we prove the results concerning the double-Ising model.

\subsection{Partition function, combinatorial expansions, and embeddings} \label{sub:intro-Z-Ising}

Let $G$ be a finite connected graph with vertex set $V(G)$ and set of unoriented edges $E(G)$.
The Ising model on~$G$ is defined as follows.
A \emph{spin configuration} $\sigma$ is the assignment of a~$\pm 1$ spin to each vertex of the graph. For each (unoriented) edge $e=\{u,v\}\in E(G)$, let $J_e=J_{u,v}\in\R$ be an \emph{interaction constant} and {denote by $J$ the collection of all $J_e$.} Consider the \emph{Hamiltonian}
\begin{equation}\label{eqn:intro-H}
\textstyle H(\sigma)~=~-\sum_{\{u,v\}\in E(G)}J_{u,v} \,\sigma_u\sigma_v\,.
\end{equation}
For a fixed nonnegative real $\beta$, called the \emph{inverse temperature}, the Ising model is the probability distribution on spin configurations given by
\[
\P_{G}(\sigma)=\P_{G,\beta,\{J_e\}_{e\in E(G)}}(\sigma)~:=~[\cZ_{\beta}(G,J)]^{-1}\cdot\exp[-\beta H(\sigma)]\,,
\]
where the normalizing factor
\[
\textstyle \cZ_\beta(G,J)~=~\sum_{\sigma\in\{\pm 1\}^{V(G)}}\exp[-\beta H(\sigma)]
\]
is called the \emph{partition function} of the model.

It is convenient to introduce two polynomials encoding the combinatorial structure of the model. For that matter, we let $x=(x_e)_{e\in E(G)}$ be a collection of variables and view~$(G,x)$ as a weighted graph.
For any subset of edges $\rE\subset E(G)$, we define $x(\rE):=\prod_{e\in \rE}x_e$.

Let $\cE(G)$ be the set of all subgraphs with even vertex-degrees, called \emph{even subgraphs}.
The \emph{high-temperature polynomial} is defined to be
\begin{equation*}
\textstyle \cZ_{\opname{high}}(G,x):=\sum_{P\in \cE(G)}x(P)\,.
\end{equation*}
As first observed by van der Waerden~\cite{VdW}, the fact that the products of spins is always $\pm 1$ and some cancellations due to parity yield
\begin{equation}
\label{eqn:Z-Ising-high}
\textstyle \cZ_\beta(G,J)~ = ~(2^{|V(G)|}\prod_{e\in E(G)}\cosh[\beta J_e])\cdot \cZ_{\opname{high}}(G,(x_e\!:=\!\tanh[\beta J_e])_{e\in E(G)})\,.
\end{equation}

The \emph{low-temperature}, or \emph{domain-walls}, expansion is another useful polynomial expansion which has a straightforward interpretation in terms of spin configurations on the \emph{dual} graph to~$G$. It requires the choice of an embedding of~$G$ into a surface $\Sigma$ possibly with boundary ({given by a disjoint union of topological circles}); the embedding is such that each of the components of $\Sigma\setminus G$ is a topological disk. We let $G^*$ be the dual graph of $G$ with respect to the surface $\Sigma$ to which we glue a topological disk to each boundary component: this ensures that there is one vertex in~$G^*$ per boundary component of~$G$, we denote the set of those by~$V_{\opname{out}}(G^*)$. For any edge $e\in E(G)$ we write $e^*$ for its dual edge. If the graph is planar, we shall write $u_{\opname{out}}$ for the unique element of~$V_{\opname{out}}(G^*)$ corresponding to the unbounded face of~$G$.

We consider an Ising model on $G^*$. The specification of {boundary conditions} for this model consists in assigning a fixed value to each of the spins at vertices from~$V_{\opname{out}}(G^*)$. In particular~`$+$' \emph{boundary conditions} are obtained by fixing all of these values to~$+1$. The set of spin configurations on~$G^*$ with~`$+$' boundary conditions, is in bijection with the set of {\em domain walls\/} between clusters of~$+1$'s and~$-1$'s, i.e. the set
\[
\cE_0(G)~:=~\{P\in\cE(G)\,:\,[P]=0\in H_1(\Sigma;\Z_2)\}
\]
of even subgraphs of~$G$ that bound a collection of faces. (Although we do not write it explicitly in the notation, $\cE_0(G)$ implicitly depends on the embedding of $G$ in~$\Sigma$.) Given a spin configuration $\sigma$ on $G^*$ with `$+$' boundary conditions, we let $P(\sigma)\in\cE_0(G)$ be the even subgraph representing the domain walls of $\sigma$.
Let
\[
\textstyle\cZ_{\beta^*}^+(G^*,J)~=~\sum_{\sigma\in\{\pm 1\}^{V(G^*)}:~
\sigma_u=+1~\text{for~all}~u\in V_{\opname{out}}(G^*)}\,\exp[-\beta^* H^*(\sigma)]
\]
be the partition function of the Ising model on $G^*$ with~`$+$' boundary conditions and the inverse temperature~$\beta^*$, where the Hamiltonian~$H^*(\sigma)$ on~$G^*$ is defined similarly to~(\ref{eqn:intro-H}) via interaction constants~$J_{e^*}$. By defining the \emph{low-temperature polynomial} to be
\[
\textstyle \cZ_{\opname{low},\Sigma}(G,x)~:=~\sum_{P\in \cE_0(G)}x(P)\,,
\]
one readily has
\begin{equation}
\label{eqn:Z-Ising-low}
\textstyle \cZ_{\beta^*}^+(G^*,J)~=~ (\prod_{e^*\in E(G^*)}\exp[\beta^* J_{e^*}])\cdot \cZ_{\opname{low},\Sigma}(G,(x_e\!:=\!\exp[-2\beta^*J_{e^*}])_{e\in E(G)})\,.
\end{equation}

For planar graphs (i.e. when~$\Sigma=\C$ is the plane), we have~$\cE_0(G)=\cE(G)$ and simply denote
\[
\cZ_{\opname{Ising}}(G,x)~:=~\cZ_{\opname{low},\C}(G,x)=\cZ_{\opname{high}}(G,x)\,.
\]
Note that this equality relates the partition function of an Ising model on a planar graph~$G$ and another one on~$G^*$ provided the interaction constants and inverse temperatures satisfy, for each edge $e\in E(G)$ and its dual~$e^*\in E(G^*)$, the relation~$\tanh[\beta J_e]=\exp[-2\beta^*J_{e^*}]$, which can be rewritten in a symmetric way as
\[
\sinh[2\beta J_e]\sinh[2\beta^* J_{e^*}]=1\,.
\]
This is the Kramers-Wannier duality~\cite{Kramers-Wannier} and it has an extension to surface graphs~\cite{cimasoni-AIHP}.

One can also consider \emph{`free' boundary conditions} on (some of) the boundary components of~$\Sigma$ instead of~`$+$' ones. This can be obtained by setting the corresponding interaction parameters~$x_e=\exp[-2\beta J_{e^*}]$ to~$1$ in the right-hand side of~\eqref{eqn:Z-Ising-low} and modifying the prefactor accordingly. In particular, all results we present below for `$+$' boundary conditions can be easily generalized to `free' ones and we shall not comment about this further in the text.

We now briefly explain how the 2D Ising model is naturally associated to the topological notions of double covers and spin structures, see Sections~\ref{sub:intro-spin-correlations},~\ref{sub:grassmann},~\ref{sub:disorders} and~\ref{sec:surface} for more details.
Consider the Ising model on a planar graph~$G^*$ and fix some faces~$u_1,\dots,u_m$ of~$G$. In order to compute \emph{spin correlations}~$\mathbb{E}^+_{G^*}[\sigma_{u_1}\dots\sigma_{u_m}]$, one may take advantage of the domain walls expansion and twist the weights~$x_e$ by changing their signs on cuts linking~$u_1,\dots,u_m$ and~$u_{\opname{out}}$ in such a way that all configurations are weighted by~$\sigma_{u_1}\dots\sigma_{u_m}$. This gives a polynomial~$\cZ_{[u_1,..,u_m]}(G,x)$ such that~$\mathbb{E}^+_{G^*}[\sigma_{u_1}\dots\sigma_{u_m}]=\cZ_{[u_1,\ldots,u_m]}(G,x)/\cZ_{\opname{Ising}}(G,x)$ and the underlying topological structure is that of a canonical \emph{double cover} of~$G$ branching over all the faces~$u_1,\dots,u_m$. For planar graphs, this leads to a representation of spin correlations as ratios of two Pfaffians due to the well-known integrability of the 2D Ising model. There is another way to treat spin correlations: consider a punctured plane $\C\setminus\{u_1,\dots,u_m\}$ and note that the probability of $\sigma_{u_j}$ being $+1$ for all $j$ is simply the ratio~${\cZ_{\opname{low},\C\setminus\{u_1,..,u_m\}}(G,x)}\,/\,{\cZ_{\opname{Ising}}(G,x)}$. Therefore,~$\mathbb{E}^+_{G^*}[\sigma_{u_1}\dots\sigma_{u_m}]$ can be written as a linear combination of such ratios and, vice versa,~${\cZ_{\opname{low},\C\setminus\{u_1,..,u_m\}}(G,x)}$ is a linear combination of~$2^m$ Pfaffians corresponding to all the possible double covers of the punctured plane~$\Sigma=\C\setminus\{u_1,\dots,u_m\}$, which are classified by~$\Hom(\pi_1(\Sigma),\Z_2)=H^1(\Sigma;\Z_2)$. When one works with graphs embedded in a general surface~$\Sigma$, a similar phenomenology comes into play: the polynomial~$\cZ_{\opname{low},\Sigma}(G,x)$ is equal to a sum of several Pfaffians but one needs a clever topological tool to index them, the so-called \emph{spin structures}~\cite{Mercat-CMP,Cim2}, which form an affine space over~$H^1(\Sigma;\Z_2)$; see Section~\ref{sec:surface} for details.

Throughout the introduction, Section~\ref{sec:planar} and Section~\ref{sec:links} we assume that the finite weighted graph~$(G,x)$ is embedded in the plane, with edges given by straight line segments. However, in Section~\ref{sec:surface}, we show that most of these results (and proofs) extend to the general case of finite weighted graphs embedded in surfaces. The main additional tool needed is the notion of spin structures mentioned above.

\subsection{The Kac--Ward matrix and the terminal graph} \label{sub:intro-KW-and-terminal}
Let~$\EE(G)$ be the set of {\em oriented\/} edges of~$G$. We shall denote by~$o(e)\in V(G)$ the origin of~$e\in\EE(G)$, by~$t(e)\in V(G)$ its terminal vertex, by~$\overline{e}\in\EE(G)$ the oriented edge {with the same support as $e$ but the opposite orientation}, and extend~$x$ to a symmetric {(under change of orientation)} function on~$\EE(G)$.
Given two oriented edges~$e,e'$ such that~$t(e)=o(e')$ and~$e'\neq\overline{e}$, one can consider the oriented angle~$\rw(e,e')\in (-\pi,\pi)$ between~$e$ and~$e'$, see Fig.~\ref{Fig:Wind}.

The {\em Kac--Ward matrix\/} associated to the weighted graph~$(G,x)$ is the~$|{\EE}(G)|\times|{\EE}(G)|$ matrix
\[
\KW(G,x):=\rI-\rT\,,
\]
where~$\rI$ is the identity matrix and~$\rT$ is defined by
\begin{equation}
\label{eqn:T-def}
\rT_{e,e'}=\begin{cases}
\exp[\frac{i}{2}\rw(e,e')]\cdot(x_ex_{e'})^{1/2}& \text{if~$t(e)=o(e')$ but~$e'\neq \bar{e}$;} \\
0 & \text{otherwise.}
\end{cases}
\end{equation}
The famous \emph{Kac--Ward formula} \cite{Kac-Ward} claims that
\begin{equation}
\label{eqn:KWformula}
\det[\KW(G,x)]=[\cZ_{\opname{Ising}}(G,x)]^2,
\end{equation}
and it was an intricate story~\cite{Sherman,Vdovichenko,DZMSS,Kager-Lis-Meester,Helmuth} to give a fully rigorous proof of this identity for general planar graphs; see a recent paper~\cite{Lis-2015} by Lis for a streamlined version of the classical approach to~(\ref{eqn:KWformula}). This formula was generalized to graphs embedded in surfaces in~\cite{Cim2}. Note that the classical Kac--Ward matrix~$\KW(G,x)$ is neither Hermitian nor anti-symmetric but there is a simple {transformation} revealing these symmetries. Indeed, denote
\begin{equation}
\label{eqn:K-def}
\rK=\rK(G,x):=\rJ\cdot\KW(G,x),\quad\text{where}\quad \rJ_{e,e'}=\delta_{\overline{e},e'}
\end{equation}
(this multiplication by~$\rJ$ barely changes the determinant of~$\KW(G,x)$, which simply gets multiplied by~$\det\rJ=(-1)^{|E(G)|}$). Then,
\begin{equation}
\label{eqn:K-entries}
\rK_{e,e'}=\begin{cases}
1 &\text{if~$e'=\overline{e}$;}\\
-\exp[\frac{i}{2}\rw(\overline{e},e')]\cdot(x_ex_{e'})^{1/2}& \text{if~$o(e)=o(e')$ but~$e'\neq e$;} \\
0 & \text{otherwise;}
\end{cases}
\end{equation}
and it is easy to see that $\rK=\rK^*$. Furthermore, for each oriented edge~$e\in\EE(G)$, {let us fix a \emph{square root} of the direction of the straight segment representing~$e$ on the plane and denote by~$\eta_e$ its \emph{complex conjugate} multiplied by a fixed unimodular factor $\zeta$. The latter factor plays no role in the most part of the paper (and hence the reader can simply think of $\zeta=1$) except Sections~\ref{sub:s-observables} and~\ref{sub:dblI-s-hol-fct} where it is convenient to use the value~$\zeta=e^{i\frac{\pi}{4}}$.} Denote by~$\rU$ the diagonal matrix with coefficients~$\{\eta_e\}_{e\in\EE(G)}$, and set
\begin{equation}
\label{eqn:hat-K-def}
\wh{\rK}:={i\rU^*\rK\rU^{\vphantom{*}}}\,.
\end{equation}
Because of {the above} choice of square roots, the matrix $\wh{\rK}$ is not canonical, whereas $\rK$ is (given the embedded graph).
It is easy to see that the matrix $\wh{\rK}$ is anti-Hermitian with \emph{real} entries, and thus \emph{anti-symmetric}. Moreover, it has the same determinant as the original Kac--Ward matrix~$\KW(G,x)$. The last simple observation is that $\wh{\rK}$ can be thought of as a weighted adjacency matrix of the \emph{terminal graph}~$G^\rK$ which was introduced by Kasteleyn~\cite[Section~V]{Kasteleyn-63} and initially called the ``cluster lattice''. Let us now recall the definition of this graph.

\begin{figure}
\captionsetup[subfigure]{position=below,justification=justified,singlelinecheck=false,labelfont=bf}
\begin{subfigure}[t]{.48\textwidth}
\labellist\small\hair 2.5pt
\pinlabel {$e$} at 46 125
\pinlabel {$\rw(e,e')$} at 310 105
\pinlabel {$e'$} at 300 210
\endlabellist
\centering{\psfig{file=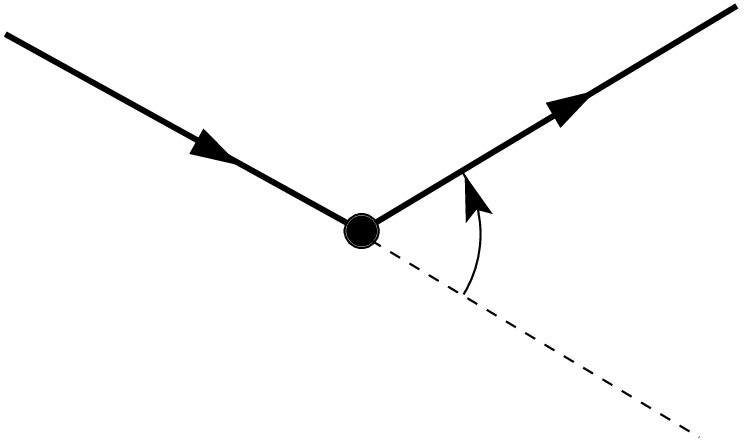,width=.7\linewidth}}
\caption{The angle $\rw(e,e')$ for two edges $e,e'$ satisfying $t(e)=o(e')$}
\label{Fig:Wind}
\end{subfigure}
\hfill
\begin{subfigure}[t]{.48\textwidth}
\labellist\small\hair 2.5pt
\pinlabel {$x_1$} at 237 162
\pinlabel {$x_2$} at 130 262
\pinlabel {$x_d$} at 185 14
\pinlabel {$1$} at 670 140
\pinlabel {$1$} at 520 262
\pinlabel {$1$} at 585 14
\pinlabel {$(x_1x_2)^{1/2}$} at 645 195
\pinlabel {$(x_dx_1)^{1/2}$} at 635 90
\endlabellist
\centering{\psfig{file=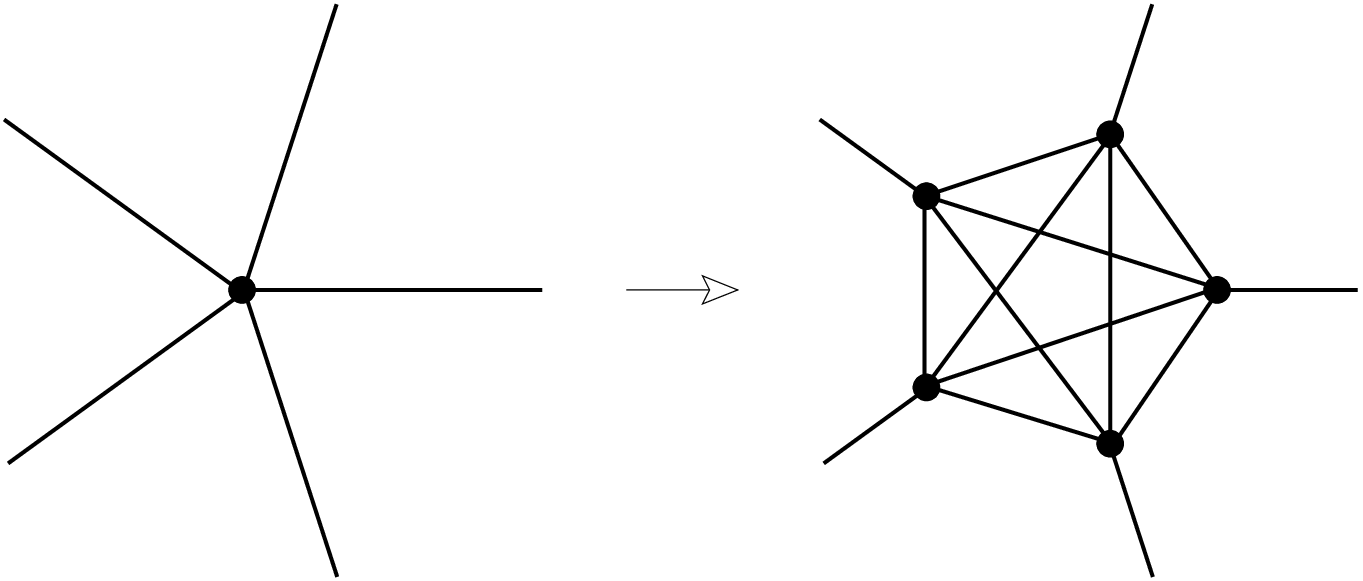,width=\linewidth}}
\caption{The local procedure used to construct the terminal graph $G^\rK$ from $G$}
\label{Fig:GK}
\end{subfigure}
\caption{}
\end{figure}

Given a weighted graph~$(G,x)$, its terminal graph $G^\rK$ is obtained by inserting at each vertex~$v$ of~$G$ (of degree~$d(v)$) a clique~$K_{d(v)}$, as illustrated in Fig.~\ref{Fig:GK}. We shall say that an edge of~$G^\rK$ is {\em short\/} if it is part of one of these complete subgraphs, and {\em long\/} otherwise (i.e. if it comes from an edge of~$G$). Given edge weights~$x=(x_e)_{e\in E}$ on~$G$, we shall denote by~$x^\rK$ the edge weights on~$G^\rK$ obtained by assigning weight~$1$ to all long edges and weight~$(x_ex_{e'})^{1/2}$ to the short edge corresponding to the two adjacent edges~$e,e'\in \EE(G)$. Note that the mapping of an oriented edge of~$G$ to the origin vertex of the corresponding long edge in~$G^\rK$ yields a natural bijection between the sets~$\EE(G)$ and~$V(G^\rK)$, which we use to identify them. Note that the terminal graph is in general neither planar nor bipartite.

Surprisingly enough, this, almost trivial, link between the two combinatorial techniques: expansions of the Kac--Ward determinant and the study of dimers on the terminal graph, seems to have remained almost unknown to date. It is even more astonishing that some version of the above reduction of~$\KW(G,x)$ to~$\wh\rK$ was known as early as~1960 to Hurst and Green, who worked with the translationally invariant Ising model on the square lattice and mentioned it to claim the first complete derivation of the Kac--Ward formula in this particular case~\cite[p.~1062]{Hurst-Green}.

\subsection{Pfaffian formulas for the partition function and combinatorial observables} \label{sub:intro-Pfaffian}

Recall that~$\wh{\rK}$ is a real anti-symmetric matrix obtained from the Kac--Ward matrix~$\KW(G,x)$ via~\eqref{eqn:K-def} and~\eqref{eqn:hat-K-def}. In particular,
\[
\det[\KW(G,x)]~=~\det \wh \rK~=~(\Pf{\wh\rK})^2\,.
\]

\begin{theorem}
\label{thm:KW1} For any planar weighted graph~$(G,x)$, one has
\[
\cZ_{\opname{Ising}}(G,x)=\pm \Pf{\wh{\rK}}\,,
\]
where the sign in the right-hand side is fixed by the condition that the constant (in $x$) term equals $+1$. As a consequence, the Kac--Ward formula~\eqref{eqn:KWformula} holds for any planar graph~$(G,x)$.
\end{theorem}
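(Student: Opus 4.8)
The plan is to expand the Pfaffian of $\wh\rK$ directly and match it term-by-term with the combinatorial sum $\cZ_{\opname{Ising}}(G,x)=\sum_{P\in\cE(G)}x(P)$. Recall that $\wh\rK$ is a real anti-symmetric matrix indexed by $\EE(G)\cong V(G^\rK)$, and, up to the gauge factors $\eta_e$, it is a weighted adjacency matrix of the terminal graph $G^\rK$: its off-diagonal entries come in two types, the ``long'' entries $\rK_{e,\bar e}=1$ (weight $1$, connecting $e$ to $\bar e$) and the ``short'' entries $-\exp[\tfrac i2\rw(\bar e,e')](x_ex_{e'})^{1/2}$ for $o(e)=o(e')$, $e'\neq e$. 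By the standard formula, $\Pf{\wh\rK}=\sum_{M}\sigma(M)\prod_{\{e,f\}\in M}\wh\rK_{e,f}$, where $M$ runs over perfect matchings of the vertex set $\EE(G)$ and $\sigma(M)=\pm1$ is the signature of the associated pairing. So the first step is to interpret each perfect matching of $\EE(G)$ combinatorially: a matching uses some long edges and some short edges, and the short edges used at a given vertex $v$ of $G$ form a perfect matching of the ``unpaired-by-long'' oriented edges at $v$.

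Second, I would organize the sum over matchings $M$ by the set of long edges it contains. Writing $P\subset E(G)$ for the set of (unoriented) edges $e$ such that \emph{neither} $e$ nor $\bar e$ is matched by a long edge of $M$ --- equivalently, the edges of $G$ that must be ``covered by short edges'' --- one checks that $P$ is automatically an even subgraph: at each vertex $v$, the oriented edges at $v$ that are not long-matched are exactly $2\cdot\#\{\text{half-edges of }P\text{ at }v\}$ in number (each $e\in P$ at $v$ contributes the outgoing $e$, and each $e\in P$ pointing into $v$ contributes $\bar e$), and this must be even for a short matching to exist. Conversely, every $P\in\cE(G)$ arises, and the long edges outside $P$ contribute a factor $\prod 1=1$ while the $\eta_e$ factors on the two endpoints of each long edge cancel ($\eta_e\eta_{\bar e}$ times the sign is fixed). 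Thus $\Pf{\wh\rK}=\sum_{P\in\cE(G)}c_P\, x(P)$ for some coefficients $c_P$, where the weight $x(P)$ emerges because each short edge at $v$ corresponding to $e,e'$ carries $(x_ex_{e'})^{1/2}$ and, as one runs around the cycles of $P$, these half-weights multiply up to exactly $\prod_{e\in P}x_e$.

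The heart of the matter --- and the step I expect to be the main obstacle --- is showing $c_P=+1$ for every $P\in\cE(G)$. The coefficient $c_P$ is a product over the vertices of $G$ (coming from the choice of short matching at each $v$, there may be several when $\deg_P(v)\geq 4$) and over the cycles of $P$ of: the matching signature $\sigma(M)$, the $\eta$-phases, and the winding phases $\exp[\tfrac i2\rw(\bar e,e')]$. For a single vertex $v$ with its incident $P$-half-edges, the local contributions of the short edges, together with the local part of the signature, should sum to $1$; and for each cycle of $P$ traversed once, the accumulated winding $\sum\tfrac12\rw$ around a simple closed polygon in the plane equals $\pm\pi$ (turning number $\pm1$), giving $\exp[\pm i\pi]=-1$, which is then cancelled by the sign coming from how the cycle's long edges permute under $M$ --- this is exactly the classical ``each closed loop contributes $-1$'' cancellation underlying the Kac--Ward formula. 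I would prove the vertex-local identity by a small computation (or by invoking the known evaluation of the Pfaffian of the ``$n$-star'' Kac--Ward block, which also appears in the treatment of Fisher's correspondence), and the loop identity by Whitney's formula / the Gauss--Bonnet count for polygonal curves. The sign normalization in the statement (constant term $+1$) then just pins down the global sign of $\Pf{\wh\rK}$ versus $+\cZ_{\opname{Ising}}$, since the constant term of $\cZ_{\opname{Ising}}$ (the empty subgraph $P=\emptyset$) is $1$ and corresponds to the all-long matching with $c_\emptyset=1$.

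Finally, the Kac--Ward formula follows immediately: $\det[\KW(G,x)]=\det\wh\rK=(\Pf{\wh\rK})^2=[\cZ_{\opname{Ising}}(G,x)]^2$, using $\det[\KW(G,x)]=\det\wh\rK$ established earlier via $\rK=\rJ\cdot\KW$ and $\wh\rK=i\rU\rK\rU^*$ together with $\det\rJ=(-1)^{|E(G)|}$ and $|\det(i\rU)|$-type factors squaring away.
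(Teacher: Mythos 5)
Your overall strategy is the same as the paper's: expand $\Pf{\wh{\rK}}$ over perfect matchings of $V(G^\rK)$, identify these with dimer configurations on the terminal graph, group them according to the even subgraph $P$ determined by the long dimers, and control the signs via Whitney's formula together with a local computation at the cliques. The weight bookkeeping (each $e\in P$ picking up $x_e^{1/2}$ at each endpoint) and the final deduction $\det\KW=\det\wh{\rK}=(\Pf{\wh{\rK}})^2$ are also exactly as in the paper.

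The step that would fail as stated is your evaluation of $c_P$. You propose to factor $c_P$ as a product of vertex-local sums (each equal to $1$) times a product over ``the cycles of $P$'' (each contributing $(-1)\cdot(-1)=1$). This factorization is not available: when $\deg_P(v)\ge 4$ the decomposition of $P$ into loops is only defined \emph{after} one chooses the short matching at $v$, and the phases $\exp[\frac{i}{2}\rw(\overline{e},e')]$ sitting at $v$ feed into the total winding of whichever loops that choice produces, so they cannot be summed out locally. Indeed, already at a vertex with $\deg_P(v)=2$ the unique local contribution is a nontrivial angle-dependent phase, not $1$, and at a degree-$4$ vertex the three local matchings contribute three different complex numbers whose sum is not $1$. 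The correct order of operations is the reverse of yours: first fix an entire dimer configuration $D\in\rho^{-1}(P)$, observe that $D\btu D_0$ (with $D_0$ the all-long reference matching) is a disjoint union of cycles \emph{alternating} long and short edges, and apply Whitney's formula to each such cycle to convert all phases and signature factors into the single crossing sign, $\veps(D)=(-1)^{\rt(D)}\veps(D_0)$; here one also needs that distinct closed curves in the plane cross an even number of times, so that $\prod_j(-1)^{\rt(C_j)}=(-1)^{\rt(D)}$. Only then does one sum over the local choices, and the identity that survives at each clique is the purely combinatorial one $\sum_{D_v\in\cD(K_{2n})}(-1)^{\rt(D_v)}=1$, proved by an involution exchanging two fixed adjacent clique vertices (which flips the crossing parity off its fixed-point set). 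Your proposed ``$n$-star Pfaffian'' shortcut is a red herring: the local identity needed is about crossing parities of chord diagrams, not about the entries of $\wh{\rK}$, and it only becomes usable after the Whitney reduction has been performed globally for each fixed $D$.
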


\begin{remark}
On the square lattice, this theorem provides a standard way to express the partition function of the Ising model in terms of the so-called Grassmann variables, see further details in Section~\ref{sub:grassmann}. For this particular case, its proof can be found in many textbooks but not in connection with the Kac--Ward formula: the matrix~$\wh\rK$, corresponding to some particular choices of~$\eta_e$ for oriented edges of four different types, is introduced \emph{per~se}, as a clever tool to count the \emph{signed} partition function of dimers (aka perfect matchings) on the corresponding \emph{non-planar} terminal lattice. (Such a partition function can be easily seen to be equal to~$\cZ_{\opname{Ising}}(G,x)$, see Section~\ref{sub:prelim}.) To the best of our knowledge, it does not appear in the literature in the full generality of finite planar weighted graphs. However, its key ingredients were known to Kasteleyn and Fisher in the~1960s though presumably not in connection with the Kac--Ward matrix and the induced orientations of~$G^\rK$; see~\cite[Section~V]{Kasteleyn-63} and~\cite[Section~1]{Fisher-66} and note that the descriptions of configurations $P\in\cE(G)$ via dimers on~$G^\rK$ used in~\cite{Kasteleyn-63} and~\cite{Fisher-66} differ from one another.
\end{remark}

It is well-known~\cite{Lis-2013,cimasoni-AIHP} that the \emph{entries} of the inverse Kac--Ward matrix can be represented as the \emph{two-point combinatorial observables} proposed by Smirnov~\cite{smirnov-icm-2006,Smirnov-10} as a convenient tool to study the scaling limit of the critical Ising model in arbitrary planar domains. Since then, these observables are usually defined in a self-contained way, but it is worth noting that their definition grew from considerations made by Smirnov jointly with Kenyon~\cite[Remark~4]{smirnov-icm-2010} on dimer techniques applied to the so-called Fisher graph; see more details in Section~\ref{sec:links}.

The next theorem, Theorem~\ref{thm:multipoint}, extends this combinatorial interpretation to the \emph{Pfaffian minors} (that is Pfaffian of square submatrices) of the inverse matrix~$\wh\rK^{-1}$, which correspond to the~\emph{$2n$-point observables}. The latter were recently used~\cite{Hongler-thesis,Chelkak-Izyurov,izyurov-mult} in the context of the critical Ising model but, to the best of our knowledge, Theorem~\ref{thm:multipoint} does not appear in the literature in this generality (e.g., the proofs given in~\cite{Hongler-thesis,Chelkak-Izyurov,izyurov-mult} rely upon some particular feature of the model at criticality, see also a discussion in~\cite[Section~4.5]{hongler-kytola-zahabi}). This expansion is also important to justify the link between the two classical formalisms developed to study the 2D Ising model: Grassmann variables and disorder insertions; see further details in Section~\ref{sub:equivalence}.

We need some notation. Let~$(G^\diamondsuit,x)$ be the weighted graph obtained from~$(G,x)$ by adding a vertex~$z_e$ in the middle of each edge~$e$ of~$G$,
and by assigning the weight~${x_e}^{1/2}$ to both resulting edges of~$G^\diamondsuit$. Given a collection~$\rE=\{e_1,\dots,e_{2n}\}$ of oriented edges of~$G$,
let~$\mathcal{C}(e_1,\dots,e_{2n})$ denote the set of subgraphs~$P$ of~$G^\diamondsuit$ that do \emph{not} contain the
edges~$(o(e_k),z_{e_k})$, \emph{do} contain the edges~$(z_{e_k},t(e_k))$ provided~$\overline{e}_k\not\in\rE$, and such that each vertex of~$G^\diamondsuit$ different from~$z_{e_1},\dots,z_{e_{2n}}$ has an even degree in~$P$.
Note that~$\cC(\emptyset)$ is nothing but the set~$\cE(G)$ of even subgraphs of~$G$.

To each configuration~$P\in\mathcal{C}(e_1,\dots,e_{2n})$, we shall now assign a sign~$\tau(P)\in\{\pm 1\}$. In order to do so, we resolve all its crossings (vertices with degree more than~$2$ in~$P$) to obtain a decomposition~$P=C\sqcup\gamma_1\sqcup\dots\sqcup\gamma_n$, where~$C$ is a collection of disjoint simple loops, and~$\gamma_1,\dots,\gamma_n$ are simple paths matching the half-edges~$(z_{e_1},t(e_1)),\dots,(z_{e_{2n}},t(e_{2n}))$; in case there are pairs~$e,\overline{e}$ in~$\rE$, we declare the corresponding~$\gamma_k$ to be empty paths formally matching such pairs. Let us choose arbitrary orientations of the paths~$\gamma_k$ and denote by~$s$ a permutation of~$\{1,\dots,2n\}$ such that each of~$\gamma_k$ goes from~$z_{e_{s(2k-1)}}$ to~$z_{e_{s(2k\vphantom{1})}}$. Following~\cite{Hongler-thesis}, we set
\begin{equation}
\label{eqn:tau-def}
\tau(P)~:=~{\sign(s)\,\cdot}\prod_{k=1}^n \left({i{\eta}_{e_{s(2k-1)}}^{\vphantom{|}}\overline{\eta}_{e_{s(2k\vphantom{1})}}} \exp[-{\textstyle\frac{i}{2}}\wind(\gamma_k)]\right),
\end{equation}
where~$\wind(\gamma_k)$ denotes the total rotation angle of the velocity vector of~$\gamma_k$ when it runs from~$z_{e_{s(2k-1)}}$ to~$z_{e_{s(2k\vphantom{1})}}$; we formally set~$\wind(\gamma_k):=0$ in case $e_{s(2k-1)}=\overline{e}_{s(2k\vphantom{1})}$ and so~$\gamma_k=\emptyset$.

The sign~$\tau(P)$ is obviously independent of the numbering of the paths~$\gamma_k$ and one can easily see that it also does not depend on their orientations. Moreover, one can check that it is independent of the smoothing of~$P$, i.e. the way how~$P$ is split into~$C$ and~$\gamma_k$, see identity~\eqref{eqn:claim-tau} in Section~\ref{sec:planar} for more comments. We are now able to formulate the next result.

\begin{theorem}
\label{thm:multipoint}
For any planar weighted graph~$(G,x)$ and any set of oriented edges~$e_1,\dots,e_{2n}$, the following combinatorial expansion is fulfilled:
\[
\Pf{\wh{\rK}^{-1}_{e_j,e_k}}_{j,k=1}^{2n} ~=~ [\cZ_{\opname{Ising}}(G,x)]^{-1}\cdot\!\!\! \sum_{P\in\mathcal{C}(e_1,\dots,e_{2n})}\tau(P)\,x(P)\,.
\]
\end{theorem}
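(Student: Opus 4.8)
The plan is to reduce the statement to a purely algebraic identity about Pfaffian minors of the inverse of the antisymmetric matrix $\wh{\rK}$, and then evaluate that identity combinatorially by expanding $\wh{\rK}^{-1}$ in terms of cofactors. The starting point is the classical formula expressing an entry of the inverse of an antisymmetric matrix $A$ as $A^{-1}_{jk}=(-1)^{j+k}\Pf{A_{\hat{j}\hat{k}}}/\Pf{A}$, where $A_{\hat{j}\hat{k}}$ denotes $A$ with rows and columns $j,k$ removed, and more generally the formula expressing a Pfaffian minor $\Pf{A^{-1}_{e_j,e_k}}_{j,k=1}^{2n}$ as $\pm\Pf{A_{\widehat{e_1},\dots,\widehat{e_{2n}}}}/\Pf{A}$, i.e.\ (up to sign) the Pfaffian of the complementary submatrix divided by $\Pf{A}$. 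By Theorem~\ref{thm:KW1} the denominator $\Pf{\wh{\rK}}$ equals (up to a global sign) $\cZ_{\opname{Ising}}(G,x)$, so it remains to identify $\pm\Pf{\wh{\rK}_{\widehat{e_1},\dots,\widehat{e_{2n}}}}$ with $\sum_{P\in\mathcal{C}(e_1,\dots,e_{2n})}\tau(P)x(P)$.

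The key observation is that $\wh{\rK}$ is (up to the diagonal twist $\rU$ and the factor $i$) the weighted adjacency matrix of the terminal graph $G^\rK$, with $V(G^\rK)\cong\EE(G)$. Deleting the $2n$ rows and columns $e_1,\dots,e_{2n}$ corresponds to deleting the $2n$ vertices $e_1,\dots,e_{2n}$ from $G^\rK$; thus $\Pf{\wh{\rK}_{\widehat{e_1},\dots,\widehat{e_{2n}}}}$ is (by the Pfaffian–dimer correspondence, exactly as in the proof of Theorem~\ref{thm:KW1}) a signed count of dimer configurations on $G^\rK\setminus\{e_1,\dots,e_{2n}\}$. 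Now I would set up the bijection, already used implicitly in the proof of Theorem~\ref{thm:KW1}, between dimer configurations on the (punctured) terminal graph and subgraph configurations of $G^\diamondsuit$: a dimer covering of $G^\rK$ projects to an even subgraph of $G$ (with prescribed local rules at each clique $K_{d(v)}$), and removing the vertex $e_k$ of $G^\rK$ forces the corresponding long edge to be unmatched at its $G^\rK$-endpoint, which on the $G^\diamondsuit$ side translates precisely into the constraint defining $\mathcal{C}(e_1,\dots,e_{2n})$: the half-edge $(o(e_k),z_{e_k})$ is absent, the half-edge $(z_{e_k},t(e_k))$ is present (unless $\overline{e}_k$ is also marked), and $z_{e_k}$ is the unique vertex allowed odd degree. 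Tracking the weight under this bijection gives $x(P)$, so the only thing left is the sign.

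The main obstacle, and the technical heart of the argument, is the sign bookkeeping: one must show that the product of (i)~the Kasteleyn/Pfaffian sign attached to a dimer configuration on the punctured terminal graph, (ii)~the sign $(-1)^{\sum(e_j+e_k)}$ or permutation sign coming from the Pfaffian-minor formula, and (iii)~the $\eta_e$ and $i$ factors hidden in the passage $\wh{\rK}=i\rU\rK\rU^*$, collapses exactly to $\tau(P)$ as defined in~\eqref{eqn:tau-def}. Here the winding term $\wind(\gamma_k)$ enters through the $\exp[\frac{i}{2}\rw]$ phases along the path $\gamma_k$ telescoping into a total turning, precisely as the loop weights $\exp[\frac{i}{2}\wind]$ appeared in the proof of the Kac--Ward formula via Whitney's formula; the new feature is that $\gamma_k$ is an open path, so the endpoint contributions do not cancel but instead produce the boundary factors $i\overline{\eta}_{e_{s(2k-1)}}\eta_{e_{s(2k)}}$, while $\sign(s)$ absorbs the reordering freedom in pairing up the $2n$ marked vertices. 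I would verify this first in the baseline case $n=0$ (which is just Theorem~\ref{thm:KW1} together with the identity $\det\rJ=(-1)^{|E(G)|}$), then do $n=1$ to isolate the open-path endpoint computation, and finally check that adding a further pair $e_{2n-1},e_{2n}$ multiplies both sides consistently — the well-definedness of $\tau(P)$ (independence of the smoothing of $P$, identity~\eqref{eqn:claim-tau}) being exactly what guarantees the combinatorial side is unambiguous. The degenerate subcase where $e$ and $\overline{e}$ are both marked, giving an empty path $\gamma_k$ with $\wind=0$, should be handled separately but is forced by the conventions already fixed in the statement.
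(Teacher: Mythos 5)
Your reduction is exactly the one the paper uses: the Pfaffian-minor identity $\Pf{\wh{\rK}^{-1}_{e_j,e_k}}_{j,k=1}^{2n}=\pm\,\Pf{\wh{\rK}_{e,e'}}_{e,e'\notin\rE}\,/\,\Pf{\wh{\rK}}$, the identification of the complementary Pfaffian with a signed dimer count on the punctured terminal graph $G^\rK_\rE$, and the bijection between those dimer configurations and $\cC(e_1,\dots,e_{2n})$ (the paper's straightforward extension of Lemma~\ref{lemma:terminal}), with Whitney's formula converting accumulated $\exp[\frac{i}{2}\rw]$ phases into windings. So the skeleton is the paper's proof.

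The gap is in the sign verification, which you rightly call the heart of the matter but whose proposed execution would not go through as stated. The paper compares $D\in\cD(G^\rK_\rE)$ with the reference matching $D_0$ of long edges: $D\btu D_0$ decomposes into closed cycles $C_j$ and open paths $\gamma_k$ pairing the marked vertices, and what must be proved is the single identity
\[
\sign(s)\cdot\prod_{k=1}^n\tau(\gamma_k)\cdot(-1)^{\rt(D)-\sum_{j}\rt(C_j)}~=~\tau(\rho_\rE(D))
\]
(Claim~C, identity~\eqref{eqn:claim-tau}). This is irreducibly \emph{global}: the permutation signs $\sign(s)$ and $\sign(\nu_\rE)$ and the crossing count $\rt(D)$ couple all $n$ pairs, because the paths $\gamma_k$ may cross one another and the cycles $C_j$ inside the cliques. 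Your plan of "do $n=1$, then check that adding a further pair multiplies both sides consistently" tacitly assumes the sign factorizes over pairs; it does not, and closing the induction would require exactly the mod-2 intersection bookkeeping the paper assembles in Claim~C (pushing $\gamma$ and $C$ to one side, comparing against a reference non-crossing smoothing $\gamma^0\sqcup C^0$, and the identities $(\gamma^+\!\sqcup C^+)\cdot(\gamma\sqcup C)\equiv 0$ and $\gamma^+\!\cdot C^+\equiv\gamma^+\!\cdot\gamma^0 \pmod 2$). Relatedly, you take the well-definedness of $\tau(P)$ (independence of the smoothing) as given, but in the paper this is not an independent input: it is a consequence of the same Claim~C, since different $D\in\rho_\rE^{-1}(P)$ produce different, possibly crossing, smoothings of $P$. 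Without that computation the argument is a correct outline rather than a proof.
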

\begin{remark}
The case~$n=1$ leads to the standard combinatorial definition of two-point observables as sums over the set~$\cC(e,e')$, while for~$n>1$ one recovers the combinatorial definition of multi-point observables as sums over~$\cC(e_1,\dots,e_{2n})$, cf.~\cite{Hongler-thesis,Chelkak-Izyurov,izyurov-mult}. Theorem~\ref{thm:multipoint} claims that the latter are Pfaffians of the former, see also Section~\ref{sub:s-observables} for the discussion of the combinatorial definition of complex-valued fermionic observables.
\end{remark}

\subsection{Spin and energy density correlations} \label{sub:intro-spin-correlations}
We now move on to combinatorial formulas for spin correlations.
In this section we deal with the (domain walls expansion of the) Ising model on the dual graph~$G^*$ or, equivalently, with the Ising model on \emph{faces} of~$G$ with~`$+$' boundary conditions, which means that we fix the spin of the outer face~$u_{\opname{out}}$ of~$G$ to be~$+1$. Given faces~$u_1,\dots,u_m$ of~$G$, let us fix some collection $\varkappa=\varkappa_{[u_1,..,u_m]}$ of edge-disjoint paths on~$G^*$, which link $u_1,\dots,u_m$ and, possibly,~$u_{\opname{out}}$ so that each of~$u_1,\dots,u_m\in V(G^*)$ has an odd degree in the union of these paths. Further, let~$\rI_{[u_1,..,u_m]}$ denote the diagonal matrix with entries
\[
\left(\rI_{[u_1,..,u_m]}\right)_{e,e}=\begin{cases}-1 & \text{if}~e\in\EE(G)~\text{intersects}~\varkappa; \\ +1 & \text{otherwise.}\end{cases}
\]
We now define the modified Kac--Ward matrix, labeled by oriented edges of $G$, to be
\[
\KW_{[u_1,..,u_m]}=\KW_{[u_1,..,u_m]}(G,x):=\rI_{[u_1,..,u_m]}-\rT\,,
\]
where~$\rT$ is given by~\eqref{eqn:T-def}. Similarly to~\eqref{eqn:K-def},~\eqref{eqn:hat-K-def}, we define
\[
\rK_{[u_1,..,u_m]}=\rK_{[u_1,..,u_m]}(G,x):=\rJ\cdot \KW_{[u_1,..,u_m]}(G,x)\quad\text{and}\quad \wh{\rK}_{[u_1,..,u_m]}:={ i\rU^*\rK_{[u_1,..,u_m]}\rU}\,.
\]
Note that all these matrices depend on the choice of the collection of paths~$\varkappa=\varkappa_{[u_1,..,u_m]}$ linking the faces~$u_1,\dots,u_m$ and~$u_{\opname{out}}$, which is implicit in the notation. Let~$|\varkappa|$ denote the number of edges in~$\varkappa$. The following result is a simple consequence of Theorem~\ref{thm:KW1}.
\begin{proposition}
\label{prop:spin1}
For any planar weighted graph~$(G,x)$ and $u_1,\dots,u_m\in V(G^*)$, we have
\begin{equation} \label{eqn:multi-spin-plane}
\mathbb{E}^+_{G^*}[\sigma_{u_1}\dots\sigma_{u_m}]~=~(-1)^{|\varkappa|}\frac{\Pf{\wh{\rK}_{[u_1,..,u_m]}}}{\Pf{\wh{\rK}}}~=~\pm \left[\frac{\det\KW_{[u_1,\dots,u_m]}}{\det\KW}\right]^{1/2},
\end{equation}
where~$\mathbb{E}^+_{G^*}$ denotes the expectation in the Ising model at inverse temperature $\beta$ on the dual graph~$G^*$ conditional on~$\sigma_{u_{\opname{out}}}=+1$, and $x_e=\exp[-2\beta J_{e^*}]$ for all~$e\in E(G)$.
\end{proposition}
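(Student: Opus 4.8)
The plan is to reduce the statement to the domain-walls expansion of the Ising model on $G^*$ and then to apply Theorem~\ref{thm:KW1}. First I would recall that, under `$+$' boundary conditions, a spin configuration $\sigma$ on the faces of $G$ is encoded by its domain walls $P(\sigma)\in\cE_0(G)=\cE(G)$, and that the spin flips every time a domain wall is crossed. Since $\varkappa=\varkappa_{[u_1,\dots,u_m]}$ has odd degree exactly at $u_1,\dots,u_m$ and $\sigma_{u_{\opname{out}}}=+1$, this gives $\sigma_{u_1}\cdots\sigma_{u_m}=(-1)^{|P(\sigma)\cap\varkappa|}$, where $|P\cap\varkappa|$ denotes the number of edges of $P$ that intersect $\varkappa$ (i.e.\ whose duals lie in $\varkappa$). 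Summing over $\sigma$ and using the domain-walls identification, one obtains
\[
\mathbb{E}^+_{G^*}[\sigma_{u_1}\cdots\sigma_{u_m}]~=~\frac{\cZ_{[u_1,\dots,u_m]}(G,x)}{\cZ_{\opname{Ising}}(G,x)}\,,\qquad\text{where}\quad \cZ_{[u_1,\dots,u_m]}(G,x):=\sum_{P\in\cE(G)}(-1)^{|P\cap\varkappa|}x(P)\,,
\]
so the remaining task is to match $\cZ_{[u_1,\dots,u_m]}(G,x)$ with a Pfaffian of $\wh{\rK}_{[u_1,\dots,u_m]}$.

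Next I would compare the entries of $\rK_{[u_1,\dots,u_m]}=\rJ\cdot\KW_{[u_1,\dots,u_m]}$ with~\eqref{eqn:K-entries} and observe that $\wh{\rK}_{[u_1,\dots,u_m]}$ is obtained from $\wh{\rK}$ by negating exactly the entries indexed by pairs $(e,\overline e)$ with $e$ intersecting $\varkappa$; in terms of the terminal graph $G^\rK$, this negates the weights of the long edges dual to $\varkappa$, a set $S$ with $|S|=|\varkappa|$. By the definition of the Pfaffian, negating the $(i,j)$ and $(j,i)$ entries of an antisymmetric matrix multiplies the term of each perfect matching $\mu$ by $(-1)^{|\mu\cap\{i,j\}|}$; hence, writing $\Pf{\wh{\rK}}=\epsilon_0\,\cZ_{\opname{Ising}}(G,x)$ with $\epsilon_0\in\{\pm1\}$ as in Theorem~\ref{thm:KW1} (all perfect matchings of $G^\rK$ contributing with the common sign $\epsilon_0$), the same $\epsilon_0$ survives and $\Pf{\wh{\rK}_{[u_1,\dots,u_m]}}=\epsilon_0\sum_{D}(-1)^{|D\cap S|}x(P(D))$, the sum running over perfect matchings $D$ of $G^\rK$, identified with even subgraphs $P(D)\in\cE(G)$ via the Fisher-type bijection used in the proof of Theorem~\ref{thm:KW1} (in which a long edge $e$ lies in $D$ iff $e\notin P(D)$). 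Therefore $|D\cap S|=|S|-|P(D)\cap\varkappa|=|\varkappa|-|P(D)\cap\varkappa|$, and the sum equals $(-1)^{|\varkappa|}\cZ_{[u_1,\dots,u_m]}(G,x)$.

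Dividing by $\Pf{\wh{\rK}}=\epsilon_0\,\cZ_{\opname{Ising}}(G,x)$ then yields the first equality in~\eqref{eqn:multi-spin-plane}. For the second equality I would use that $\wh{\rK}$ and $\wh{\rK}_{[u_1,\dots,u_m]}$ have the same determinants as the corresponding Kac--Ward matrices (as recorded in Section~\ref{sub:intro-KW-and-terminal}), so that squaring the ratio of Pfaffians produces $\det\KW_{[u_1,\dots,u_m]}/\det\KW$; extracting the square root and absorbing both $(-1)^{|\varkappa|}$ and $\epsilon_0$ into the indeterminate sign gives the last expression. I would also note that the right-hand sides are independent of the admissible choice of $\varkappa$: two such choices differ by a null-homologous loop in the plane, which changes $|\varkappa|$ and every parity $|P\cap\varkappa|$ in a compatible way, consistently with the fact that $\cZ_{[u_1,\dots,u_m]}(G,x)$ depends only on $u_1,\dots,u_m$.

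I expect the only delicate point to be the sign bookkeeping of the second step: one must check carefully that flipping the signs of the long edges of $G^\rK$ dual to $\varkappa$ leaves untouched the (nontrivial, planarity-based) content of Theorem~\ref{thm:KW1} --- namely that every perfect matching of the non-planar graph $G^\rK$ enters $\Pf{\wh{\rK}}$ with the same sign --- and merely inserts the harmless per-matching factor $(-1)^{|\mu\cap S|}$. Everything else is a routine translation between domain walls, even subgraphs, and dimers on $G^\rK$.
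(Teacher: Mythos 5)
Your argument is correct and follows essentially the same route as the paper: the domain-walls identity $\sigma_{u_1}\cdots\sigma_{u_m}=(-1)^{\varkappa\cdot P}$, the complementarity $\varkappa\cdot D+\varkappa\cdot\rho(D)=|\varkappa|$ between long dimers and even subgraphs (the paper's Lemma~\ref{lemma:x-terminal}), and the observation that the sign analysis behind Theorem~\ref{thm:KW1} is untouched by negating the long-edge entries dual to $\varkappa$, so the twisted Pfaffian is just the signed dimer sum carrying an extra factor $(-1)^{\varkappa\cdot D}$. The only cosmetic caveat is that the matchings do not all enter $\Pf{\wh{\rK}}$ with a literally common sign --- they enter with $\veps(D)=(-1)^{\rt(D)}\veps(D_0)$ --- but since the twist multiplies each term by $(-1)^{|D\cap S|}$, which depends only on $\rho(D)$, your resummation over $\rho^{-1}(P)$ goes through exactly as intended.
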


\begin{remark}
Equation~\eqref{eqn:multi-spin-plane} can be rewritten as a ratio of determinants of discrete $\bar\partial$-type operators; see Section~\ref{sub:three-term} in particular Remark~\ref{rem:ad-hoc-combinatorics} and Remark~\ref{rk:d-bar}(i). These operators share many important properties with their continuous counterparts, especially if one starts with the self-dual Ising model considered on the so-called isoradial graphs (we refer the reader interested in this subject to~\cite{Mercat-CMP,Kenyon-Dirac,chelkak-smirnov-adv} and~\cite{dub-dimers}). Identities similar to~(\ref{eqn:multi-spin-plane}) also appeared in~\cite{BdT4,Dub} in connection with the double-Ising model and dimer techniques for it.
\end{remark}

 We now focus on the particular case when~$m=2n$ is even and~$u_1,\dots,u_m$ are given by~$n$ pairs of neighboring faces~$u_{2k-1},u_{2k\vphantom{1}}$, each pair being separated by an edge~$e_k\in E(G)$. Let
\[
\reps_{e_k}:=\sigma_{u_{2k-1}}\sigma_{u_{2k\vphantom{1}}}
\]
denote the so-called \emph{energy density} at the edge~$e_k$ and let~$\rE=\cup_{k=1}^n\{e_k,\bar{e}_k\}\subset\EE(G)$ be the corresponding set of $2n$ oriented edges (each edge $e_k$ is taken along with is reverse $\bar{e}_k$).
In this case, there is a natural choice of the collection of paths~$\varkappa=\varkappa_{[u_1,..,u_{2n}]}$ linking~$u_1,\dots,u_{2n}$ simply given by taking all the dual edges~$e_1^*,\dots,e_n^*$. For this choice of~$\varkappa$, Proposition~\ref{prop:spin1} reads
\[
\mathbb{E}^+_{G^*}[\reps_{e_1}\dots\reps_{e_n}]~=~(-1)^{n}\frac{\Pf{\wh{\rK}-2\wh{\rJ}_\rE}}{\Pf{\wh{\rK}}}\,,\quad\text{with}\quad ({\wh\rJ}_\rE)_{e,e'}=\begin{cases}{i\overline{\eta}_e\eta_{e'}} & \text{if}~e'=\overline{e}; \\ 0 & \text{otherwise.}\end{cases}
\]
Note that~$\wh{\rJ}_\rE$ is a real anti-symmetric matrix with~$\pm 1$ entries depending on the choices of~$\eta_{e_k}$. Since~$\wh{\rJ}_\rE$ vanishes on~$\EE(G)\setminus\rE$ and~$-\wh{\rJ}_\rE^{\,2}$ is the identity matrix on~$\rE$, we have
\[
\det [\,\rI-2\wh{\rJ}_\rE \wh{\rK}^{-1}\,]~=~ \det [\,\rI-2\wh{\rJ}_\rE \wh{\rK}^{-1}\,]_{e,e'\in\rE} ~=~
\det [\,\wh{\rJ}_\rE+2\wh{\rK}^{-1}\,]_{e,e'\in\rE}\,,
\]
and hence
\[
\mathbb{E}^+_{G^*}[\reps_{e_1}\dots\reps_{e_n}]= \pm\Pf{ \wh{\rJ}_\rE\!+\!2\wh{\rK}^{-1}}_{e,e'\in\rE}\,,
\]
with the $\pm$ sign depending on the ordering of~$\rE=\{e_1,\overline{e}_1,\dots,e_n,\overline{e}_n\}$ and the choices of~$\eta_{e_k}$.

\begin{remark} \label{rem:formulas-via-K^-1}
\emph{(i)} This Pfaffian formula for multi-point energy density expectations can be also deduced from Theorem~\ref{thm:multipoint}. Indeed, if~$e\in E(G)$ and a spin configuration~$\sigma\in\{\pm 1\}^{V(G^*)}$ is encoded by domain walls~$P\in\cE(G)$, then~$\frac{1}{2}(\reps_{e}+1)$ is the indicator of the event~$e\not\in P$. On the other hand,
\[
\cC(e_1,\overline{e}_1,\dots,e_n,\overline{e}_n)=\{P\in\cE(G):e_1,\dots,e_n\notin P\},
\]
and the sign~$\tau(P)=\prod_{k=1}^n({i\eta_{e_k}^{\vphantom{|}}\!\overline{\eta}_{\overline{e}_k}})$ is independent of~$P$ on this set. Therefore, if one chooses a proper ordering of the oriented edges of~$G$ according to the choices of~$\eta_{e_k}$, Theorem~\ref{thm:multipoint} implies
\begin{equation}\label{eqn:energy-Pfaff}
\mathbb{E}^+_{G^*}[{\textstyle\frac{1}{2}}(\reps_{e_1}\!+1)\dots{\textstyle\frac{1}{2}}(\reps_{e_n}\!+1)]= \Pf{\wh{\rK}^{-1}}_{e,e'\in\rE}\,.
\end{equation}
In other words, edges of~$G$ carrying the values~$\reps_e=+1$ form a Pfaffian process with the kernel~$\wh{\rK}^{-1}$, and the formula for~$\mathbb{E}^+_{G^*}[\reps_{e_1}\dots\reps_{e_n}]$ given above easily follows {by multilinearity}. This approach was used in~\cite{Hongler-thesis}
to prove the existence of scaling limits for multi-point energy density correlations in general simply-connected domains at criticality; see more comments in Section~\ref{sub:s-observables}.

\smallskip

\noindent \emph{(ii)} In a similar manner, one can consider the faces $u_1,\ldots,u_m$ in formula~\eqref{eqn:multi-spin-plane} as variables. If we then replace~$u_1$ by one of its neighboring faces~$u_1'$ (or, more generally, move each~$u_k$ by several faces to some other face~$u_k'$) and adjust the collection of cuts~$\varkappa_{[u_1,..,u_m]}$ accordingly, then the matrix~$\wh\rK_{[u_1',\dots,u_m']}$ is a small rank perturbation of the matrix $\wh\rK_{[u_1,\dots,u_m]}$ and hence the ratio
\[
\frac{\mathbb{E}^+_{G^*}[\sigma_{u_1'}\dots\sigma_{u_m'}]}{\mathbb{E}^+_{G^*}[\sigma_{u_1}\dots\sigma_{u_m}]}= \pm\frac{\Pf{\wh\rK_{[u_1',\dots,u_m']}}}{\Pf{\wh\rK_{[u_1,\dots,u_m]}}}
\]
admits a simple expression in the entries of $\wh\rK^{-1}_{[u_1,\dots,u_m]}$. In particular, this yields a short proof of~\cite[Lemma 2.6]{CHI} which is a starting point for the analysis of the scaling limit of multi-point spin correlations in general simply-connected domains at criticality. This observation can be also used for the systematic study of other spin pattern correlations, cf.~\cite{hongler-and-co}.
\end{remark}

\subsection{The double-Ising model} \label{sub:intro-dblI}
The aim of this section is to indicate that all the combinatorial formulas discussed above admit modifications for the so-called \emph{double}-Ising model which is defined as a pointwise product of two independent Ising models on the (faces) of the same planar weighted graph~$(G,x)$, \emph{coupled along the boundary} in a way which we now describe. Below we assume that the graph~$G$ contains a number of univalent (i.e. degree~$1$) vertices incident to the outer face~$u_{\opname{out}}$; note that adding/removing such vertices does not affect the Ising model defined on faces of~$G$, nor the value~$\cZ_{\opname{Ising}}(G,x)$. We call such vertices the \emph{boundary vertices} of~$G$ and edges linking them to the bulk of~$G$ the \emph{boundary edges} of~$G$.

\begin{figure}
\captionsetup[subfigure]{position=below,justification=justified,singlelinecheck=false,labelfont=bf}
\begin{subfigure}[t]{.47\textwidth}
\labellist\small\hair 2.5pt
\pinlabel {$u_{\opname{out}}$} at 25 80
\pinlabel {$1$} at 210 80
\pinlabel {$1$} at 250 92
\pinlabel {$1$} at 290 85
\pinlabel {$1$} at 325 83
\endlabellist
\centering{\psfig{file=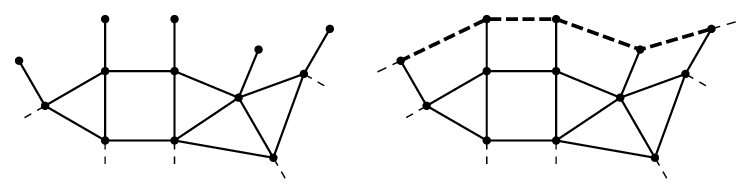,width=\linewidth}}
\caption{A portion of the boundary of a graph~$G$ with univalent vertices and the corresponding portion of $\widetilde{G}$ where the additional edges are dashed}\label{dbl-I-tilde}
\end{subfigure}
\hfill
\begin{subfigure}[t]{.47\textwidth}
\labellist\small\hair 2.5pt
\pinlabel {$x_e$} at 109 102
\pinlabel {$i x_e$} at 320 130
\endlabellist
\centering{\psfig{file=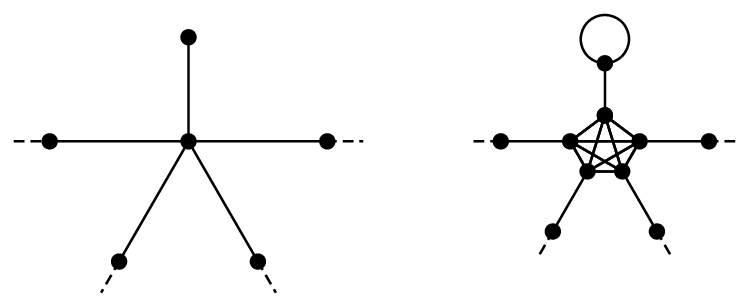,width=.9\linewidth}}
\caption{The mapping of a univalent boundary vertex to a self-loop attached to the terminal graph}\label{dbl-I-1}
\end{subfigure}
\hfill
\caption{}
\end{figure}

Let us add auxiliary edges carrying weights~$1$, linking the boundary vertices of~$G$ inside of~$u_{\opname{out}}$ in a cyclic order, to obtain a weighted graph~$(\widetilde{G},x)$; see Figure~\ref{dbl-I-tilde}. The faces of~$\widetilde{G}$ adjacent to the boundary edges of~$G$ are called~\emph{boundary faces}. The states of the associated \emph{double-Ising model} (with `$+$' boundary conditions) are pairs of spin configurations~$\sigma,\sigma'$ on the faces of the new graph~$\widetilde{G}$ such that
\[
\widetilde{\sigma}_u:=\sigma_u\sigma'_u=+1\quad\text{for~all~boundary~faces~$u$.}
\]
Similarly to the domain walls representation of the single-Ising model discussed in Section~\ref{sub:intro-Z-Ising}, these spin configurations can be encoded by \emph{pairs} of domain walls~$P,P'\in\cE(\widetilde{G})$ such that each boundary edge of~$G$ is either occupied by both these walls, or by none. The associated partition function is given by
\[
\cZ_{\opname{dbl-I}}(G,x)~=~\sum\nolimits_{P,P'\in\cE(\widetilde{G})\,:\, (P\btu P')\cap E_\partial(G)=\emptyset} x(P)x(P')\,,
\]
where~$E_\partial(G)\subset E(G)$ denotes the set of (unoriented) boundary edges of~$G$ and $\Delta$ stands for the symmetric difference. To compute this partition function, we introduce a modified Kac--Ward matrix~$\widetilde{\rK}$, indexed by the set of oriented edges of~$G$ in the same way as the matrix~$\rK$ given by~\eqref{eqn:K-entries}, with the entries
\[
\widetilde{\rK}_{e,e'}=\rK_{e,e'}+ \begin{cases}ix_e &\text{if~$e'=e$ is an inward oriented boundary edge;} \\ 0 & \text{otherwise.} \end{cases}
\]
Note that $\widetilde{\rK}$ can be understood as the weighted adjacency matrix of the graph obtained from~$G^\rK$ by adding a loop to each vertex corresponding to an inward oriented boundary edge; see Figure~\ref{dbl-I-1}. The following result is an analog of Theorem~\ref{thm:KW1} for the double-Ising model.

\begin{theorem}
\label{thm:KW3}
For any planar weighted graph~$(G,x)$, one has
\[
\cZ_{\opname{dbl-I}}(G,x)=(-1)^{|E(G)|}\det \widetilde{\rK}\,.
\]
\end{theorem}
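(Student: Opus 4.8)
The plan is to mimic the strategy used for Theorem~\ref{thm:KW1}, where $\cZ_{\opname{Ising}}(G,x)=\pm\Pf{\wh\rK}$ is obtained by identifying $\wh\rK$ with a Kasteleyn-type weighted adjacency matrix of the terminal graph $G^\rK$ and expanding its Pfaffian (equivalently, its determinant) as a signed sum over dimer configurations, with all signs turning out to be $+1$. Here the relevant object is $\widetilde\rK$, which by the discussion preceding the statement is the weighted adjacency matrix of the graph $\widetilde{G}{}^{\rK}$ obtained from $G^\rK$ by attaching a self-loop of weight $ix_e$ at each vertex coming from an inward-oriented boundary edge $e$. So the first step is to expand $\det\widetilde\rK$ combinatorially. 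Since $\widetilde\rK$ is no longer antisymmetric (the loop entries $ix_e$ sit on the diagonal), one should work with $\det$ rather than $\Pf$; writing $\det\widetilde\rK=\det(\rK+D)$ with $D$ diagonal supported on the inward boundary edges, one expands via the permutation-sum definition of the determinant. Each term corresponds to a permutation of $\EE(G)$, i.e.\ to a disjoint union of cycles in $G^\rK$ together with fixed points; the fixed points at inward boundary vertices contribute a factor $ix_e$, the other fixed points are forbidden (the diagonal of $\rK$ away from the loops is zero except the $e'=\bar e$ entries, which are handled as length-$2$ cycles). This reduces $\det\widetilde\rK$ to a signed weighted sum over subgraphs of $G^\rK$ that are disjoint unions of cycles and self-loops covering all vertices, which is exactly a dimer-type configuration on the terminal graph with loops.

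The second step is to translate such configurations on $\widetilde{G}{}^{\rK}$ back into the language of $\cZ_{\opname{dbl-I}}(G,x)$. A single dimer/cycle configuration on $G^\rK$ corresponds, by Fisher's correspondence used already in Section~\ref{sub:prelim}, to an even subgraph $P\in\cE(G)$ (with the $2^{\#\text{crossings}}$ ambiguity in resolving crossings). The point of the extra self-loops is that each inward boundary vertex is now allowed to be "covered by a loop" instead of by a long or short edge, and a loop at $e$ carries weight $ix_e$; geometrically, selecting the loop should correspond to declaring the boundary edge $e$ \emph{not} occupied by the walk $P$ while simultaneously toggling a second independent configuration $P'$ there. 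The combinatorial bookkeeping is: the set of coverings of $\widetilde{G}{}^{\rK}$ should biject with pairs $(P,P')\in\cE(\widetilde G)^2$ satisfying $(P\btu P')\cap E_\partial(G)=\emptyset$, and one should check that the product of edge-weights $x^\rK$ together with the loop-weights $ix_e$ matches $x(P)x(P')$ up to an overall sign. The factors of $i$ and the $(-1)^{|E(G)|}$ prefactor are precisely what is needed to absorb the $\det\rJ=(-1)^{|E(G)|}$ discrepancy and the $i$'s appearing in half-edge/loop weights; one should pin down the sign by evaluating at $x\equiv 0$, where $\det\widetilde\rK=\det\rK=(-1)^{|E(G)|}\det\KW(G,0)=(-1)^{|E(G)|}$ and $\cZ_{\opname{dbl-I}}(G,0)=1$ (only $P=P'=\emptyset$ contributes), so the normalization is consistent.

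The third, and main, step is the sign cancellation: one must show that all nonzero terms in the expansion of $(-1)^{|E(G)|}\det\widetilde\rK$, after grouping over resolutions of crossings, come with sign $+1$, so that they sum to $x(P)x(P')\ge 0$ termwise. For the loop-free part this is exactly the content of the proof of Theorem~\ref{thm:KW1} (the Kac--Ward sign is $+1$), so the new input is understanding how the diagonal loop entries $ix_e$ interact with the winding-angle signs $\exp[\frac i2\rw(\cdot,\cdot)]$ carried by the short edges around a boundary vertex. Because a boundary vertex of $G$ is univalent and incident to the outer face, the local picture at the corresponding clique in $G^\rK$ (after adding the loop) is very simple — essentially a single long edge plus the loop — so the winding contribution of any cycle through it is controlled, and the factor $i$ in the loop weight should be exactly the missing half-turn that makes the sign work out. \textbf{The hard part} will be setting up the crossing-resolution/Fisher correspondence in the presence of these loops so that the pairing $(P,P')$ is genuinely a bijection (in particular that the "coupled along the boundary" condition $(P\btu P')\cap E_\partial(G)=\emptyset$ is exactly reproduced, with correct multiplicities $2^{\#\text{crossings}}$ on both sides) and then verifying that the accumulated signs — Kac--Ward winding signs, permutation signs from the determinant expansion, and the powers of $i$ from the loops and half-edges — all conspire to $+1$. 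Once this is done, comparing with the definition of $\cZ_{\opname{dbl-I}}(G,x)$ finishes the proof; the extension to `free' boundary conditions then follows by the same specialization remark as in Section~\ref{sub:intro-Z-Ising}.
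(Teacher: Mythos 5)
Your overall strategy is indeed the one the paper follows: interpret $\widetilde{\rK}$ as the weighted adjacency matrix of the terminal graph $G^\rK$ with self-loops of weight $ix_e$ at the inward boundary vertices, expand $\det\widetilde{\rK}$ combinatorially, and match the result against the domain-walls expansion of $\cZ_{\opname{dbl-I}}(G,x)$, which the paper first rewrites as $\sum_{\rE\subset\EE_\partial(G)}x(\rE)\bigl[\sum_{P\in\cC(\rE)}x(P)\bigr]^2$ so that the chosen subset $\rE$ of diagonal entries contributes $i^{|\rE|}x(\rE)\det\rK^\rE$. Your normalization check at $x\equiv 0$ and the role of the $(-1)^{|E(G)|}$ prefactor are also correct.

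There is, however, a genuine gap in steps 2--3. A term of the permutation expansion of $\det\widetilde{\rK}$ is a \emph{cycle cover} of $G^\rK$ (plus loops), not a dimer configuration, and inside a clique $K_{d(v)}$ with $d(v)\ge 3$ such a cover can contain cycles of \emph{odd} length $\ge 3$. These terms correspond to no pair of dimer configurations $(D,D')$, hence to no pair $(P,P')$, so the correspondence with $\cZ_{\opname{dbl-I}}$ collapses unless one first shows that they cancel; the paper does this by an orientation-reversal argument (its Claim A: reversing an odd cycle flips the sign of its weight). The same cancellation is what forces $\det\rK^\rE=0$ when $|\rE|$ is odd --- a case you must dispose of, since $\cC(\rE)=\emptyset$ for odd $|\rE|$ while the expansion of $\det\widetilde{\rK}$ does produce such terms. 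Relatedly, your assertion that the loop-free part ``is exactly the content of the proof of Theorem~\ref{thm:KW1}'' is not right: that proof is a Pfaffian/single-dimer argument and does not yield a termwise combinatorial identity for the \emph{determinant}. The paper has to redo Theorem~\ref{thm:KW1} via the determinant expansion (a ``second proof''), in which each even cycle of length $\ge 4$ splits into two dimer configurations in exactly two ways --- this $2^{\ell(C)}$ factor is what produces the square, i.e.\ the pair $(P,P')$ --- and the signs are controlled by a count of clockwise turns at all-short-edge vertices (Claims A and B there, and B$'$ in the presence of loops), not by the single-dimer crossing count. Since the missing ingredients (odd-cycle cancellation, the $2^{\ell}$ splitting, and the determinant-specific sign lemma) are structural rather than bookkeeping, the proposal as written is a correct plan but not yet a proof.
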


Note that, contrary to the Kac--Ward formula~\eqref{eqn:KWformula} and Theorem~\ref{thm:KW1}, the determinant of the modified matrix~$\widetilde{\rK}$ cannot be written as the square of a Pfaffian, since the matrix~${i\rU^*\widetilde{\rK}\rU}$ contains non-vanishing diagonal entries and thus is not anti-symmetric. This reflects the fact that~$\cZ_{\opname{dbl-I}}(G,x)$ is \emph{not} the square of any single-Ising model partition function since the two spin configurations~$\sigma,\sigma'$ are now coupled along~$u_\mathrm{out}$.

Similarly to Section~\ref{sub:intro-spin-correlations}, for a given collection of (inner) faces~$u_1,\dots,u_m$, let~$\varkappa=\varkappa_{[u_1,..,u_m]}$ be a collection of paths in the dual graph~$G^*$ linking these faces to each other and, possibly, to the outer face~$u_{\opname{out}}$. Denote
\[
\widetilde{\rK}_{[u_1,..,u_m]}:= \widetilde{\rK}-\rJ+\rJ\cdot\rI_{[u_1,..,u_m]}\,.
\]
In other words, {to construct~$\widetilde{\rK}_{[u_1,..,u_m]}$ we replace by~$-1$ all the entries~$\widetilde{\rK}_{e,\overline{e}}=+1$ of~$\widetilde{\rK}$ that correspond to the edges~$e$ intersecting with~$\varkappa_{[u_1,..,u_m]}$,
exactly as in the definition of the matrix~$\rK_{[u_1,..,u_m]}$ in Section~\ref{sub:intro-spin-correlations}.}

\begin{proposition} \label{prop:spin4}
Let~$u_1,\dots,u_m$ be a collection of (inner) faces of~$G$. Then,
\[
\mathbb{E}_{\opname{dbl-I}}^+ [\widetilde{\sigma}_{u_1}\dots\widetilde{\sigma}_{u_m}] = \frac{\det\widetilde{\rK}_{[u_1,..,u_m]}}{\det\widetilde{\rK}}\,,
\]
where~$\mathbb{E}_{\opname{dbl-I}}^+$ stands for the expectation in the double-Ising model with~`$+$' boundary conditions.
\end{proposition}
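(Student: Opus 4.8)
The plan is to follow the route by which Proposition~\ref{prop:spin1} is deduced from Theorem~\ref{thm:KW1}, with Theorem~\ref{thm:KW3} now playing the role of Theorem~\ref{thm:KW1}. First I would recall the dual-path description of spin products. Let $(\sigma,\sigma')$ be a state of the double-Ising model with `$+$' boundary conditions, encoded by a pair of domain walls $P,P'\in\cE(\widetilde G)$ with $(P\btu P')\cap E_\partial(G)=\emptyset$. Since $\widetilde\sigma=\sigma\sigma'$ is encoded by the even subgraph $P\btu P'$ and the cuts $\varkappa=\varkappa_{[u_1,..,u_m]}$ have odd degree at each $u_j$ and link these faces to the region where $\widetilde\sigma\equiv+1$, the standard domain-wall/spin correspondence gives
\[
\widetilde\sigma_{u_1}\cdots\widetilde\sigma_{u_m}\;=\;(-1)^{\#\{e\in P\btu P'\,:\,e\ \text{crosses}\ \varkappa\}}\;=\;(-1)^{\#\{e\in P\,:\,e\ \text{crosses}\ \varkappa\}}\cdot(-1)^{\#\{e\in P'\,:\,e\ \text{crosses}\ \varkappa\}},
\]
the last equality holding because the two exponents on the right differ from the one in the middle by twice the number of edges of $P\cap P'$ crossing $\varkappa$. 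Summing against the weights $x(P)x(P')$ (the constraint $(P\btu P')\cap E_\partial(G)=\emptyset$ being insensitive to the sign twist) I would obtain
\[
\mathbb{E}^+_{\opname{dbl-I}}[\widetilde\sigma_{u_1}\cdots\widetilde\sigma_{u_m}]\;=\;\frac{1}{\cZ_{\opname{dbl-I}}(G,x)}\sum_{\substack{P,P'\in\cE(\widetilde G)\\ (P\btu P')\cap E_\partial(G)=\emptyset}}(-1)^{\#\{e\in P\,:\,e\ \text{crosses}\ \varkappa\}+\#\{e\in P'\,:\,e\ \text{crosses}\ \varkappa\}}\,x(P)\,x(P'),
\]
whose denominator equals $(-1)^{|E(G)|}\det\widetilde\rK$ by Theorem~\ref{thm:KW3}. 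It then remains to identify the twisted numerator with $(-1)^{|E(G)|}\det\widetilde\rK_{[u_1,..,u_m]}$.

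For this I would reopen the proof of Theorem~\ref{thm:KW3} rather than cite it as a black box. By construction $\widetilde\rK_{[u_1,..,u_m]}=\widetilde\rK-2\rJ_\varkappa$, where $\rJ_\varkappa$ has entry $1$ at each position $(e,\overline e)$ with $e$ crossing $\varkappa$ and $0$ elsewhere; equivalently, one flips the sign of exactly those long-edge entries $\widetilde\rK_{e,\overline e}=+1$ that correspond to edges of $G$ crossed by $\varkappa$, while leaving untouched the winding phases, the diagonal entries $ix_e$ attached to inward boundary edges, and the Kac--Ward sign cancellation underlying Theorem~\ref{thm:KW3} (the perturbation being real and supported on the $(e,\overline e)$-pairs). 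Tracking these signs through the combinatorial expansion of $\det\widetilde\rK$ that proves Theorem~\ref{thm:KW3} — in which each monomial is indexed by a pair $(P,P')$ of domain walls and, for each edge $e$ crossed by $\varkappa$, uses the long-edge entry over $e$ once on the $P$-side and once on the $P'$-side — one finds that every term's weight gets multiplied precisely by $(-1)^{\#\{e\in P:\,e\ \text{crosses}\ \varkappa\}+\#\{e\in P':\,e\ \text{crosses}\ \varkappa\}}$. Here it matters that \emph{no} overall prefactor $(-1)^{|\varkappa|}$ survives, contrary to Proposition~\ref{prop:spin1}: the two would-be prefactors coming from the $P$- and $P'$-sides cancel. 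This identifies the numerator above with $(-1)^{|E(G)|}\det\widetilde\rK_{[u_1,..,u_m]}$, and dividing by $(-1)^{|E(G)|}\det\widetilde\rK=\cZ_{\opname{dbl-I}}(G,x)$ finishes the proof.

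The main obstacle is exactly this last bookkeeping step: one must unwind, inside the proof of Theorem~\ref{thm:KW3}, the precise bijection between the monomials of the single determinant $\det\widetilde\rK$ and the \emph{pairs} $(P,P')$ of domain walls — in particular how both configurations emerge from one permutation of $\EE(G)$ and exactly when the long-edge entries $\widetilde\rK_{e,\overline e}$ are consumed along a monomial — so as to be certain that the sign flips contribute precisely $(-1)^{\#\{e\in P\}+\#\{e\in P'\}}$ restricted to edges crossing $\varkappa$, with no residual sign coming from doubly-used edges, from the boundary loops attached to inward boundary edges, or from the parity of $|\varkappa|$ (in particular when $\varkappa$ is allowed to reach $u_{\opname{out}}$, which is forced when $m$ is odd, one should check that a crossing of a boundary edge $e\in E_\partial(G)$ contributes trivially, as it must since $P$ and $P'$ agree on $E_\partial(G)$). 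Everything else is a routine transcription of the corresponding steps in the derivation of Proposition~\ref{prop:spin1}.
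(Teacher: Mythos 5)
Your argument is correct and follows the paper's own proof essentially verbatim: reduce the expectation to the twisted sum $\sum_{P,P'}(-1)^{\varkappa\cdot P+\varkappa\cdot P'}x(P)x(P')$ over coupled pairs of domain walls, then rerun the proof of Theorem~\ref{thm:KW3} with these extra signs to identify the numerator with $(-1)^{|E(G)|}\det\widetilde{\rK}_{[u_1,..,u_m]}$. The additional bookkeeping you supply --- in particular that the two would-be $(-1)^{|\varkappa|}$ prefactors coming from the $P$- and $P'$-sides cancel, unlike in Proposition~\ref{prop:spin1} --- is exactly the check the paper leaves implicit.
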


We now briefly discuss {geometric} objects arising in the double-Ising model, the so-called \emph{XOR-Ising loops}, see \cite{Wilson-xor, BdT-xor}. For simplicity, let us assume that the graph~$G$ is trivalent (except at boundary vertices, which have degree~$1$). Then, given a double-Ising model configuration~$\widetilde{\sigma}=\sigma\sigma'$, the set~$P(\widetilde{\sigma})\in \cE(G)$ of edges separating the faces~$u$ with~$\widetilde{\sigma}_u=+1$ from those with~$\widetilde{\sigma}_u=-1$ is a collection of non-intersecting loops, which can be thought about as a result of the XOR (exclusive-or) operation applied to the two single-Ising domain walls configurations~$P(\sigma)$ and~$P(\sigma')$.

In~\cite{Wilson-xor}, Wilson conjectured that the scaling limits of these XOR-loops in the \emph{critical}
model (considered in discrete domains drawn on the honeycomb lattice) can be described as the union of level sets of the Gaussian Free Field with an appropriately tuned spacing. Recently, this conjecture was strongly supported by the results of Boutillier and de Tili\`ere~\cite{BdT-xor}, who showed that, at the discrete level, these loops have the same distribution as contour lines of a \emph{single}-dimer height function on a related bipartite graph. At the same time, the convergence result for these height functions known to date does not allow one to derive enough information about the behavior of their level lines, so one can wonder if some generalization of Theorem~\ref{thm:KW3} could help for that matter. Let us briefly discuss why this is not straightforward.

\begin{remark} A natural idea, motivated by the recent works of Kenyon~\cite{Kenyon-bundle-Laplacian,Kenyon-dd} and Dub\'edat~\cite{Dubedat-isomonodromy} on the double-dimer model, would be to study a twisted partition function of the double-Ising model in order to track the topology of the loops using some \emph{quaternionic} version of the matrix~$\widetilde{\rK}$. Unfortunately, the combinatorial expansions of such Q-determinants are no longer given by weighted sums over double-Ising configurations and additional terms come into play, similarly to the odd-length cycles in the double-dimer model on a non-bipartite graph, see~\cite[p.~482]{Kenyon-dd}. Nevertheless, it seems worthwhile to understand the arising expansions better and in particular one can try to interpret these additional terms as encoding some interaction between the loops.
\end{remark}

One can also try another common strategy and focus on a single interface (domain wall)~$\gamma_{a,b}$ generated by the so-called \emph{Dobrushin boundary conditions}. The latter are defined as follows: for a given pair~$a,b$ of boundary edges, let us condition the two spin configurations~$\sigma,\sigma'$ to satisfy
\begin{equation}
\label{eqn:dblI-Dobrushin-bc}
\widetilde{\sigma}_u=\sigma_u\sigma'_u= \begin{cases} -1 & \text{for~boundary~faces~$u$~on~the~arc~$(ab)$;} \\
+1 & \text{for~boundary~faces~$u$~on~the~arc~$(ba)$,} \end{cases}
\end{equation}
where~$(ab)$ (resp.~$(ba)$) denotes the part of the boundary of~$G$ from~$a$ to~$b$ when going counterclockwise (resp. clockwise). In Section~\ref{sub:dblI-Dobrushin} we prove an analog of Theorem~\ref{thm:KW3} for these boundary conditions and discuss how one can construct the so-called s-holomorphic martingales, which track the evolution of~$\gamma_{a,b}$.
This could pave a way to the understanding of a scaling limit of these interfaces, e.g. following the strategy implemented for the critical (single-) Ising model in~\cite{chelkak-smirnov,CDCHKS}. Nevertheless, it is also not straightforward, and we expect some conceptual obstacles when passing to a limit in the arising discrete boundary value problems for these s-holomorphic functions, see Section~\ref{sub:dblI-s-hol-fct} and Remark~\ref{rem:dblI-martingale-strategy-oops} for more details.

\addtocontents{toc}{\SkipTocEntry}
\subsection*{Acknowledgements}

We thank C\'edric Boutillier, Sunil Chhita, B\'eatrice de Tili\`{e}re, Hugo Duminil-Copin, Alexander Glazman, Cl\'ement Hongler, Konstantin Izyurov, Richard Kenyon and Stanislav Smirnov for many useful discussions on the subject, as well as Steffen Rohde, Wendelin Werner and David Wilson for their kind feedback on the double-Ising model. Also, we would like to thank the referee for useful suggestions.
This work started while the first-named author was visiting the Forschungsinstitut f\"{u}r Mathematik and completed at the Institut for Theoretical Studies at ETH Z\"urich. The support of Dr.~Max R\"ossler, the Walter Haefner Foundation and the ETH Foundation is gratefully acknowledged. The work of the second-named author was supported by a grant of the Swiss FNS.


\setcounter{equation}{0}
\section{The planar case}
\label{sec:planar}
We start this section with some preliminaries, then prove Theorem~\ref{thm:KW1}, and then show how the proof extends to give Theorem~\ref{thm:multipoint}. We conclude this section with the proof of Proposition~\ref{prop:spin1} and a discussion of the corresponding generalization of Theorem~\ref{thm:multipoint}.

\subsection{Preliminaries}
\label{sub:prelim}
Recall that~$\wh{\rK}$ is nothing but the signed skew-symmetric adjacency matrix of the weighted terminal graph~$(G^\rK,x^\rK)$. Namely, for two adjacent vertices~$e$ and~$e'$ of~$G^\rK$ (which are identified with oriented edges of~$G$), {we have~$\wh{\rK}_{e,e'}=\veps_{e,e'}\cdot x_{e,e'}^\rK$} with the sign~$\veps_{e,e'}=\pm 1$ given by
\begin{equation}
\label{eqn:veps-def}
\veps_{e,e'}=\begin{cases}
{i\overline{\eta}_e\eta_{e'}} &\text{if~$e$ and~$e'$ are linked by a long edge of~$G^\rK$;}\\
{-i\overline{\eta}_e\eta_{e'}}\exp[\frac{i}{2}\rw(\overline{e},e')]& \text{if~$e$ and~$e'$ are linked by a short edge of~$G^\rK$.}
\end{cases}
\end{equation}

Further, recall that a {\em dimer configuration\/} (aka {\em perfect matching\/}) on a graph~$\Gamma$ is a collection of edges of~$\Gamma$ {(called dimers)} such that each vertex
of~$\Gamma$ is incident to exactly one of these edges. We shall denote by~$\mathcal{D}(\Gamma)$ the set of dimer configurations on~$\Gamma$.

For $D\in\mathcal{D}(G^\rK)$, let $x^\rK\!(D)$ denote the product of weights of all dimers in~$D$, and let~$\rt(D)$ denote the {\em self-intersection number\/} of~$D$, that is, the number of times different edges of~$D$ cross one another (note that this can happen only inside of the cliques $K_{d(v)}$).

\begin{lemma}
\label{lemma:terminal}
For any planar weighted graph~$(G,x)$, the Ising partition function on~$G$ can be expressed as the signed dimer partition function on the terminal graph~$(G^\rK,x^\rK)$ as follows:
\[
\cZ_{\opname{Ising}}(G,x)=\sum_{D\in\cD(G^\rK)}(-1)^{\rt(D)}x^\rK\!(D)\,.
\]
\end{lemma}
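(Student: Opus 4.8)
The plan is to establish a weight-preserving, sign-preserving correspondence between even subgraphs of $G$ and dimer configurations on the terminal graph $G^\rK$, and then argue that the signs $(-1)^{\rt(D)}$ organize so that the total contribution of all dimer configurations mapping to a given $P\in\cE(G)$ equals $x(P)$.

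First I would describe the combinatorial map. Fix $P\in\cE(G)$. At each vertex $v$ of $G$, the edges of $P$ incident to $v$ form an even-size subset $S_v$ of the $d(v)$ half-edges at $v$. In $G^\rK$, the long edges sitting over the edges of $P$ are forced to be dimers (each long edge over an edge not in $P$ is \emph{not} a dimer). This leaves, inside the clique $K_{d(v)}$, exactly the $d(v)-|S_v|$ vertices corresponding to half-edges \emph{not} in $P$ still unmatched, and this is an even number, so it can be completed by a perfect matching of the induced sub-clique using short edges. Conversely, any $D\in\cD(G^\rK)$ determines, via its long dimers, a subset $P\subset E(G)$, and the matching condition inside each clique forces $P$ to have even degree at every vertex, i.e. $P\in\cE(G)$. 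So $\cD(G^\rK)$ fibers over $\cE(G)$, with the fiber over $P$ being $\prod_v \mathcal{M}(K_{d(v)\setminus S_v})$, a product over vertices of perfect matchings of the complement cliques. Since all short edges have weight $1$... wait — here one must be careful: the weight of the short edge joining $e,e'$ is $(x_ex_{e'})^{1/2}$, \emph{not} $1$. But if $e,e'$ correspond to half-edges at $v$ that are \emph{absent} from $P$, there is no factor $x_e$ coming from $P$ on those edges; since each half-edge belongs to a unique clique, the contribution is clean only if we observe that $x^\rK$ of the long dimers over $P$ is exactly $\prod_{e\in P} x_e^{1/2}\cdot x_e^{1/2}$... no. Let me restate: the long edge over $e\in E(G)$ has weight $1$ in $x^\rK$, so long dimers contribute nothing, and it is the \emph{short} dimers that carry weights $(x_{e}x_{e'})^{1/2}$. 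The correct bookkeeping is therefore the following: inside $K_{d(v)}$ we must perfectly match the half-edges \emph{absent} from $P$, but actually it is cleaner to match those \emph{present} — one should double-check which convention in Fig.~\ref{Fig:GK} is in force, and then the product of short-edge weights over the whole matching telescopes to $\prod_{e\in P} x_e = x(P)$ because each edge $e\in P$ contributes its $x_e^{1/2}$ twice, once from the clique at each endpoint. So the weight is $x(P)$, independent of which matching of the complement cliques is chosen.

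The core difficulty is the sign. For fixed $P$, I need $\sum_{D\mapsto P}(-1)^{\rt(D)} = x(P)$, i.e. the signed count of matchings of the complement cliques is $+1$. Crossings occur only inside cliques, so $\rt(D) = \sum_v \rt_v(D)$ where $\rt_v$ counts crossings among the short dimers inside $K_{d(v)}$, and the sum factors as $\prod_v\big(\sum_{\text{matchings }M\text{ of }K_{d(v)\setminus S_v}}(-1)^{\rt_v(M)}\big)$. So it suffices to show: for a clique $K_{2r}$ on points placed in convex position (on a small circle around $v$, in the cyclic order inherited from the planar embedding of $G$), the signed sum over perfect matchings, where the sign is $(-1)^{\#\text{crossings}}$, equals $1$. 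This is a classical fact — it is exactly the statement that the number of non-crossing perfect matchings of $2r$ points in convex position, weighted with sign $(-1)^{\text{crossings}}$, telescopes to $1$; equivalently it is the Pfaffian of the ``all-ones above diagonal'' skew matrix, or can be proved by induction on $r$ by fixing the partner of a distinguished point and tracking the parity of crossings created. The point is that this convex-position signed sum collapses to the unique non-crossing matching's contribution, which is $+1$.

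The main obstacle I anticipate is precisely this crossing-sign computation, and in particular making sure the cyclic order on the clique vertices used to \emph{define} the crossings of $D$ (which comes from the planar embedding, hence from the angular order of the edges at $v$) is consistent with the order for which the signed matching sum is known to equal $1$ — and, more subtly, checking that for a non-complete subset $S_v$ (when we match only the half-edges absent from, or present in, $P$) the relevant points are \emph{still} in convex position so the lemma for $K_{2r}$ applies to each sub-clique. Once that is in place, multiplying over all vertices $v$ and summing over $P\in\cE(G)$ gives $\sum_{D}(-1)^{\rt(D)}x^\rK(D) = \sum_{P\in\cE(G)} x(P) = \cZ_{\opname{Ising}}(G,x)$, which is the claim. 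I would also remark that this is essentially Fisher's / Kasteleyn's correspondence, so the exposition can cite~\cite{Kasteleyn-63,Fisher-66} for the underlying idea while keeping the sign argument self-contained.
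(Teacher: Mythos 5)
Your proposal is correct and follows essentially the same route as the paper: fiber $\cD(G^\rK)$ over $\cE(G)$ via the long dimers, check that the short-dimer weights telescope to $x(P)$, and reduce everything to the fact that the crossing-signed count of perfect matchings of an even number of points in convex position equals $+1$ (the paper proves this by an involution swapping the partners of two adjacent clique vertices, while your Pfaffian-of-the-all-ones-skew-matrix argument, or the induction fixing the partner of a distinguished point, is an equivalent classical proof, and it does apply verbatim to any sub-clique since a subset of convexly placed points is still in convex position). The one thing to fix in the write-up is the convention you waver on: the even subgraph associated to $D$ must be the \emph{complement} of the long dimers, $P=G\setminus D_G$, so that the short dimers match exactly the clique vertices corresponding to edges of $P$ and carry total weight $\prod_{e\in P}(x_e^{1/2})^2=x(P)$ --- your first description (long dimers forced over the edges of $P$) would instead produce the weight $x(E(G)\setminus P)$ and is not the map you want.
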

\begin{proof}
Given a dimer configuration~$D\in\cD(G^\rK)$, let~$D_G$ denote the subgraph of~$G$ consisting of the edges of~$G$ corresponding to the long edges of~$D$.
Note that~$G\setminus D_G$ is an even subgraph of~$G$; therefore, the assignment~$D\mapsto G\setminus D_G$ defines a map~$\rho\colon\cD(G^\rK)\to\cE(G)$.
Note also that~$D\in\cD(G^\rK)$ is mapped to~$P\in\cE(G)$ if and only if its short edges match the vertices of~$G^\rK$ corresponding to edges of~$P$.
In other words, the set~$\rho^{-1}(P)$ is in bijection with~$\prod_{v\in V}\cD(K_{d(v,P)})$, where~$d(v,P)=2n(v,P)$ is the degree of~$v$ in~$P${; here, for any $n\ge 1$, the symbol~$K_{2n}$ denotes the complete graph of size $2n$.} This also shows the identity~$x^\rK\!(D)=x(\rho(D))$ for all~$D\in\cD(G^\rK)$. Hence, for any $P\in\cE(G)$, we have
\[
\sum_{D\in\rho^{-1}(P)}(-1)^{\rt(D)}x^\rK\!(D)= \Big(\prod_{v\in V}\sum_{D_v\in\cD_{2n(v,P)}}(-1)^{\rt(D_v)}\Big)\cdot x(P)\,.
\]

Further, it is easy to see that, for any~$n\ge 1$, one has~$\sum_{D\in\mathcal{D}(K_{2n})}(-1)^{\rt(D)}=1$. Indeed, let us fix two adjacent vertices of $K_{2n}$ and consider the involution~$\varsigma\colon\mathcal{D}(K_{2n})\to\mathcal{D}(K_{2n})$ given by exchanging them. The set of fixed points~$\mathrm{Fix}(\varsigma)$
consists of the dimer configurations matching these two vertices. Since~$\varsigma$ is a bijection and~$\rt(\varsigma(D))=\rt(D)+1$ for all~$D\notin\mathrm{Fix}(\varsigma)$, we have
\[
\sum_{D\in\mathcal{D}_{2n}}(-1)^{\rt(D)}=\sum_{D\in\mathrm{Fix}(\varsigma)}(-1)^{\rt(D)}=\sum_{D'\in\mathcal{D}_{2n-2}}(-1)^{\rt(D')}=\dots = 1
\]
by induction over~$n\ge 1$. Therefore,~$\sum_{D\in\rho^{-1}(P)}(-1)^{\rt(D)}x^\rK\!(D)=x(P)$ for any~$P\in\cE(G)$, and we complete the proof by the summation over all configurations~$P\in\cE(G)$.
\end{proof}

In order to handle the signs~$\veps_{e,e'}$ given by~\eqref{eqn:veps-def}, we shall also need the following well-known fact, traditionally attributed to Whitney~\cite{Whitney}, whose easy proof we include for completeness.

\begin{lemma}
\label{lemma:Whitney}
Let~$C$ be an oriented piecewise smooth planar closed curve, and let~$\wind(C)$ denote the total rotation angle of its velocity vector.
If~$C$ is in general position, i.e. if all of its self-intersections are transverse double points, then
\[
-\exp[\textstyle\frac{i}{2}\wind(C)]=(-1)^{\rt(C)},
\]
where~$\rt(C)$ denotes the number of these self-intersections.
\end{lemma}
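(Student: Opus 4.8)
The plan is to reduce the statement to the well-known Whitney formula for the winding number of a \emph{generic immersed} closed curve in the plane, and to prove that case by a surgery argument at self-intersections. First I would recall the classical formulation: for a generic (only transverse double points) immersed closed curve $C$ in the plane, the turning number $\tfrac{1}{2\pi}\wind(C)$ is congruent modulo $2$ to $1+\rt(C)$, i.e. the number of double points. Exponentiating, this reads $\exp[\tfrac{i}{2}\wind(C)] = (-1)^{1+\rt(C)} = -(-1)^{\rt(C)}$, which is exactly the claimed identity. So the real content is the mod-$2$ congruence $\tfrac{1}{2\pi}\wind(C)\equiv 1+\rt(C)\pmod 2$.

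To prove this congruence I would argue by induction on the number $\rt(C)$ of self-intersections. The base case $\rt(C)=0$ is a simple closed curve, where the Umlaufsatz (Hopf's theorem) gives $\wind(C)=\pm 2\pi$, so $\tfrac{1}{2\pi}\wind(C)=\pm 1\equiv 1\pmod 2$, matching $1+\rt(C)=1$. For the inductive step, pick a self-intersection point $p$ of $C$ and perform the oriented smoothing (splitting) of $C$ at $p$: the curve $C$ is replaced by either one curve $C'$ or two curves $C_1',C_2'$ (depending on the combinatorics at $p$), each generic with a total of $\rt(C)-1$ double points between them (the $\rt(C)-1$ other double points of $C$ redistribute, and $p$ itself disappears, possibly creating no new crossings if we do the smoothing compatibly with the orientation, keeping the resolution close to $p$). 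The key geometric observation, standard in this kind of argument, is that smoothing at a transverse double point changes the total winding $\sum\wind$ by exactly $\pm\pi$ when the resolution produces a single curve — wait, more carefully: the oriented resolution respecting orientations either merges two arcs of a single traversal or splits one traversal into two loops, and in either case $\sum \wind$ changes by $\pm 2\pi$ (an integer number of full turns is added or removed because the local picture near $p$ is replaced by two disjoint smooth arcs, and one can compare turning angles arc by arc). Thus $\tfrac{1}{2\pi}\wind$ changes parity by... I need to be careful here.

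Let me reorganize the induction so the bookkeeping is transparent: I would track the quantity $N(C) := \tfrac{1}{2\pi}\wind(C) - \rt(C) - (\#\text{components of }C)$ and show it is always even, equivalently $\equiv 0 \pmod 2$; for a connected curve $\#\text{components}=1$ and evenness of $N$ is exactly the desired statement. Under oriented smoothing at one double point, $\rt$ drops by $1$, the number of components changes by $\pm 1$, and $\tfrac{1}{2\pi}\wind$ changes by an even integer (this is the one computation I would actually carry out: near $p$ the "figure-eight/two-strand" local model is replaced by two parallel strands, and away from $p$ the velocity directions are unchanged, so the change in total turning is $2\pi$ times an integer; comparing the two local pictures shows this integer has the same parity as the change $\mp 1$ in the component count, so $\tfrac{1}{2\pi}\wind - \#\text{components}$ changes by an even integer). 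Hence $N(C)$ changes by an even integer under each smoothing, and after $\rt(C)$ smoothings we reach a disjoint union of simple closed curves, each contributing $\tfrac{1}{2\pi}\wind = \pm 1$, $\rt = 0$, $\#\text{components}=1$, so $N = \pm 1 - 0 - 1 \in \{0,-2\}$, which is even. Therefore $N(C)$ is even for the original $C$, which is the claim.

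The main obstacle is getting the local parity computation at a smoothing right — precisely, verifying that the change in $\tfrac{1}{2\pi}\wind$ and the change $\pm 1$ in the number of connected components have the same parity, so that their difference is even. This is a purely local planar picture (two incoming and two outgoing directions at a transverse double point, and the orientation-compatible resolution), but it requires care to handle all the cases of how the two strands through $p$ sit within the global traversal order of $C$. Everything else (Umlaufsatz for the base case, the fact that winding is additive over the pieces, termination of the induction) is routine.
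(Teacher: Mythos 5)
Your overall strategy is sound and is essentially the paper's proof unrolled: the paper smooths \emph{all} the crossings at once into a union~$C'$ of simple closed curves, observes that $\wind(C')=\wind(C)$, that each simple component contributes a factor~$-1$ to $\exp[\frac{i}{2}\wind]$, and that the number~$m$ of components has the parity of $\rt(C)+1$ because each smoothing changes it by~$\pm 1$; your induction on $\rt(C)$ with the invariant $N(C)$ is the same bookkeeping carried out one crossing at a time. The reduction to the mod-$2$ congruence, the base case via the Umlaufsatz, and the termination of the induction are all fine.

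The genuine problem sits in the one local computation you yourself single out as the crux, and your two statements of it contradict each other. You first assert (correctly) that $\frac{1}{2\pi}\wind$ changes by an even integer under an oriented smoothing, but then ``justify'' this by claiming that this integer has the same parity as the change $\mp 1$ in the number of components --- which would make it odd --- and you deduce that $\frac{1}{2\pi}\wind-\#\mathrm{comp}$ changes by an even integer. That deduction is false, and if it were true the induction would break: since $\rt$ drops by exactly~$1$, one has $\Delta N=\Delta\bigl(\frac{1}{2\pi}\wind-\#\mathrm{comp}\bigr)+1$, which would then be odd. The correct local fact is simpler than any parity comparison: the orientation-preserving smoothing replaces the two straight passages through the double point (tangent directions $u$ and $v$) by two arcs turning from $u$ to $v$ and from $v$ to $u$, i.e.\ by $+\theta$ and $-\theta$ where $\theta$ is the crossing angle, so the total rotation angle is \emph{unchanged}: $\Delta\wind=0$. (This is exactly why the paper can write $\wind(C)=\wind(C')$ with no correction term.) With $\Delta\bigl(\frac{1}{2\pi}\wind\bigr)=0$, $\Delta\rt=-1$ and $\Delta\#\mathrm{comp}=\pm1$, one gets $\Delta N\in\{0,2\}$ and your induction goes through. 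Note also that the change in $\wind$ is a purely local quantity near the double point and cannot ``see'' whether the smoothing merges or splits components, which is a global matter, so the parity comparison you propose is not even well-posed as a local statement.
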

\begin{proof}
Consider the union~$C'$ of oriented simple closed curves obtained by smoothing out all of the self-intersection points of~$C$ as follows:
\includegraphics[height=0.4cm]{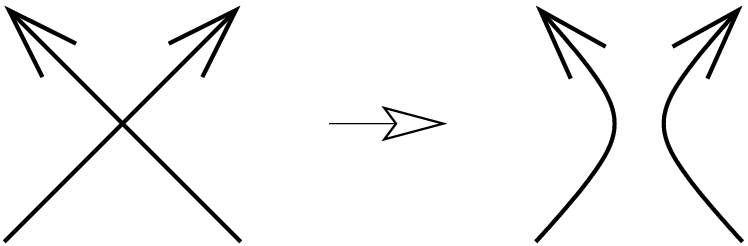}. {Letting $\wind(C')$ be the sum of the total rotation angles of these curves and since the total rotation angle of a simple closed curve is either~$2\pi$ or~$-2\pi$, we have}
\[
\exp[\textstyle\frac{i}{2}\wind(C)]=\exp[\textstyle\frac{i}{2}\wind(C')]=(-1)^{m}\,,
\]
where~$m$ denotes the number of components of~$C'$. The lemma now follows from the fact that~$m$ has the same parity as~$\rt(C)+1$ {since each of the smoothing operations used above to construct~$C'$ changes the number of components in~$C$ by~$\pm 1$.}
\end{proof}

\subsection{Proofs of Theorem~\ref{thm:KW1} and Theorem~\ref{thm:multipoint}}
\label{sub:proof}
\begin{proof}[Proof of Theorem~\ref{thm:KW1}]
Expanding the Pfaffian of~$\wh{\rK}$ leads to
\begin{equation}
\label{eqn:PfK=}
\Pf{\wh{\rK}}~=~\sum_{D\in\cD(G^\rK)}\veps(D)x^\rK\!(D)\,,\quad\text{with}\quad
\veps(D)={\sign(\sigma)\cdot}\veps_{\sigma(1)\sigma(2)}\dots\veps_{\sigma({2N-1})\sigma({2N})}\,,
\end{equation}
where~$N=|E(G)|$ and~$\sigma\in \cS(V(G^\rK))$ is any permutation representing the matching~$D$ {(i.e. such that given an ordering of the vertices, for all $i\in\{1,\ldots,N\}$, the dimers in $D$ are the edges of the form $\{\sigma(2i-1),\sigma(2i)\}$)}; note that~$\veps(D)$ does not depend on the choice of~$\sigma$ provided that some numbering of the set $V(G^\rK)\cong \{1,\dots,2N\}$ is fixed once and for all. Let $D_0\in \cD(G^\rK)$ be the standard reference matching consisting of long edges only. Comparing \eqref{eqn:PfK=} with Lemma~\ref{lemma:terminal}, we see that the following claim directly implies Theorem~\ref{thm:KW1}. (In the terminology of Tesler~\cite{Tesler}, this amounts to checking that the signs~$\veps_{e,e'}$ define a {\em crossing orientation\/} on the terminal graph~$G^\rK$.)

\smallskip

\noindent{\em Claim~A. For any~$D\in\cD(G^\rK)$, one has~$\veps(D)=(-1)^{\rt(D)}\veps(D_0)$.}

\smallskip

\noindent Indeed, one easily deduces from Lemma~\ref{lemma:terminal} and Claim~A that
\begin{equation}
\label{eqn:x-Z=epsPf}
\cZ_{\opname{Ising}}(G,x)=\!\!\!\sum_{D\in\cD(G^\rK)}(-1)^{\rt(D)}x^\rK\!(D) =\veps(D_0)\!\!\!\sum_{D\in\cD(G^\rK)}\veps(D)x^\rK\!(D)=\veps(D_0)\Pf{\wh{\rK}}\,.
\end{equation}

\smallskip

\noindent{\em Proof of Claim A.} Given two dimer configurations~$D,D_0\in\cD(G^\rK)$, their symmetric difference~$D\btu D_0$ is a union of~$\ell\ge 0$ vertex disjoint {(on~$G^\rK$)} cycles~$C_j$ of even length. Moreover, due to the particular choice of $D_0$, each $C_j$ is composed of \emph{alternating} short and long edges of $G^\rK$. Let us choose representatives~$\sigma,\sigma_0\in \cS(V(G^\rK))$ of~$D,D_0$ such that~$\sigma\circ\sigma^{-1}_0$ is the rotation by one edge of each of these cycles, with respect to some arbitrary but fixed orientation. Using this particular choice of representatives, we find
\[
\veps(D)\veps(D_0)= {\sign(\sigma)\sign(\sigma_0)\,\cdot} \prod_{j=1}^\ell \omega(C_j)= \prod_{j=1}^\ell (-\omega(C_j))\,,
\]
where~$\omega(C_j)$ denotes the product of the coefficients~$\exp[\frac{i}{2}\rw(\overline{e},e')]$ along the short edges of~$C_j$ {(the factors of $i$ and $-i$ from~\eqref{eqn:veps-def} contribute in total $1$ because of the alternation of long and short edges)}. Relating this product with the total rotation angle~$\wind(C_j)$ of the velocity vector of (a~smoothed version of)~$C_j$ and applying Lemma~\ref{lemma:Whitney} leads to
\[
\prod_{j=1}^\ell(-\omega(C_j))= \prod_{j=1}^\ell(-\exp[{\textstyle\frac{i}{2}}\wind(C_j)]) = \prod_{j=1}^\ell (-1)^{\rt(C_j)}=(-1)^{\rt(D)}\,,
\]
since all the intersections and self-intersections of $C_j$ are produced by \emph{short} edges, which all belong to $D$, and each pair of different cycles intersects an even number of times.
\end{proof}

\begin{proof}[Proof of Theorem~\ref{thm:multipoint}]
Let us write~$\rE:=\{e_1,\dots,e_{2n}\}\subset V(G^\rK)$ and denote by $G^\rK_\rE$ the subgraph of the terminal graph~$G^\rK$ obtained by removing all vertices~$e\in\rE\subset V(G^\rK)$, together with adjacent edges. Fix some numberings of the sets $V(G^\rK)$,~$V(G^\rK_\rE)$, and
denote by~$\nu_\rE$ the permutation of the ordered set~$V(G^\rK)\cong \{1,\dots,2N\}\cong V(G^\rK_\rE)\sqcup\rE$ induced by the trivial identification of~$V(G^\rK)$ and~$V(G^\rK_\rE)\cup \rE$. Using the Pfaffian identity
\[
\Pf{\wh{\rK}^{-1}_{e,e'}}_{e,e'\in\rE}~=~(-1)^{n}{\sign(\nu_\rE)\,\cdot}\Pf{\wh{\rK}_{e,e'}}_{e,e'\notin\rE}\cdot (\Pf{\wh{\rK}})^{-1}
\]
and~\eqref{eqn:x-Z=epsPf}, we only need to check the equality
\[
(-1)^{n}{\sign(\nu_\rE)}\veps(D_0)\!\!\sum_{P\in\mathcal{C}(\rE)}\tau(P)x(P) ~=~ \Pf{\wh{\rK}_{e,e'}}_{e,e'\notin\rE}\,,
\]
where the sign~$\veps(D_0)$ of the standard reference matching on~$G^\rK$ is given by~\eqref{eqn:veps-def} {and~\eqref{eqn:PfK=}}. We shall do so by generalizing the proof of Theorem~\ref{thm:KW1} given above. Observe that Lemma~\ref{lemma:terminal} (which deals with the case $n=0${, i.e. $\rE=\emptyset$}) extends in a straightforward way, yielding the equation
\[
\sum_{P\in\mathcal{C}(\rE)}\tau(P)x(P)~=\sum_{D\in\cD(G^\rK_\rE)}\tau(\rho_\rE(D))(-1)^{\rt(D)}x^\rK(D)\,,
\]
where the configuration~$P=\rho_\rE(D)\in\mathcal{C}(\rE)$ is obtained from~$E(G^\diamondsuit_\rE)$ by removing all edges corresponding to long dimers of~$D$ as well as the half-edges~$\{(o(e),z_e)\}_{e\in\rE}$\,. At the same time,
\[
\Pf{\wh{\rK}_{e,e'}}_{e,e'\notin\rE}~=\sum_{D\in\cD(G^\rK_\rE)}\veps_\rE(D)x^\rK(D)\,,
\]
where the sign~$\veps_\rE(D)$ of a dimer configuration on~$G^\rK_\rE$ is defined similarly to~\eqref{eqn:veps-def} according to the fixed ordering of~$V(G^\rK_\rE)$. Thus we are left with the proof of the following claim, which generalizes Claim~A from the proof of Theorem~\ref{thm:KW1}.

\smallskip

\noindent{\em Claim~B. For any~$D\in\cD(G^\rK_\rE)$, one has $\veps_\rE(D)=\tau(\rho_\rE(D))(-1)^{\rt(D)}\cdot(-1)^{n}{\sign(\nu_\rE)}\veps(D_0)$.}

\smallskip

\noindent {\em Proof of Claim~B.} Given a dimer configuration $D\in\cD(G^\rK_E)$, the symmetric difference~$D\btu D_0$ consists of a union of~$\ell$
vertex disjoint cycles~$C_j$ of even length, together with~$n$ vertex disjoint paths~$\gamma_k$ of odd length matching the vertices~$e\in\rE\subset V(G^\rK)$. (Note that the paths~$\gamma_k$ start and end with \emph{long} edges~$(e,\overline{e})$ of $G^\rK$, and if both~$e,\overline{e}\in\rE$, then one of these paths is just the single long edge~$(e,\overline{e})$.) Moreover, one can choose representatives~$\sigma\in \cS(V(G^\rK_\rE))$ of~$D$,~$\sigma_0\in \cS(V(G^\rK))$ of~$D_0$, and~$s\in \cS(\rE)$ of the corresponding matching of~$\rE$ such that the permutation
\[
V(G^\rK)\stackrel{\sigma_0^{-1}}{\longrightarrow} V(G^\rK)\cong\{1,\dots,2N\}\stackrel{\nu_\rE}{\longrightarrow}\{1,\dots,2N\}\cong V(G^\rK_\rE)\sqcup\rE\stackrel{\sigma\sqcup s}{\longrightarrow}V(G^\rK)
\]
is the rotation by one edge of each of the cycles~$C_j$ and of each of the paths~$\gamma_k$ closed up by an artificial link oriented from~$e_{s(2k-1)}$ to~$e_{s(2k)\vphantom{1}}$, thus~$\gamma_k$ is always oriented in a \emph{backward} direction. Since all of these cycles are of even length, the diagram above implies the equality
{\[
\sign(\sigma_0)\sign(\nu_\rE)\sign(\sigma)\sign(s)=(-1)^{\ell+n}.
\]}
Computing the signs~$\veps_\rE(D)$ and~$\veps(D_0)$ using this particular choice of representatives and the fact that all the~$C_j$ and~$\gamma_k$ are formed by {alternating} long and short edges of~$G^\rK$ (this is a consequence of the particular choice of the reference matching~$D_0$), we get
\[
\veps_\rE(D)\veps(D_0)~=~{\sign(\sigma)\sign(\sigma_0)\,\cdot}\prod_{j=1}^\ell\omega(C_j)\cdot \prod_{k=1}^n ({ i\eta_{e_{s(2k-1)}}^{\vphantom{|}}\overline{\eta}_{e_{s(2k\vphantom{1})}}}\omega(\overleftarrow{\gamma_k}))\,,
\]
where $\omega(C_j)$ and~$\omega(\overleftarrow{\gamma_k})$ denote the products of the coefficients~$\exp[\frac{i}{2}\rw(\overline{e},e')]$ along the short edges of~$C_j$ and~$\gamma_k$, respectively, with the paths~$\gamma_k$ being {traversed} from~$e_{s(2k)\vphantom{1}}$ to~$e_{s(2k-1)}$.

Let us shorten the extremities of the paths~$\gamma_k$ so that they link the points~$z_e\in V(G^{\diamondsuit})$ instead of~$e\in \rE\subset V(G^\rK)$.
Similarly to the proof of Theorem~\ref{thm:KW1}, we have
\begin{align*}
\omega(C_j)&=\exp[{\textstyle\frac{i}{2}}\wind(C_j)]=(-1)^{\rt(C_j)+1}\,,\\
{\omega(\overleftarrow{\gamma_k})} & =
\exp[{\textstyle\frac{i}{2}}\wind(\overleftarrow{\gamma_k})]=\exp[-{\textstyle\frac{i}{2}}\wind(\gamma_k)]\,.
\end{align*}
Denote
\begin{equation}\label{eqn:taugammak}
\tau(\gamma_k):={i\eta_{e_{s(2k-1)}}^{\vphantom{|}}\overline{\eta}_{e_{s(2k\vphantom{1})}}}{\omega(\overleftarrow{\gamma_k})}\,.
\end{equation}
Combining all the computations given above, we reduce Claim~B to the following statement.

\smallskip

\noindent {\em Claim~C. For any~$D\in\cD(G^\rK_\rE)$, if~$D\btu D_0$ consists of cycles~$C_j$ and paths~$\gamma_k$, then
\begin{equation}
\label{eqn:claim-tau}
{\sign(s)\,\cdot}\prod_{k=1}^n\tau(\gamma_k)\cdot (-1)^{\rt(D)-\sum_{j=1}^\ell\rt(C_j)}~=~\tau(\rho_\rE(D))\,.
\end{equation}}

Recall that long edges of~$D\btu D_0$ correspond to edges of~$P=\rho_\rE(D)\in\cC(\rE)$ while short edges of~$D\btu D_0$ define a decomposition of~$P$ into cycles~$C_j$ and paths~$\gamma_k$. In particular, if all~$C_j$ and~$\gamma_k$ do not intersect or self-intersect, then~$\rt(D)=\rt(C_j)=0$ and~\eqref{eqn:claim-tau} coincides with \emph{definition}~\eqref{eqn:tau-def} of~$\tau(P)$. Essentially, Claim~C states that the left-hand side of~\eqref{eqn:claim-tau} does not depend on the choice of $D\in\rho_\rE^{-1}(P)$, which also implies that the sign~$\tau(P)$ is well-defined (i.e. independent of the smoothing of~$P$).

\noindent {\em Proof of Claim~C.} {Let $P\in \cC(E)$ and let us fix some non-intersecting smoothing of~$P$ into~cycles~$C_j^0$ and~paths~$\gamma_k^0$ and compute the sign $\tau(P)$ by \eqref{eqn:tau-def} using these paths. Note that the result does not depend on their numbering and orientations. Given~$D\in\rho_\rE^{-1}(P)$, we number $\gamma_k^0$ and choose} their orientations (from~$e_{s^0(2k-1)}$ to~$e_{s^0(2k\vphantom{1})}$) so that
{\[
\{s^0(1),s^0(3),\dots,s^0(2n\!-\!1)\}\cup\{s(1),s(3),\dots,s(2n\!-\!1)\}=\rE\,.
\]}
We now push each path~$\gamma_k$ slightly to its right and denote the result by~$\gamma_k^+$. If we consider the union of all~$\gamma_k^+$ and all~$\gamma_k^0$ and match their endpoints by $2n$ counterclockwise $180^\circ$\!--turns, the result is a collection~$\Delta$ of~$m\le n$ oriented closed curves~$\Delta_j$. It is easy to see that
\[
{\sign(s^0)\sign(s)}=(-1)^{m},
\]
which leads{, by~\eqref{eqn:tau-def} and~\eqref{eqn:taugammak},} to
\begin{align*}
\tau(P)\cdot{\sign(s)\,\cdot}\prod_{k=1}^n\tau(\gamma_k) ~ & = ~{(-1)^m\cdot \prod_{k=1}^n (i\exp[-{\textstyle\frac{i}{2}}\wind(\gamma_k^0)])\cdot \prod_{k=1}^n(i\exp[-{\textstyle\frac{i}{2}}\wind(\gamma_k)])} \\
& = ~ (-1)^m \exp[-{\textstyle\frac{i}{2}}\wind(\Delta)]\,.
\end{align*}
Applying Lemma~\ref{lemma:Whitney} to each~$\Delta_j$, we are left with the proof of the following fact:
\[
(-1)^{\rt(D)-\sum_{j=1}^\ell\rt(C_j)}= (-1)^{\sum_{j=1}^m\rt(\Delta_j)}.
\]
Note that this is equivalent to the equality
\begin{equation}
\label{eqn:x-claim-intersections}
\rt(D)-\rt(C)=\rt(\Delta)\mod~2\,,
\end{equation}
{where $C=\bigsqcup_{j=1}^\ell C_j$}, since the total number of intersections of the closed curves~$C_j$ with each other is even due to topological reasons and the same is true for $\Delta_j$.

We need some additional notation. Similarly to~$\gamma_j$, let us push each~$C_j$ slightly to its right and denote the result by $C_j^+$. Let
\[
\textstyle \gamma:=\bigsqcup_{k=1}^n\gamma_k\,,\quad \gamma^+:=\bigsqcup_{k=1}^n\gamma_k^+\quad {\text{and}\quad C^+:=\bigsqcup_{j=1}^\ell C^+_j\,}.
\]
It is easy to see that
\[
(\gamma^+\!\sqcup C^+)\cdot(\gamma\sqcup C)=0\mod~2\,,
\]
where we denote by~$\alpha\cdot\beta$ the number of intersections of (the collections of) curves~$\alpha$ and~$\beta$. Indeed, as~$\gamma\sqcup C=D\btu D_0$, all these intersections come from the intersections of short dimers in~$D$ and each pair of such dimers contributes \emph{two} intersections to~$(\gamma^+\!\sqcup C^+)\cdot(\gamma\sqcup C)$. {Since the collection of loops and paths $\gamma\sqcup C$ can be deformed to the configuration~$P=\rho_\rE(D)\in\cC(G^\diamondsuit)$ and then further to its smoothing~$\gamma^0\!\sqcup C^0$}, we conclude that
\[
(\gamma^+\!\sqcup C^+)\cdot(\gamma^0\!\sqcup C^0)=0\mod~2\,.
\]
As $\gamma^+\!\sqcup\gamma^0$ is essentially a collection of closed curves, we also have
\[
(\gamma^+\!\sqcup \gamma^0)\cdot(C^+\!\sqcup C^0)=0\mod~2\,.
\]
By construction, $\gamma^0$ and $C^0$ do not intersect and the number of intersections of~$C^+$ and~$C^0$ is always even. Therefore, we obtain
\[
\gamma^+\!\cdot C^+ = \gamma^+\!\cdot \gamma^0\mod~2\,.
\]
The last two simple observations are
\[
\rt(D)-\rt(C)=\rt(\gamma)+\gamma\cdot C \quad \text{and}\quad \rt(\Delta)=\rt(\gamma^+)+\gamma^+\!\cdot \gamma^0\,,
\]
where we used the fact that~$\gamma^0$ is non-intersecting. Since $\rt(\gamma)=\rt(\gamma^+)$ and~$\gamma\cdot C=\gamma^+\!\cdot C^+$, the identity~\eqref{eqn:x-claim-intersections} follows, as well as Claim~C and Theorem~\ref{thm:multipoint}.
\end{proof}

\subsection{Proof of Proposition~\ref{prop:spin1}}
\label{sub:spin} In this section we work with the (domain walls expansion of the) Ising model on the dual graph~$G^*$ with~`$+$' boundary conditions, which means that we set the spin of the outer face~$u_{\opname{out}}$ of~$G$ to be~$+1$.

Recall that the matrices~$\KW_{[u_1,\dots,u_m]}$,~$\rK_{[u_1,\dots,u_m]}$ and~$\wh\rK_{[u_1,\dots,u_m]}$ are defined via some (arbitrary but fixed) collection~$\varkappa=\varkappa_{[u_1,..,u_m]}$ of edge-disjoint paths on~$G^*$ linking $u_1,\dots,u_m$ and, possibly,~$u_{\opname{out}}$ so that each of~$u_1,\dots,u_m$ has an odd degree in the union of these paths. Note that if a spin configuration~$\sigma\in\{\pm 1\}^{V(G^*)}$ with $\sigma_{u_{\opname{out}}}=+1$ corresponds to a domain walls configuration~$P\in\cE(G)$, then
\begin{equation}
\label{eqn:prod_s=kappa_P}
\sigma_{u_1}\dots\sigma_{u_m}=(-1)^{\varkappa\cdot P}\,,
\end{equation}
where~$\varkappa\cdot P$ denotes the number of intersections of~$P$ with~$\varkappa$. Below we assume that the terminal graph~$G^\rK$ is drawn in such a way that its long edges intersect~$\varkappa$ if and only if the corresponding edge of~$G$ does the same, while short edges never intersect these ``cuts''.
The following statement generalizes~Lemma~\ref{lemma:terminal}.
\begin{lemma}
\label{lemma:x-terminal}
For any planar graph~$(G,x)$, and any set of edges~$\varkappa\subset E(G^*)$, one has
\[
\sum_{P\in\cE(G)} (-1)^{\varkappa\cdot P}x(P)= (-1)^{|\varkappa|}\!\!\!\sum_{D\in\cD(G^\rK)} (-1)^{\rt(D)}(-1)^{\varkappa\cdot D}x^\rK(D)\,,
\]
where~$|\varkappa|$ denotes the total number of edges in the collection of (dual) paths~$\varkappa=\varkappa_{[u_1,..,u_m]}$\,.
\end{lemma}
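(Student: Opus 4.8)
The plan is to mimic the proof of Lemma~\ref{lemma:terminal}, tracking the extra sign $(-1)^{\varkappa\cdot P}$ through the bijection $\rho\colon\cD(G^\rK)\to\cE(G)$. Recall that $\rho$ sends $D$ to $P=G\setminus D_G$, where $D_G$ is the subgraph of $G$ whose edges are those carrying long dimers of $D$; the fibre $\rho^{-1}(P)$ is in bijection with $\prod_{v\in V}\cD(K_{d(v,P)})$, and $x^\rK(D)=x(P)$ for every $D$ in the fibre. The key observation is that, by the assumed drawing of $G^\rK$, a long edge of $G^\rK$ intersects $\varkappa$ precisely when the corresponding edge of $G$ does, while short edges never intersect $\varkappa$. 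Hence, for $D\in\rho^{-1}(P)$, the dimers of $D$ meeting $\varkappa$ are exactly the long dimers sitting on edges of $D_G=E(G)\setminus P$ that cross $\varkappa$, so
\[
\varkappa\cdot D ~=~ \varkappa\cdot D_G ~=~ |\varkappa|-\varkappa\cdot P \pmod 2,
\]
the last equality because each of the $|\varkappa|$ edges of $\varkappa$ (counted with multiplicity) crosses $G$ an odd number of times in total, and these crossings split between $P$ and $D_G$. Consequently $(-1)^{\varkappa\cdot D}=(-1)^{|\varkappa|}(-1)^{\varkappa\cdot P}$ is constant on each fibre $\rho^{-1}(P)$.

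First I would record this parity identity carefully, being explicit about what ``$\varkappa\cdot P$'' and ``$\varkappa\cdot D$'' mean when $\varkappa$ is a collection of dual paths possibly passing through a given edge more than once; the cleanest formulation is to count, edge by edge of $\varkappa\subset E(G^*)$, whether the primal edge it crosses lies in $P$ or in $D_G$, and to sum mod $2$. The point $\sum_e \mathbbm 1[e^*\in\varkappa]= |\varkappa|$ together with $e\in P$ XOR $e\in D_G$ then gives $\varkappa\cdot P+\varkappa\cdot D\equiv|\varkappa|\pmod 2$ directly. Next I would sum over the fibre: using $x^\rK(D)=x(P)$ and the fibre decomposition,
\[
\sum_{D\in\rho^{-1}(P)}(-1)^{\rt(D)}(-1)^{\varkappa\cdot D}x^\rK(D)
~=~(-1)^{|\varkappa|}(-1)^{\varkappa\cdot P}x(P)\Big(\prod_{v\in V}\sum_{D_v\in\cD(K_{d(v,P)})}(-1)^{\rt(D_v)}\Big).
\]
By the combinatorial identity $\sum_{D\in\cD(K_{2n})}(-1)^{\rt(D)}=1$ already established in the proof of Lemma~\ref{lemma:terminal}, the bracketed product is $1$. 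Finally, summing over all $P\in\cE(G)$ and moving the global $(-1)^{|\varkappa|}$ to the other side yields exactly the claimed identity.

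I do not expect a serious obstacle here: the whole argument is Lemma~\ref{lemma:terminal} plus one bookkeeping lemma about how $\varkappa$ meets $P$ versus $D_G$. The one place that warrants care is the convention governing intersections of $\varkappa$ with $G^\rK$ near a vertex $v$ where $\varkappa$ passes: one must confirm that the chosen planar picture of $G^\rK$ (short edges pushed away from the cuts) really can be realized simultaneously for all cuts in $\varkappa$, and that the resulting count $\varkappa\cdot D$ only picks up long-edge crossings. This is exactly the hypothesis imposed in the sentence preceding the lemma (``$G^\rK$ is drawn in such a way that its long edges intersect $\varkappa$ if and only if the corresponding edge of $G$ does the same, while short edges never intersect these cuts''), so the proof may simply invoke it. The remaining steps are routine and identical in structure to the $n=0$ case.
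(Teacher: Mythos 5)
Your proposal is correct and follows essentially the same route as the paper: the paper likewise reduces everything to the single identity $\varkappa\cdot\rho(D)+\varkappa\cdot D=\varkappa\cdot E(G)=|\varkappa|$, which holds because $e\in\rho(D)$ exactly when the corresponding long edge is absent from $D$, and then reuses the fibrewise computation of Lemma~\ref{lemma:terminal} verbatim. Your extra care about the drawing convention for $G^\rK$ near the cuts is exactly the hypothesis the paper imposes before stating the lemma, so nothing further is needed.
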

\begin{proof}
Following the proof of Lemma~\ref{lemma:terminal}, the only additional fact to check is the identity
\[
(-1)^{\varkappa\cdot\rho(D)}=(-1)^{|\varkappa|}\cdot (-1)^{\varkappa\cdot D}\,.
\]
By construction of the mapping~$\rho$, for each~$e\in E(G)$ and~$D\in\cD(G^\rK)$, we have~$e\in\rho(D)$ if and only if the corresponding long edge of~$G^\rK$ does \emph{not} belong to~$D$. Therefore,
\[
\varkappa\cdot\rho(D) + \varkappa\cdot D  = \varkappa\cdot E(G) = |\varkappa|\,,
\]
{and the claim is proved}.
\end{proof}
We are now able to prove formula~\eqref{eqn:multi-spin-plane} for multi-point spin expectations.
\begin{proof}[Proof of Proposition~\ref{prop:spin1}]
It follows from~\eqref{eqn:prod_s=kappa_P} and Lemma~\ref{lemma:x-terminal} that
\[
\E^+_{G^*}[\sigma_{u_1}\dots\sigma_{u_m}]=\frac{\sum_{P\in\cE(G)} (-1)^{\varkappa\cdot P}x(P)}{\sum_{P\in\cE(G)}x(P)}= (-1)^{|\varkappa|}\cdot \frac{\sum_{D\in\cD(G^\rK)} (-1)^{\rt(D)}(-1)^{\varkappa\cdot D}x^\rK(D)}{\sum_{D\in\cD(G^\rK)} (-1)^{\rt(D)}x^\rK(D)}\,.
\]
It was shown in the proof of Theorem~\ref{thm:KW1} that
\[
\textstyle \Pf{\wh\rK}= \veps(D_0)\sum_{D\in\cD(G^\rK)} (-1)^{\rt(D)}x^\rK(D)\,.
\]
Repeating the same proof, we find
\[
\textstyle \Pf{\wh\rK_{[u_1,..,u_m]}}=\veps(D_0)\sum_{D\in\cD(G^\rK)} (-1)^{\rt(D)}(-1)^{\varkappa\cdot D}x^\rK(D)\,,
\]
thus arriving at~\eqref{eqn:multi-spin-plane}.
\end{proof}

We conclude this section with one last statement which naturally generalizes Theorem~\ref{thm:multipoint}.

\begin{proposition}
\label{prop:multipoint-with-spins}
Let~$u_1,\dots,u_m$ be some faces of~$G$ and the matrix~$\wh\rK_{[u_1,..,u_m]}$ be defined using the collection $\varkappa=\varkappa_{[u_1,..,u_m]}$ of edge-disjoint paths on~$G^*$ linking~$u_1,\dots,u_m$ and~$u_{\opname{out}}$.
For any set of~$2n$ oriented edges~$\rE=\{e_1,..,e_{2n}\}\subset V(G^\rK)$, one has
\[
\Pf{(\wh\rK^{-1}_{[u_1,..,u_m]})_{e_j,e_k}}_{j,k=1}^{2m}~=~ \frac{\sum_{P\in\cC(e_1,\dots,e_{2n})} (-1)^{\varkappa\cdot P}\tau(P)x(P)} {\E^+_{G^*}[\sigma_{u_1}\dots\sigma_{u_m}]\cdot \cZ_{\opname{Ising}}(G,x)}\,.
\]
Above, if some edge~$e_k\in\rE$ intersects~$\varkappa$ and $\overline{e}_k\not\in\rE$, we \emph{do} count the corresponding half-edge~$(z_{e_k},t(e_k))\in P$ in the intersection number~$\varkappa\cdot P$.
\end{proposition}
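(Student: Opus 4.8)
The plan is to run the proofs of Theorem~\ref{thm:multipoint} and Proposition~\ref{prop:spin1} simultaneously, the only genuinely new ingredient being to carry the sign twist $(-1)^{\varkappa\cdot\bullet}$ through the dimer expansion on the terminal graph. First I would apply, verbatim, the Pfaffian minor identity from the proof of Theorem~\ref{thm:multipoint} to the anti-symmetric matrix $\wh{\rK}_{[u_1,..,u_m]}$, obtaining
\[
\Pf{(\wh{\rK}^{-1}_{[u_1,..,u_m]})_{e_j,e_k}}_{j,k=1}^{2n}~=~(-1)^n\sign(\nu_\rE)\cdot\frac{\Pf{(\wh{\rK}_{[u_1,..,u_m]})_{e,e'}}_{e,e'\notin\rE}}{\Pf{\wh{\rK}_{[u_1,..,u_m]}}}\,.
\]
The denominator is already disposed of by the proof of Proposition~\ref{prop:spin1}: combining Lemma~\ref{lemma:x-terminal} with the identity $\Pf{\wh{\rK}}=\veps(D_0)\,\cZ_{\opname{Ising}}(G,x)$ from~\eqref{eqn:x-Z=epsPf} yields
\[
\Pf{\wh{\rK}_{[u_1,..,u_m]}}~=~(-1)^{|\varkappa|}\,\E^+_{G^*}[\sigma_{u_1}\dots\sigma_{u_m}]\cdot\veps(D_0)\,\cZ_{\opname{Ising}}(G,x)\,.
\]

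The substance lies in the numerator. I would expand $\Pf{(\wh{\rK}_{[u_1,..,u_m]})_{e,e'}}_{e,e'\notin\rE}$ as a signed sum over dimer configurations $D\in\cD(G^\rK_\rE)$, drawing $G^\rK$ (as in the proof of Proposition~\ref{prop:spin1}) so that short edges never meet $\varkappa$, while a long edge meets $\varkappa$ exactly when the corresponding edge of $G$ does. Since $\wh{\rK}_{[u_1,..,u_m]}$ and $\wh{\rK}$ differ only in the long-edge entries $\wh{\rK}_{e,\overline{e}}$ with $e$ crossing $\varkappa$, each term of this expansion acquires exactly the factor $(-1)^{\varkappa\cdot D}$ relative to the term $\veps_\rE(D)x^\rK(D)$ occurring in the proof of Theorem~\ref{thm:multipoint}; feeding in Claim~B there, namely $\veps_\rE(D)=\tau(\rho_\rE(D))(-1)^{\rt(D)}(-1)^n\sign(\nu_\rE)\veps(D_0)$, gives
\[
\Pf{(\wh{\rK}_{[u_1,..,u_m]})_{e,e'}}_{e,e'\notin\rE}~=~(-1)^n\sign(\nu_\rE)\veps(D_0)\!\!\!\sum_{D\in\cD(G^\rK_\rE)}\!\!\!(-1)^{\varkappa\cdot D}\,\tau(\rho_\rE(D))\,(-1)^{\rt(D)}\,x^\rK(D)\,.
\]

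It then remains to pass from dimers on $G^\rK_\rE$ to configurations $P\in\cC(e_1,\dots,e_{2n})$, which I would do by the common generalisation of the fibre-summation of Lemma~\ref{lemma:x-terminal} and of the extension of Lemma~\ref{lemma:terminal} to $\cC(e_1,\dots,e_{2n})$ used in the proof of Theorem~\ref{thm:multipoint}: the twist $(-1)^{\varkappa\cdot D}$ is constant along each fibre $\rho_\rE^{-1}(P)$ (configurations in a fibre differ only in short edges, which avoid $\varkappa$), so the cancellation $\sum_{D\in\cD(K_{2n})}(-1)^{\rt(D)}=1$ still applies term by term and reduces the sum to $\sum_{P}(-1)^{\varkappa\cdot\rho_\rE(D)}\tau(P)x(P)$. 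The last comparison of $\varkappa\cdot D$ with $\varkappa\cdot P$, $P=\rho_\rE(D)$, is exactly as in Lemma~\ref{lemma:x-terminal}: a long edge $e$ lies in $D$ iff $e\notin P$, so for an edge $e$ meeting neither $\rE$ nor its reverse the two half-edges $(o(e),z_e),(z_e,t(e))$ of $G^\diamondsuit$ are jointly in $P$ or jointly absent and contribute $\varkappa\cdot e$ or $0$; for $e_k\in\rE$ with $\overline{e}_k\notin\rE$ only $(z_{e_k},t(e_k))$ survives in $P$ — precisely the half-edge named in the statement — while for a pair $e_k,\overline{e}_k\in\rE$ neither half-edge of $e_k$ is present and no long dimer sits on $e_k$. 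With the midpoints $z_{e_k}$, $e_k\in\rE$, placed near $o(e_k)$ so that $\varkappa$ misses the suppressed half-edges, a mod-$2$ intersection count then gives $(-1)^{\varkappa\cdot D}=(-1)^{|\varkappa|}(-1)^{\varkappa\cdot P}$. Assembling the three pieces, the factors $(-1)^n\sign(\nu_\rE)$ and $\veps(D_0)$ each occur squared and drop out, the $(-1)^{|\varkappa|}$ from the denominator cancels the one just produced, and what is left is the asserted ratio.

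The step I expect to demand the most care is this final mod-$2$ intersection count: it is where the slightly awkward convention of the statement (count $(z_{e_k},t(e_k))\in P$, ignore $(o(e_k),z_{e_k})\notin P$) is forced upon us, and where one must verify that the reference matching $D_0$, the permutation $\nu_\rE$ and the cut $\varkappa$ are mutually compatible so that the half-edge bookkeeping on $G^\diamondsuit$ near the marked edges reproduces the constant $(-1)^{|\varkappa|}$ exactly, with no stray sign surviving. Everything else is a transcription of the two proofs already given.
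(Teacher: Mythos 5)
Your proof is correct and follows essentially the same route as the paper's: the paper likewise reduces the claim to the Pfaffian-minor identity and Claim~B from the proof of Theorem~\ref{thm:multipoint}, a $\varkappa$-twisted version of Lemma~\ref{lemma:x-terminal} for the map $\rho_\rE$ (the twist $(-1)^{\varkappa\cdot D}$ being carried only by long dimers, so the sign checks of Claims~B and~C are unaffected), and the identity $\E^+_{G^*}[\sigma_{u_1}\dots\sigma_{u_m}]\cdot\cZ_{\opname{Ising}}(G,x)=(-1)^{|\varkappa|}\veps(D_0)\Pf{\wh\rK_{[u_1,..,u_m]}}$ from the proof of Proposition~\ref{prop:spin1}. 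The only difference is one of presentation: you make explicit the half-edge bookkeeping near the marked edges that the paper leaves implicit in the phrase ``mimicking the proof of Lemma~\ref{lemma:x-terminal}''.
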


\begin{proof} Mimicking the proof of Lemma~\ref{lemma:x-terminal} for the mapping~$\rho_\rE:\cD(G^\rK_\rE)\to\cC(\rE)$, one gets
\[
\sum_{P\in\cC(e_1,\dots,e_{2n})} (-1)^{\varkappa\cdot P}\tau(P)x(P) ~ =~(-1)^{|\varkappa|}\!\!\!\sum_{D\in\cD(G^\rK_\rE)}(-1)^{\rt(D)}\tau(\rho_\rE(D))(-1)^{\varkappa\cdot D}x^\rK(D)\,.
\]
On the other hand, from the proof of formula~\eqref{eqn:multi-spin-plane} given above we know that
\begin{align*}
\E^+_{G^*}[\sigma_{u_1}\dots\sigma_{u_m}]\cdot \cZ_{\opname{Ising}}(G,x)~&=~(-1)^{|\varkappa|}\!\!\!{\sum_{D\in\cD(G^\rK)} (-1)^{\rt(D)}(-1)^{\varkappa\cdot D}x^\rK(D)} \\
&=~(-1)^{|\varkappa|}\veps(D_0)\Pf{\wh\rK_{[u_1,..,u_m]}}\,,
\end{align*}
which is a proper replacement for~\eqref{eqn:x-Z=epsPf} in the twisted setup. We now simply follow the proof of Theorem~\ref{thm:multipoint} with the weights~$x^\rK(D)$ replaced by~$(-1)^{\varkappa\cdot D}x^\rK(D)$.
It is worth noting that the check of signs performed in Claim~B (and further Claim~C) does not depend on~$x^\rK$.
\end{proof}


\setcounter{equation}{0}
\section{Various formalisms}
\label{sec:links}

This section is devoted to a self-contained exposition of the relations between several classical approaches designed to study the planar Ising model: dimer representations~\cite{Kasteleyn-63,Fisher-66}, Grassmann variables~\cite{Hurst-Green,Lieb-fermions}, disorder insertions~\cite{Kadanoff-Ceva}, as well as a more recent language of `combinatorial' s-holomorphic observables~\cite{smirnov-icm-2006,Smirnov-10,smirnov-icm-2010}. Of course, most if not all claims below are part of the folklore surrounding the Ising model. However, we hope that such an exposition, intended in particular for combinatorialists and probabilists, will be useful in view of the renewed activity in the field. We also refer the reader interested in a more advanced discussion of spin-disorder techniques to the recent papers~\cite{dub-abelian,Dub} by Dub\'edat.

\subsection{Dimer representations} \label{sub:planar-dimers}
In Section~\ref{sec:planar}, one of the main tools used was a dimer representation of the Ising model on the non-bipartite and, in general, \emph{non-planar} terminal graph~$G^\rK$. In a famous variation on this construction, Fisher introduced a \emph{planar} but non-bipartite graph such that the Ising configurations were in $1$--to--$1$ correspondence with dimers. His mapping may be described, paraphrasing his own words~\cite[p. 1777]{Fisher-66}, as follows: one starts with the graph $G$ and makes all its vertices trivalent (without changing the probability measure) and then uses the mapping of the configurations to dimers on the terminal graph of this new graph, which in virtue of trivalence, is planar and the dimer measure is unsigned. A more symmetric version of this construction was later proposed in~\cite{BdT-Z} forcing the mapping from configurations to dimers to be $2^{|V(G)|}$--to--$1$. Yet another (slighly simpler) variation on Fisher's idea was proposed in~\cite{Dub}, where long dimers are in direct correspondence with Ising configurations, and this is the one we shall use here, calling the corresponding graph the Fisher graph.

In this section we briefly discuss representations of the Ising model on~$G$ via dimers on the two following graphs: the corner graph~$G^\rC$ and the Fisher graph~$G^\rF$ discussed above, the latter being \emph{planar} but also never bipartite, see Fig.~\ref{Fig:Corner-Fisher}.
It is worth noting that there exist several combinatorial ways to represent a \emph{pair} of independent Ising models via \emph{single} dimers on some other graph constructed from~$G$, which is both planar and bipartite (see~\cite{Dub,BdT-xor,BdT4}), but we do not discuss these constructions here.

\begin{figure}
\captionsetup[subfigure]{position=below,justification=justified,singlelinecheck=false,labelfont=bf}
\begin{subfigure}[t]{.49\textwidth}
\labellist\small\hair 2.5pt
\pinlabel {$e$} at 220 120
\endlabellist
\centering{\psfig{file=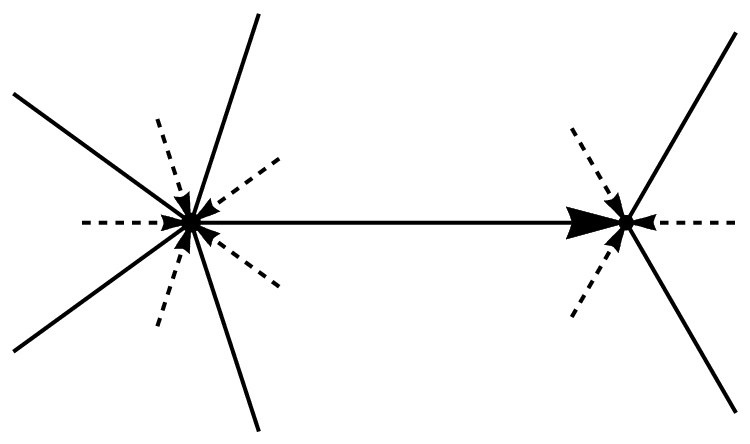,width=.9\linewidth}}
\caption{A portion of a graph~$G$ around an edge $e$, with decorations at corners (dashed)}
\end{subfigure}
\hfill
\begin{subfigure}[t]{.48\textwidth}
\labellist\small\hair 2.5pt
\pinlabel {$c^+(e)$} at 180 145
\pinlabel {$c^{-}(e)$} at 180 45
\pinlabel {$c^-(\bar{e})$} at 250 145
\pinlabel {$c^{+}(\bar{e})$} at 250 45
\endlabellist
\centering{\psfig{file=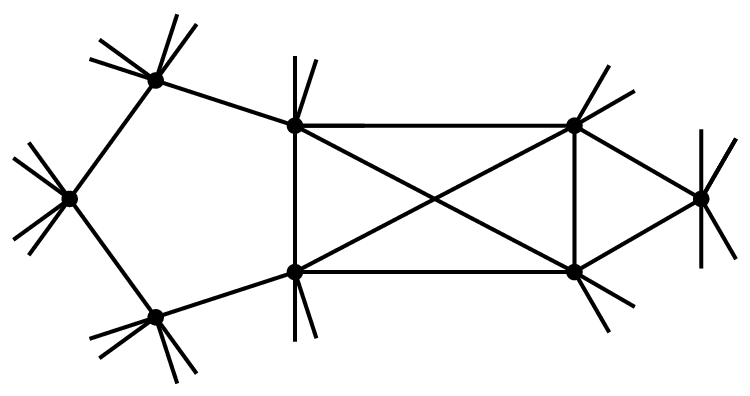,width=.9\linewidth}}
\caption{The corresponding portion of the corner graph~$G^\rC$ with the vertices $c^{\pm}(e)$}
\label{Fig:Corner}
\end{subfigure}
\hfill
\begin{subfigure}[t]{.49\textwidth}
\labellist\small\hair 2.5pt
\pinlabel {$e$} at 175 90
\pinlabel {$\bar{e}$} at 220 90
\pinlabel {$c^+(e)$} at 150 149
\pinlabel {$c^-(e)$} at 150 65
\pinlabel {$c^-(\bar{e})$} at 253 149
\pinlabel {$c^+(\bar{e})$} at 253 65
\endlabellist
\centering{\psfig{file=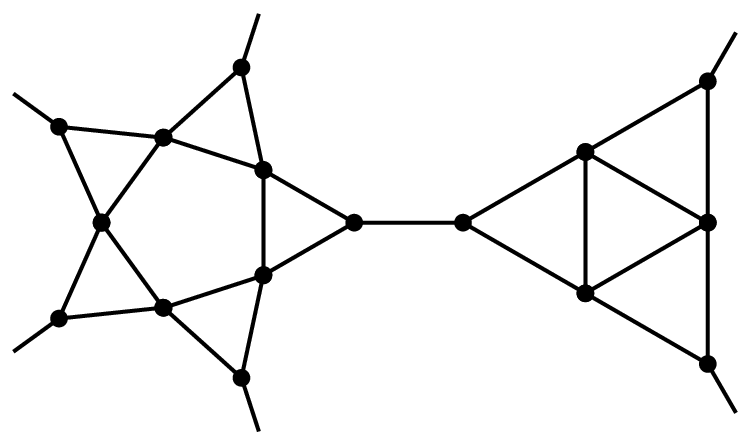,width=.9\linewidth}}
\caption{The corresponding portion of the Fisher graph~$G^\rF$ and mapping of its vertex set $V(G^\rF)$ to $V(G^\rC)\cup V(G^\rK)$}
\label{Fig:Fisher}
\end{subfigure}
\hfill
\begin{subfigure}[t]{.48\textwidth}
\labellist\small\hair 2.5pt
\pinlabel {$v_1$} at 40 50
\pinlabel {$e_1$} at 85 50
\pinlabel {$c_1$} at 180 50
\endlabellist
\centering{\psfig{file=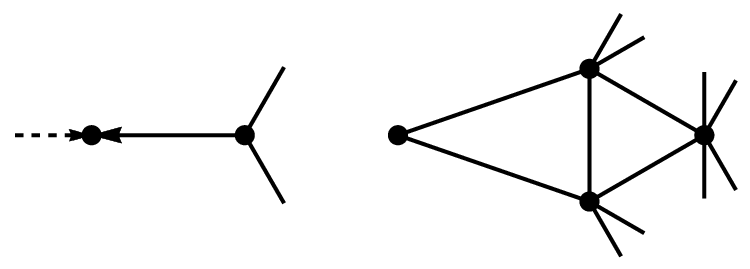,width=.9\linewidth}}
\caption{Special case of a univalent vertex $v_1=t(e_1)$ (with decoration at the corner dashed) in $G$, and the corresponding corner $c_1$ in~$G^\rC$}
\label{Fig:Corner-1}
\end{subfigure}
\caption{The different graphs and the relations between them}
\label{Fig:Corner-Fisher}
\end{figure}

For a while, assume that~$G$ has no vertices of degree~$1$ (clearly, we do not lose generality by making this assumption, though it will be convenient to allow such vertices later on). Let~$G^\rC$ denote the \emph{corner graph} obtained from~$G$ as follows: each vertex~$v$ of~$G$ (of degree~$d(v)$) is replaced by a simple cycle of length~$d(v)$, with each pair of cycles corresponding to neighboring vertices~$v,v'\in G$ being cross-linked by four edges of~$G^\rC$, as shown on Fig.~\ref{Fig:Corner}.
The vertices of~$G^\rC$ are called \emph{corners} of~$G$; note that~$|V(G^\rC)|=|V(G^\rK)|$. With each corner~$c\in V(G^\rC)$ we associate a straight segment on the plane, the so-called~\emph{decoration} of~$G$ at~$c$, oriented \emph{towards} the corresponding vertex of~$G$ which we denote by~$v(c)$. Given an oriented edge~$e\in V(G^\rK)$, we denote by~$c^\pm(e)$ the two neighboring corners of~$e$ satisfying~$v(c^\pm(e))=o(e)$, see~Fig.~\ref{Fig:Corner}.

Let the square matrix~$\rB=(\rB_{c,e})_{c\in V(G^\rC),~e\in V(G^\rK)}$ be defined by
\begin{equation}
\label{eqn:rB-def}
\rB_{c,e}=\begin{cases}
\exp\left[\frac{i}{2}\rw(c,e)\right]\cdot x_e^{-1/2}& \text{if~$c$~is~one~of~the~two~corners~$c^\pm(e)$;} \\
0 & \text{otherwise,}
\end{cases}
\end{equation}
where~$\rw(c,e)$ denotes the rotation angle from the decoration of~$G$ at~$c$ to the oriented edge~$e$. Note that~$\rB$ has a block-diagonal structure with~$d(v)\times d(v)$ blocks~$\rB^{(v)}$ corresponding to vertices~$v\in G$. It is easy to see that
\[
|\det\rB|~=\prod_{v\in V(G)}|\det\rB^{(v)}|=\prod_{v\in V(G)}\biggl[\,2\!\!\!\prod_{v'\in G:v\sim v'} x_{vv'}^{-1/2}\biggr] = ~2^{|V(G)|}\!\!\!\prod_{e\in E(G)}x_e^{-1}.
\]

Let us now define the matrix
\[
\rC:=\rB\rK\rB^*,
\]
whose entries are labeled by the corners~$c\in V(G^\rC)$. A straightforward computation gives
\[
\rC_{c,c'}=\begin{cases}
\exp[\frac{i}{2}\rw(c,\overline{c}')]\cdot x_e^{-1}& \text{if~$v(c)=o(e)$~and~$v(c')=t(e)$~for~some~$e\in V(G^\rK)$;} \\
-\exp[\frac{i}{2}\rw(c,\overline{c}')]& \text{if~$v(c)=v(c')$~and~$c$~is~adjacent~to~$c'$~in~$G^\rC$;} \\
0 & \text{otherwise.}
\end{cases}
\]
Above,~$\rw(c,\overline{c}')$ denotes the rotation angle from the oriented decoration~$c$ to the oppositely oriented decoration~$\bar{c}'$,
measured in a natural way: along the path~$c\oplus e\oplus \bar{c}'$ in the first line, and along~$c\oplus\bar{c}'$ in the second.

Note that $\rC$ is a weighted adjacency matrix of the graph~$G^\rC$ which is ``almost planar'': the only pairs of intersecting edges go along edges of~$G$. To get a weighted adjacency matrix of a \emph{planar} graph, let us introduce a twice bigger matrix $\rF$, whose entries are labeled by the set $V(G^\rC)\cup V(G^\rK)$, as follows:
\[
\rF:=\matr{\rI}{\rB}{0}{\rJ}\matr{\rC}{0}{0}{-\rJ}\matr{\rI}{0}{\rB^*}{\rJ}=\matr{\rC-\rB\rJ\rB^*}{-\rB}{-\rB^*}{-\rJ}.
\]
Again, a straightforward computation shows that, for $c,c'\in V(G^C)$,
\[
\rF_{c,c'}=(\rC-\rB\rJ\rB^*)_{c,c'}=\begin{cases}
-\exp[\frac{i}{2}\rw(c,\overline{c}')]& \text{if~$v(c)=v(c')$~and~$c$~is~adjacent~to~$c'$~in~$G^\rC$;} \\
0 & \text{otherwise.}
\end{cases}
\]

Therefore, $\rF$ is a weighted adjacency matrix of the graph~$G^\rF$ with~$V(G^\rF)=V(G^\rC)\cup V(G^\rK)$, which is constructed from~$G$ as follows: for each~$v\in V(G)$, the~$d(v)$ corners~$c\in V(G^\rC)$ satisfying~$v(c)=v$ are linked cyclically around~$v$; for each edge of $G$, the two corresponding vertices $e,\bar{e}\in V(G^\rK)$ are linked with each other; and, finally, each of the vertices~$e\in V(G^\rK)$ is linked with two neighboring corners $c^\pm(e)\in V(G^\rC)$, see Fig.~\ref{Fig:Fisher}.

As discussed in the introductory paragraph of this section, the \emph{planar} graph $G^\rF$ is not Fisher's original graph~\cite[Fig.~6]{Fisher-66} nor its symmetric modification~\cite{BdT-Z}, but we still call it the \emph{Fisher graph}, following~\cite{Dub}. We shall denote by~$x^\rF$ the edge weights on~$G^\rF$ obtained by assigning weights~${x_e}^{-1/2}$ to short edges of~$G^\rF$ linking vertices ~$e\in V(G^\rK)$ with~$c_e^\pm\in V(G^\rC)$ and weight~$1$ to all other edges.

It is well-known that there exists a simple~$2^{|V(G)|}$--to--$1$ correspondence~$\varrho:\cD(G^\rF)\to\cE(G)$ between perfect matchings of~$G^\rF$ and even subgraphs of~$G$: given~$D\in\cD(G^\rF)$, take all the edges~$e\in E(G)$ corresponding to long dimers~$\{e,\overline{e}\}\in D$. It is easy to check that, for any~$P\in\cE(G)$,
\[
\sum_{D\in\varrho^{-1}(P)}\!\!x^\rF(D)~=~ 2^{|V(G)|}\cdot(x(E(G)\setminus P))^{-1} ~=~ |\det\rB|\cdot x(P)\,,
\]
which leads to the equality
\[
\cZ_{\opname{dimers}}(G^\rF,x^\rF)~=~|\det\rB|\cdot \cZ_{\opname{Ising}}(G,x)\,.
\]

{Similarly to the choice of~$\eta_e$ for~$e\in V(G^\rK)$, for each~$c\in V(G^\rC)$ we fix a square root of the direction of the decoration corresponding to~$c\in V(G^\rC)$ and denote by~$\eta_c$ its complex conjugate, multiplied by the same global unimodular factor $\zeta$.} We denote by~$\rU_\rC$ the diagonal matrix with entries~$\{\eta_c\}_{c\in V(G^\rC)}$, by~$\rU_{\rF}$ the diagonal matrix with entries~$\{\eta_p\}_{p\in V(G^\rF)}$, and set
\begin{equation}
\label{eqn:whB-whC-whF-def}
\wh\rB:={\rU_\rC^*\rB\,\rU_\rK^{\vphantom{*}}}\,,\qquad
\wh\rC:={i\rU_\rC^*\rC\,\rU_\rC^{\vphantom{*}}}=\wh\rB\,\wh\rK\,\wh\rB^\top, \qquad \wh\rF:={i\rU_\rF^*\rF\,\rU_\rF^{\vphantom{*}}}\,,
\end{equation}
where~$\rU_\rK:=\rU$, see~\eqref{eqn:hat-K-def}. Note that all matrices~$\wh\rB,\wh\rC$ and~$\wh\rF$ are real-valued. Moreover,~$\wh\rC$ and~$\wh\rF$ are anti-symmetric since~$\rC$ and~$\rF$ are self-adjoint, similarly to the symmetries of~$\wh\rK$ and~$\rK$. The identities
\[
\det\rF= (-1)^{|E(G)|}\det\rC = |\det\rB|^2\cdot(-1)^{|E(G)|}\det\rK
\]
imply
\[
|\Pf{\wh\rF}|=|\Pf{\wh\rC}|= |\det\rB|\cdot |\Pf{\wh\rK}|.
\]

Finally, it is easy to check that, for any choice of the square roots {in the definition of}~$\eta_e$ {and}~$\eta_c$, the signs of the matrix entries~$\wh\rF_{p,q}$ provide a Kasteleyn orientation \cite{Kast61,Kast67} of the {planar} graph $G^\rF$. Therefore,
\[
\cZ_{\opname{Ising}}(G,x)= |\det\rB|^{-1}\cdot \cZ_{\opname{dimers}}(G^\rF,x^\rF)= |\det\rB|^{-1}|\Pf{\wh\rF}|=|\Pf{\wh\rK}|\,,
\]
which gives an alternative proof of Theorem~\ref{thm:KW1} in the planar case.

\begin{remark}\label{rk:Lieb}
In~\cite[Theorem A.1]{Lieb-Loss}, Lieb and Loss gave a new proof (which is canonical in some sense) of Kasteleyn's theorem for counting dimers on any planar graph.
If we rephrase their result in the special case of our planar graph $G^\rF$, it is interesting to note that their Hermitian matrix $T$ is exactly the matrix $\rF$ we introduced above, thus showing that their method is related to the symmetries of the Kac--Ward matrix.
Indeed, their main theorem is that the square root of the modulus of the determinant of~$T$ counts dimers and that~$T$ can be gauge-transformed (that is, conjugated by a diagonal unitary matrix) to be equal to $i$ times an antisymmetric matrix (which is therefore a Kasteleyn matrix and Kasteleyn's theorem is proved).
Our identity $\wh\rF={i\rU_\rF^*\rF\,\rU_\rF^{\vphantom{*}}}$ expresses the same thing (by furthermore specifying the gauge transform) in the special case of~$G^{\rF}$.
\end{remark}

In the same spirit, one can use the real anti-symmetric matrix $\wh\rC$ to obtain another proof of Theorem~\ref{thm:KW1} based on the considerations of the signed dimer model on the corner graph~$G^\rC$. Similarly to the dimer model on~$G^\rK$ considered in Section~\ref{sec:planar}, in this case one should account the sign~$(-1)^{\rt(D)}$, where $\rt(D)$ now denotes the number of intersections in a dimer configuration~$D\in\cD(G^\rC)$, so that the combinatorial correspondence of~$\cE(G)$ with~$\cD(G^\rC)$ and the expansion of $\Pf{\wh\rC}$ work properly. Again, this essentially amounts to checking that the signs of matrix entries~$\wh\rC_{c,c'}$ provide a crossing orientation of~$G^\rC$ in the terminology of Tesler~\cite{Tesler}.

We conclude this section with a remark on modifications needed to include univalent (i.e. having degree~$1$) vertices of~$G$ into considerations, this will be useful to discuss boundary conditions for discrete fermionic observables in Section~\ref{sub:s-observables} below.

\begin{remark}
\label{rem:degree-one-vertices}
Let~$v_1\in V(G)$ be a degree~$1$ vertex and $e_1=e_1(v_1)\in\EE(G)$ be the unique oriented edge of~$G$ satisfying~$t(e_1)=v_1$. In this case the corner graph~$G^\rC$ contains only \emph{one} corner~$c_1=c_1(v_1)\in V(G^\rC)$ near~$v_1$, and we always draw the corresponding decoration as pointing in the same direction as the edge~$\overline{e}_1$, see Fig.~\ref{Fig:Corner-1}. The corresponding $1\times 1$ block of the matrix~$\rB$ is then defined as
\[
\rB_{c_1,\overline{e}_1}=\exp[{\textstyle\frac{i}{2}}\rw(c_1,\overline{e}_1)]\cdot x_{e_1}^{-1/2}=x_{e_1}^{-1/2},
\]
while the mapping $\varrho:\cD(G^\rF)\to\cE(G)$ is $1$-to-$1$ near such~$v_1$, not $2$-to-$1$: the edge~$e_1$ never participates in~$P\in\cE(G)$ while the dimer $\{c_1,\overline{e}_1\}$ presents in all $D\in\cD(G^\rF)$. With these modifications, the arguments given above remain true in presence of degree~$1$ vertices.
\end{remark}

\subsection{Grassmann variables and double-covers} \label{sub:grassmann}

In this section we discuss the well-known formalism of Grassmann variables (e.g. see \cite[Chapter 2.A]{YellowBook}), i.e. \emph{anti-}commuting variables~$\phi_1,\dots,\phi_{2N}$ associated with a real \emph{anti-}symmetric matrix~\mbox{$A=(A_{j,k})_{j,k=1}^{2N}$\,,} in the context of the planar Ising model, when the matrix $A$ is equal to~$\wh\rK$ or~$\wh\rK_{[u_1,..,u_m]}$\,. In the latter case, we introduce a \emph{double-cover}~$G_{[u_1,..,u_m]}$ of~$G$ in order to make the formal correlation functions independent of the choice of collection of cuts~$\varkappa=\varkappa_{[u_1,..,u_m]}$.

For a real anti-symmetric matrix~$A$, let
\[
\mathcal{Z}_{A}=\int \exp[-{\textstyle \frac{1}{2}}\phi^\top\!A\phi]~d\phi_{1}\dots d\phi_{2N} = \Pf{A}
\]
denote the coefficient of the highest monomial~$\phi_{2N}\dots\phi_{1}$ in the formal Taylor expansion of~$\exp[-{\textstyle \frac{1}{2}}\phi^\top\!A\phi]$ (note that, since~$\phi_k^2=0$, this expansion contains only a finite number of terms). Further, for an even subset~$k_1,\dots,k_{2n}$ of indices~$1,\dots,2N$, let
\[
\mathcal{Z}_{A}[\phi_{k_1}\dots\phi_{k_{2n}}] =\int \phi_{k_1}\dots\phi_{k_{2n}}\!\exp[-{\textstyle \frac{1}{2}}\phi^\top\!A\phi]~d\phi_{1}\dots d\phi_{2N}
\]
be the coefficient of the highest monomial~$\phi_{2N}\dots\phi_1$ in the formal expansion of the expression~$\phi_{k_1}\dots\phi_{k_{2n}}\!\exp[-{\textstyle \frac{1}{2}}\phi^\top\!A\phi]$. Note that this coefficient trivially vanishes if some of these~$\phi_k$ variables is repeated twice (or more) or if the number of these variables is odd.

Let~$A=\wh\rK$. Recall that, up to a global~$\pm 1$ sign, $\mathcal{Z}_{\wh\rK}=\Pf{\wh\rK}$ is equal to the Ising model partition function~$\cZ_{\opname{Ising}}(G,x)$ due to Theorem~\ref{thm:KW1}.
The \emph{formal correlation function} of the Grassmann variables~$\phi_{e}$ with~$e\in\rE=\{e_1,\dots,e_{2n}\}\subset V(G^\rK)$ is {defined} as
\[
\lan\phi_{e_1}\dots\phi_{e_{2n}}\!\ran_{\wh{\rK}}~:= \frac{\mathcal{Z}_{\wh{\rK}}[\phi_{e_1}\dots\phi_{e_{2n}}]}{\mathcal{Z}_{\wh{\rK}}}\,.
\]
By definition, this function is anti-symmetric with respect to the ordering of the variables~$\phi_{e_k}$. In particular, $\lan\phi_{e_j}\phi_{e_k}\!\ran_{\wh{\rK}}~=-\!\lan\phi_{e_k}\phi_{e_j}\!\ran_{\wh{\rK}}$\,. It is an easy exercise to check that
\[
\lan\phi_{e_1}\dots\phi_{e_{2n}}\!\ran_{\wh{\rK}}~= ~\pm \frac{\Pf{\wh{\rK}}_{e,e'\not\in\rE}}{\Pf{\wh\rK}}=~
\Pf{\wh{\rK}^{-1}_{e_j,e_k}}_{j,k=1}^{2n} = \Pf{\lan\phi_{e_j}\phi_{e_k}\!\ran_{\wh{\rK}}}_{j,k=1}^{2n} \,,
\]
where the sign in the middle depends on the ordering of the sets~$V(G^\rK)\setminus \rE$ and~$V(G^\rK)$.

\begin{remark}
\label{rem:grassmann-combinatorics}
Working with the Grassmann variables formalism, one can wonder about the~\emph{combinatorial} interpretation of the arising formal correlation functions~\mbox{$\lan\phi_{e_1}\dots\phi_{e_{2n}}\!\ran_{\wh{\rK}}$}\,. The answer to this question is the matter of Theorem~\ref{thm:multipoint}.
\end{remark}

Now let~$A=\wh\rK_{[u_1,..,u_m]}$\,. Recall that, for a given collection of faces~$u_1,\dots,u_m\in V(G^*)$ and a fixed collection of dual paths~$\varkappa=\varkappa_{[u_1,..,u_m]}$ linking $u_1,\dots,u_m$ and, possibly,~$u_{\opname{out}}$ on~$V(G^*)$, we have~$\wh\rK_{[u_1,..,u_m]}={i\rU^*\rK_{[u_1,..,u_m]}\rU}$, where
\[
(\rK_{[u_1,..,u_m]})_{e,e'}=\begin{cases}
(-1)^{\varkappa\cdot e} &\text{if~$e'=\overline{e}$;}\\
-\exp[\frac{i}{2}\rw(\overline{e},e')]\cdot(x_ex_{e'})^{1/2}& \text{if~$o(e)=o(e')$ but~$e'\neq e$;} \\
0 & \text{otherwise.}
\end{cases}
\]
Given a collection of oriented edges~$\rE=\{e_1,\dots,e_{2n}\}\subset V(G^\rK)$, we define the ``twisted'' correlation functions of Grassmann variables~$\phi_e$ as
\[
\lan\phi_{e_1}\dots\phi_{e_{2n}}\!\ran_{[u_1,..,u_m]}~:= \mathcal{Z}_{\wh{\rK}_{[u_1,..,u_m]}^{-1}}\!\!\cdot\,{\mathcal{Z}_{\wh{\rK}_{[u_1,..,u_m]}}[\phi_{e_1}\dots\phi_{e_{2n}}]}= \Pf{\wh{\rK}_{[u_1,..,u_m]}^{-1}}_{e,e'\in\rE}
\]
(in this case, the relevant combinatorial expansions are provided by Proposition~\ref{prop:multipoint-with-spins}). As usual, these formal correlations implicitly depend on the choice of the paths~$\varkappa_{[u_1,..,u_m]}$ but there is a standard way to make the notation more invariant.

Let~$\C_{[u_1,..,u_m]}$ denote the canonical double-branched cover of the complex plane~$\C$ with branching points~$u_1,\dots,u_m\in V(G^*)\subset\C$, which is endowed with an involution~$z\mapsto z^\sharp$ and with a projection onto~$\C$. Then, any graph~$G$ embedded in~$\C\setminus\{u_1,\dots,u_m\}$ lifts to the canonical double-cover~$G_{[u_1,..,u_m]}$ embedded in~$\C_{[u_1,..,u_m]}$. Now, any cut set~$\varkappa_{[u_1,..,u_m]}$ gives us a particular way to construct this cover as two copies of the plane cut and pasted, and idem for the embedded graphs. Equivalently, such a set of cuts gives us two sections of this cover
(i.e. the choice of one point above each of the points in the plane). Given~$\varkappa_{[u_1,..,u_m]}$ and~$e\in V(G^\rK_{[u_1,..,u_m]})$ lying on one of the corresponding sections, we set
\begin{equation}
\label{eqn:phi-spinor}
\phi_{e^\sharp}:=-\phi_{e}\,.
\end{equation}
This notation allows us to speak about formal correlation functions~$\lan\phi_{e_1}\dots\phi_{e_{2n}}\!\ran_{[u_1,..,u_m]}$ with~$e_1,\dots,e_{2n}$ on the canonical double-cover of the terminal graph~$G^\rK$ and it is easy to see that these quantities are \emph{independent of the choice of~$\varkappa=\varkappa_{[u_1,..,u_m]}$}.
Indeed, shifting~$\varkappa$ across some vertex~$v\in V(G)$ amounts to the multiplication by~$-1$ of all the rows and the columns of the matrix~$\rK_{[u_1,..,u_m]}$ that are labeled by oriented edges~$e$ with~$o(e)=v$. This leads to the multiplication of all the corresponding variables~$\phi_e$ by~$-1$ and agrees with~\eqref{eqn:phi-spinor}.

\subsection{Disorder insertions} \label{sub:disorders}

In this section we briefly discuss the formalism of disorder insertions developed in~\cite{Kadanoff-Ceva}, see also~\cite{dub-abelian}. In this approach, one easily finds \emph{combinatorial} expansions of correlation functions similar to Theorem~\ref{thm:multipoint} and Proposition~\ref{prop:multipoint-with-spins}, as shown in~Lemma~\ref{lemma:mu-sigma} and Remark~\ref{rem:x-mu-sigma} below. {On the other hand, following this approach then requires additional efforts} to reveal the underlying \emph{Pfaffian} structure of correlation functions. (Note that this is exactly opposite to the discussion of the Grassmann variables formalism given above, cf. Remark~\ref{rem:grassmann-combinatorics}.)

Recall that we prefer to work with the domain walls representations of the Ising model, thus the spins~$\sigma$ are associated with faces of $G$ while \mbox{\emph{disorders}}~$\mu_v$ will be associated to \emph{vertices} of~$G$ ({this is dual to the more common convention which assigns spins to vertices of~$G$ and disorders to its faces}). Given an even number of vertices~$v_1,\dots,v_{2n}\in V(G)$, let us fix a collection of edge-disjoint paths~$\varkappa=\varkappa^{[v_1,..,v_{2n}]}\subset E(G)$ matching them so that each vertex~$v_k$ has an odd degree in~$\varkappa$ while all other vertices have even degrees, and let
\[
\langle\mu_{v_1}\dots\mu_{v_{2n}}\rangle:=\mathbb{E}_{G^*}^+ \left[\prod\nolimits_{e\in\varkappa} x_e^{\reps_e}\right]\,,
\]
where~$\reps_e=\pm 1$ denotes the energy density (product of two nearby spins) on an edge~$e$.
Using domain walls representations, this can be written as
\[
\langle\mu_{v_1}\dots\mu_{v_{2n}}\rangle=\frac{\cZ_{\opname{low}}^{[v_1,..,v_{2n}]}(G,x)}{\cZ_{\opname{low}}(G,x)}\qquad\text{with}\qquad
\cZ_\mathrm{low}^{[v_1,..,v_{2n}]}(G,x)\,:=\,\sum\nolimits_{P\in\cC(v_1,..,v_{2n})}x(P)\,,
\]
where $\cC(v_1,\dots,v_{2n}):=\{P:~ P\btu\varkappa^{[v_1,..,v_{2n}]}\in\cE(G)\}$ is the set of subgraphs~$P$ of~$G$ with all vertices of~$G$ except~$v_1,\dots,v_{2n}$ having even degrees, and all the~$v_k$ odd. (Above, we use the subscript~\emph{low} in order to emphasize that these subgraphs should be thought about as domain walls aka low-temperature expansions of the Ising model defined on faces of~$G$. Note that the Kramers-Wannier duality allows one to interpret~$\langle\mu_{v_1}\dots\mu_{v_{2n}}\rangle$ as the high-temperature expansion of the corresponding spin correlation in the dual model defined on vertices of~$G$.) It is clear that the quantity~$\langle\mu_{v_1}\dots\mu_{v_{2n}}\rangle$ does not depend on the choice of~$\varkappa^{[v_1,..,v_{2n}]}$. {However, the notation should not be directly interpreted probabilistically as the expectation of a product of random variables since the~$\mu_{v}$ themselves \emph{cannot} be thought of as random variables.}

Similarly to Section~\ref{sub:grassmann}, let us consider the canonical double-cover~$G^{[v_1,..,v_{2n}]}$ of the graph~$G$ with the branch set~$v_1,\dots,v_{2n}$, endowed with the involution~\mbox{$u\mapsto u^\sharp$} acting on its \emph{faces}. It is easy to see that~$\cZ_{\opname{low}}^{[v_1,..,v_{2n}]}(G,x)$ is the partition function of the Ising model defined on faces of~$G^{[v_1,..,v_{2n}]}$ with the \emph{spin-flip symmetry} constrain~$\sigma_{u^\sharp}=-\sigma_{u}$ and a fixed spin of the outer face. From this perspective, the choice of the collection of paths~$\varkappa^{[v_1,..,v_{2n}]}$ is nothing but a choice of a section of~$G^{[v_1,..,v_{2n}]}$. Given faces~$u_1,\dots,u_m$ of the \emph{double-cover}~$G^{[v_1,..,v_{2n}]}$, we set
\begin{equation}
\label{eqn:mu-sigma-def}
\langle\mu_{v_1}\dots\mu_{v_{2n}}\sigma_{u_1}\dots\sigma_{u_m}\rangle:=\mathbb{E}_{G^*}^{[v_1,..,v_m]}[\sigma_{u_1}\dots\sigma_{u_m}]\cdot \langle\mu_{v_1}\dots\mu_{v_{2n}}\rangle,
\end{equation}
where~$\mathbb{E}_{G^*}^{[v_1,..,v_m]}$ stands for the expectation in the Ising model described above. By definition, this quantity changes sign when one of~$u_k$ is replaced by~$u_k^\sharp$. Note that we allow repeating faces~$u_k$ in~\eqref{eqn:mu-sigma-def}, in which case the corresponding spins cancel out. By a slight abuse of the notation, one can also allow repeating disorders~$\mu_{v_k}$ with the same cancellation effect.

Let us now consider a special situation when~$m=2n$ and each of the faces~$u_k$ is incident to the corresponding vertex~$v_k$. More precisely, we consider a collection of~$2n$ \emph{pairwise distinct} corners~$c_1,\dots,c_{2n}$ of~$G$, put~$v_k:=v(c_k)$, and denote by~$u_k:=u(c_k)$ the face of~$G$ that contains~$c_k$. Note that we \emph{do} allow repetitions of these vertices and faces. Let
\begin{equation}
\label{eqn:def-corners-conf}
\cC(c_1,\dots,c_{2n}):=\{Q=P_Q\oplus c_1\oplus\dots\oplus c_{2n},~P_Q\in\cC(v_1,\dots,v_{2n})\}
\end{equation}
be the set of subgraphs from~$\cC(v_1,\dots,v_{2n})$ with decorations~$c_1,\dots,c_{2n}$ attached to the vertices~$v_1,\dots,v_{2n}$. Similarly to the case of oriented edges, we introduce a sign~$\tau(Q)\in\{\pm 1\}$ by resolving all the crossings of a configuration~$Q$ so that~$Q=C\sqcup\gamma_1\sqcup\dots\sqcup\gamma_n$, where~$n$ simple paths~$\gamma_k$ run from~$c_{s(2k-1)}$ to~$c_{s(2k)\vphantom{1}}$ and~$C$ is a collection of disjoint simple loops, and setting
\[
\tau(Q):={\sign(s)\,\cdot}\prod_{k=1}^n \left({i\eta_{c_{s(2k-1)}}^{\vphantom{|}}\overline{\eta}_{c_{s(2k\vphantom{1})}}} \exp[-{\textstyle\frac{i}{2}}\wind(\gamma_k)]\right)\,.
\]
Again, it is not hard to see that~$\tau(Q)$ is well-defined (i.e. independent of the smoothing of~$Q$).

\begin{lemma}
\label{lemma:mu-sigma}
Let~$c_1,\dots,c_{2n}\in V(G^\rC)$ be a collection of~$2n$ pairwise distinct corners of~$G$. Denote $v_k:=v(c_k)$ and let~$u_k=u(c_k)$ be the face of~$G$ that contains~$c_k$. Then,
\[
\langle\mu_{v_1}\dots\mu_{v_{2n}}\sigma_{u_1}\dots\sigma_{u_{2n}}\rangle ~=~ \pm\, [\cZ_{\opname{Ising}}(G,x)]^{-1}\,\cdot\!\!\!\!\!\sum_{Q\in\cC(c_1,\dots,c_{2n})}\!\!\!\tau(Q)x(P_Q)\,,
\]
with the sign depending on the choice of representatives of the faces~$u_1,\dots,u_{2n}$ on~$G^{[v_1,..,v_{2n}]}$.
\end{lemma}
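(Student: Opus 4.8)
The plan is to reduce Lemma~\ref{lemma:mu-sigma} to the already-established Theorem~\ref{thm:multipoint} (or, more precisely, its twisted generalization Proposition~\ref{prop:multipoint-with-spins}) by transporting the statement from the corner graph~$G^\rC$ to the terminal graph~$G^\rK$ via the conjugation~$\rC=\rB\rK\rB^*$, $\wh\rC=\wh\rB\,\wh\rK\,\wh\rB^\top$ recorded in~\eqref{eqn:whB-whC-whF-def}. First I would make the dictionary explicit: a corner~$c$ determines an oriented edge~$e=e(c)\in\EE(G)=V(G^\rK)$ (the unique oriented edge with $o(e)=v(c)$ lying on the side of $c$), together with the data of which of the two faces at $e$ the corner sits in; dually, disorder insertions at $v_1,\dots,v_{2n}$ correspond to twisting the weights of $G$ along a dual-type cut set, exactly the operation that turns $\cE(G)$ into $\cC(v_1,\dots,v_{2n})$. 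The upshot is that a configuration $Q\in\cC(c_1,\dots,c_{2n})$ is the same combinatorial object as a half-edge configuration $P\in\cC(e(c_1),\dots,e(c_{2n}))$ on $G^\diamondsuit$, together with the sign-twist coming from the disorders, and the weights match: $x(P_Q)=x(P)$.

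Second, I would identify the two Pfaffian sides. On the matrix side, since $\wh\rC=\wh\rB\,\wh\rK\,\wh\rB^\top$ with $\wh\rB$ block-diagonal and invertible (each block $\wh\rB^{(v)}$ being a nondegenerate $d(v)\times d(v)$ matrix, as $|\det\rB|\neq 0$ was computed above), one has $\wh\rC^{-1}=(\wh\rB^\top)^{-1}\wh\rK^{-1}\wh\rB^{-1}$, and hence the submatrix $(\wh\rC^{-1})_{c_j,c_k}$ is obtained from $(\wh\rK^{-1})_{e(c_j),e(c_k)}$ by multiplying on both sides by the relevant scalar entries of $\wh\rB$. Taking Pfaffians, $\Pf{(\wh\rC^{-1})_{c_j,c_k}}$ equals $\Pf{(\wh\rK^{-1})_{e(c_j),e(c_k)}}$ times a product of the corresponding $\wh\rB$-entries, i.e. a product of phases $i^{?}\eta_c\overline\eta_{e(c)}x_{e(c)}^{-1/2}$ type factors together with explicit powers of $x_e$. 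The correlation $\langle\mu_{v_1}\dots\mu_{v_{2n}}\sigma_{u_1}\dots\sigma_{u_{2n}}\rangle$ is, by its definition~\eqref{eqn:mu-sigma-def} and the domain-walls representation, a ratio $\cZ^{[v_1,..,v_{2n}]}_{\mathrm{low}}$-type quantity over $\cZ_{\opname{Ising}}(G,x)$; I would rewrite it as a Pfaffian of $\wh\rC^{-1}$ (on the twisted double cover) using Proposition~\ref{prop:spin1}/Proposition~\ref{prop:multipoint-with-spins} applied to $G^\rC$, and then push it down to $\wh\rK^{-1}$ using the conjugation just described.

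Third, the remaining work is to check that the \emph{sign} $\tau(Q)$ defined for corner configurations matches the sign $\tau(P)$ of the associated half-edge configuration on $G^\diamondsuit$ up to the global $\pm$ and the stray phases produced by the $\wh\rB$-conjugation — i.e. that the winding-angle bookkeeping is consistent. This is the analogue of Claim~C in the proof of Theorem~\ref{thm:multipoint}: one resolves the crossings of $Q$ into loops $C$ and paths $\gamma_k$ running between corners $c_{s(2k-1)}$ and $c_{s(2k)}$, compares $\exp[-\tfrac{i}{2}\wind(\gamma_k)]$ measured between corners versus between the midpoints $z_{e(c_k)}$, and absorbs the discrepancy (the short final segments of each $\gamma_k$ inside the decoration and across $G^\diamondsuit$) into the $\wh\rB$-phases $\exp[\tfrac{i}{2}\rw(c,e)]$, invoking Lemma~\ref{lemma:Whitney} exactly as before to control self-intersection parity. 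I expect \textbf{this sign-matching step to be the main obstacle}: everything else is bookkeeping about a linear-algebra conjugation, but verifying that the corner-based windings, the $\eta_c$ versus $\eta_{e(c)}$ square-root choices, and the smoothing-independence of $\tau(Q)$ all line up with the already-proven $\tau(P)$ formula requires a careful, orientation-by-orientation comparison of rotation angles at each corner. Since $\tau(Q)$ is visibly well-defined (independent of the smoothing) by the same argument sketched after its definition, it suffices to check the identity for \emph{one} convenient non-self-intersecting smoothing, which makes the comparison tractable; the global sign is then pinned down, as in Theorem~\ref{thm:KW1}, by the normalization of the reference configuration.
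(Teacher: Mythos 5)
Your plan misidentifies what the lemma asserts and is circular at its first step. The quantity $\langle\mu_{v_1}\dots\mu_{v_{2n}}\sigma_{u_1}\dots\sigma_{u_{2n}}\rangle$ is defined in~\eqref{eqn:mu-sigma-def} purely probabilistically, as a spin expectation on the double cover~$G^{[v_1,..,v_{2n}]}$ multiplied by~$\langle\mu_{v_1}\dots\mu_{v_{2n}}\rangle$; neither Proposition~\ref{prop:spin1} nor Proposition~\ref{prop:multipoint-with-spins} expresses such a quantity as a Pfaffian of~$\wh\rC^{-1}$. Those results insert spins via dual cuts and Grassmann variables~$\phi_e$ attached to oriented edges; they say nothing about disorder operators~$\mu_v$, which create odd-degree defects at \emph{vertices}. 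The identity $\langle\mu_{v_1}\dots\sigma_{u_{2n}}\rangle=\pm\lan\chi_{c_1}\dots\chi_{c_{2n}}\ran=\pm\Pf{4\wh\rC^{-1}_{c_j,c_k}}$ that your second paragraph takes as a starting point is exactly Proposition~\ref{prop:equivalence}, which the paper deduces \emph{from} Lemma~\ref{lemma:mu-sigma} together with Lemma~\ref{lemma:chi-2n-combinatorial}; assuming it here begs the question. Relatedly, your ``dictionary'' is not a bijection: each corner~$c$ is adjacent to \emph{two} oriented edges out of~$v(c)$, and a configuration $Q\in\cC(c_1,\dots,c_{2n})$ only forces $v_k$ to have odd degree in~$P_Q$, without singling out any half-edge at a midpoint~$z_e$. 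The passage between corner configurations and the sets $\cC(e_1,\dots,e_{2n})$ of Theorem~\ref{thm:multipoint} goes through the relation $\phi=\frac12\wh\rB^\top\chi$ and a cancellation argument (contributions of the two configurations differing by the choice of $c^\pm(e)$ cancel unless $e\in P_Q$); that sign-matching, which you flag as the main obstacle, is the content of Lemma~\ref{lemma:chi-2n-combinatorial}, not of the present lemma.

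What Lemma~\ref{lemma:mu-sigma} actually requires --- and what your proposal never addresses --- is the identity $\sigma_{u_1}\dots\sigma_{u_{2n}}=\tau^0\cdot\tau(Q)$ for all $Q\in\cC(c_1,\dots,c_{2n})$, with a sign~$\tau^0$ independent of~$Q$. The paper proves this with no Pfaffians at all: fix dual paths~$\varkappa^0$ matching $u_1,\dots,u_{2n}$ on~$G^*$, so that $\sigma_{u_1}\dots\sigma_{u_{2n}}=(-1)^{\varkappa^0\cdot Q}$ in the domain-walls representation on the double cover; smooth $Q$ into $C\sqcup\gamma$; concatenate $\gamma$ with~$\varkappa^0$ (decorations attached) into a collection~$\Delta$ of closed curves; and compare $(-1)^{\varkappa^0\cdot Q}$ with $\tau(Q)\tau(\varkappa^0)$ via Lemma~\ref{lemma:Whitney} and an intersection-parity count. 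The lemma is then immediate. To salvage your route you would have to either carry out this topological computation anyway, or give an independent, non-circular proof that the disorder--spin correlator is a Pfaffian in~$\wh\rC^{-1}$ --- which is essentially as hard as the lemma itself.
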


\begin{proof}
Using the domain walls representation~$P_Q$ of the Ising model on~$G^{[v_1,..,v_{2n}]}$, it is easy to see that the lemma follows from the equality
\begin{equation}
\label{eqn:x-sigma-tauQ}
\sigma_{u_1}\dots\sigma_{u_{2n}}= \tau^0\cdot\tau(Q)\quad\text{for~all}\ \ Q\in\cC(c_1,\dots,c_{2n})\,,
\end{equation}
where the sign~$\tau^0\in\{\pm 1\}$ {is} independent of~$Q$. To prove this, let us fix a collection of edge-disjoint paths~$\varkappa^0=\varkappa_{[u_1,..,u_{2n}]}^0$ matching the faces~$u_1,\dots,u_{2n}$ on the \emph{dual} graph~$G^*$. We attach decorations~$c_1,\dots,c_{2n}$ to the endpoints of these paths and denote the result by~$\gamma_1^0,\dots,\gamma_n^0$, without loss of generality we can assume that~$\gamma_k^0$ runs from~$c_{2k-1}$ to~$c_{2k\vphantom{1}}$. Let
\[
\tau(\varkappa^0):=\prod_{k=1}^n ({-i\eta_{c_{2k-1}}^{\vphantom{|}}\overline{\eta}_{c_{2k\vphantom{1}}}} \exp[-{\textstyle\frac{i}{2}}\wind(\gamma_k^0)])\,.
\]
Note that, for a proper choice of representatives of~$u_1,\dots,u_{2n}$ on the double-cover~$G^{[v_1,\dots,v_{2n}]}$ and any configuration~$Q\in\cC(c_1,\dots,c_{2n})$, one has
\[
\sigma_{u_1}\dots\sigma_{u_{2n}} = (-1)^{\varkappa^0\cdot Q},
\]
here and below~$\alpha\cdot\beta$ denotes the intersection number of~$\alpha$ and~$\beta$. Let us consider a smoothing~$Q=C\sqcup\gamma$, where~$C$ is a collection of closed curves and~$\gamma$ a collection of~$n$ paths~$\gamma_k$ running from~$c_{s(2k-1)}$ to~$c_{s(2k\vphantom{1})}$, which are oriented so that the concatenation~$\gamma\oplus\varkappa^0$ becomes a collection~$\Delta$ of~$m$~oriented cycles~$\Delta_j$\,. Since~$C$ and~$\gamma$ do not intersect or self-intersect, one has
\[
\varkappa^0\!\cdot Q~=~\varkappa^0\!\cdot(C\sqcup\gamma)~=~\Delta\cdot (C\sqcup\gamma)~=~\Delta\cdot\gamma~=~\rt(\Delta)-\varkappa^0\!\cdot\varkappa^0\mod~2\,,
\]
where~$\rt(\Delta)$ is the number of self-intersections in~$\Delta$. On the other hand, using the
equality~{$\sign(s)=(-1)^m$} and Lemma~\ref{lemma:Whitney}, we see that
\[
\tau(Q)\tau(\varkappa^0)=(-1)^m\prod_{j=1}^m\exp[-{\textstyle\frac{i}{2}}\wind(\Delta_j)] =(-1)^{\sum_{j=1}^m\rt(\Delta_j)}=(-1)^{\rt(\Delta)},
\]
which implies~\eqref{eqn:x-sigma-tauQ} with~$\tau^0:=(-1)^{\varkappa^0\!\cdot\varkappa^0}\tau(\varkappa^0)$.
\end{proof}

\begin{remark}
\label{rem:x-mu-sigma}
Lemma~\ref{lemma:mu-sigma} can be easily generalized in the following way. In addition to the collection of~$2n$ corners~$c_1,\dots,c_{2n}$,
let us consider another~$m$ faces~$u_1',\dots,u_m'$ of~$G$ and let~$\varkappa'$ be a proper collection of edge-disjoint paths linking~$u_1',\dots,u_m'$ and, possibly,~$u_{\opname{out}}$ on~$G^*$. Note that we do not assume that these new faces are distinct from~$u_1,\dots,u_{2n}$ and we allow~$\varkappa'$ and~$\varkappa^0=\varkappa^0_{[u_1,..u_{2n}]}$ to share edges of~$G^*$. Repeating the proof of Lemma~\ref{lemma:mu-sigma}, we obtain
\[
\langle\mu_{v_1}\dots\mu_{v_{2n}}\sigma_{u_1}\dots\sigma_{u_{2n}}\sigma_{u_1'}\dots\sigma_{u_m'}\rangle ~=~ \pm\, [\cZ_{\opname{Ising}}(G,x)]^{-1}\,\cdot\!\!\!\!\!\sum_{Q\in\cC(c_1,\dots,c_{2n})}\!\!\!(-1)^{\varkappa'\cdot Q}\tau(Q)x(P_Q)
\]
since~$\sigma_{u_1}\dots\sigma_{u_{2n}}\sigma_{u_1'}\dots\sigma_{u_m'} = (-1)^{\varkappa^0\cdot Q}\cdot (-1)^{\varkappa'\cdot Q}$.
\end{remark}

\subsection{Equivalence of the two previous formalisms}
\label{sub:equivalence}
The aim of this section is to show that the two formalisms (Grassmann variables and disorder insertions) discussed above are essentially equivalent. This fact is quite well-known in the folklore but we do not know of a reference explaining this correspondence in an explicit manner, especially when working in presence of additional spin variables in the formal correlation functions. Note that Theorem~\ref{thm:multipoint} and its generalization provided by Proposition~\ref{prop:multipoint-with-spins} are the crucial ingredients needed to justify this equivalence.

We begin by introducing some additional notation. Let $\{\chi_{c}\}_{c\in V(G^\rC)}$ be another~$2|E(G)|$ Grassmann variables assigned to the \emph{corners} of the graph~$G$, which are related to the ``edge'' variables~$\phi_e$ discussed in Section~\ref{sub:grassmann} by the linear transform
\begin{equation}
\label{eqn:B-chi-to-phi}
\phi={\textstyle\frac{1}{2}}\,\wh{\rB}^\top\!\chi\,,\qquad\text{where}\quad \wh{\rB}^\top_{e,c}=\begin{cases}
{\overline{\eta}_c\eta_e^{\vphantom{|}}}\exp\left[\frac{i}{2}\rw(c,e)\right]\cdot x_e^{-1/2}& \text{if~$c=c^\pm(e)$;} \\
0 & \text{otherwise,}
\end{cases}
\end{equation}
see~\eqref{eqn:rB-def} and~\eqref{eqn:whB-whC-whF-def}. Note that this change of variables is local in the following sense: for a given vertex~$v\in V(G)$, the variables~$\phi_e$ with~$o(e)=v$ are linear combinations of the variables~$\chi_c$ with $v(c)=v$, and vice versa. Since~$\wh\rB\,\wh\rK\,\wh\rB^\top=\wh\rC$, the quadratic forms~$\chi^\top(\frac{1}{4}\wh\rC)\chi$ and~$\phi^\top\wh\rK\phi$ coincide. Therefore, one can think about the new variables~$\chi_c$ as being associated to the anti-symmetric matrix~$A=\frac{1}{4}\wh\rC$ in a standard way described in Section~\ref{sub:grassmann} so that
\[
\lan\chi_{c_1}\dots\chi_{c_{2n}}\!\ran_{\frac{1}{4}\wh\rC}~:= \Pf{4\wh\rC^{-1}_{c_j,c_k}}_{j,k=1}^{2n}\,.
\]
At the same time, for any~$c_1,\dots,c_{2n}$, one has
\[
\lan\chi_{c_1}\dots \chi_{c_{2n}}\!\ran_{\frac{1}{4}\wh\rC} ~=~ \lan \chi_{c_1}\dots \chi_{c_{2n}}\!\ran_{\wh\rK}\,,
\]
where the right-hand side should be understood as follows: write each of the variables~$\chi_{c_k}$ as a linear combination of the old variables~$\phi_e$ and then compute the arising linear combination of the terms~$\lan\phi_{e_1}\dots \phi_{e_{2n}}\!\ran_{\wh\rK}$\,. This allows us to drop the subscripts~$\wh\rK$ or~$\frac{1}{4}\wh\rC$ from the notation. The next lemma provides combinatorial expansions of the quantities~$\lan\chi_{c_1}\dots\chi_{c_{2n}}\!\ran$\,.

\begin{lemma}
\label{lemma:chi-2n-combinatorial}
 Let~$c_1,\dots,c_{2n}\in V(G^\rC)$ be a collection of corners of~$G$ adjacent to pairwise distinct vertices~$v_k=v(c_k)$. Then,
\begin{equation}
\label{eqn:chi-2n-combinatorial}
\lan\chi_{c_1}\dots\chi_{c_{2n}}\!\ran ~=~ [\cZ_{\opname{Ising}}(G,x)]^{-1}\,\cdot\!\!\!\!\!\sum_{Q\in\cC(c_1,\dots,c_{2n})}\!\!\!\tau(Q)x(P_Q)\,,
\end{equation}
where the set of configurations~$\cC(c_1,\dots,c_{2n})$ is given by~\eqref{eqn:def-corners-conf}.
\end{lemma}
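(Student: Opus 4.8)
The plan is to reduce Lemma~\ref{lemma:chi-2n-combinatorial} to the already-established Theorem~\ref{thm:multipoint} (and its generalization Proposition~\ref{prop:multipoint-with-spins}) by expanding the corner variables~$\chi_{c_k}$ in terms of the edge variables~$\phi_e$ via the local change of variables~\eqref{eqn:B-chi-to-phi}. Concretely, for a corner~$c$ with~$v(c)=v$, the relation~$\phi=\tfrac12\wh\rB^\top\chi$ can be inverted locally near~$v$: each~$\chi_c$ is a linear combination of the variables~$\phi_e$ with~$o(e)=v$, say~$\chi_c=\sum_{o(e)=v}\lambda_{c,e}\,\phi_e$ for explicit coefficients~$\lambda_{c,e}$ read off from the $d(v)\times d(v)$ block~$\wh\rB^{(v)}$. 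Substituting this into~$\lan\chi_{c_1}\dots\chi_{c_{2n}}\!\ran$ and using multilinearity of the formal correlation together with Theorem~\ref{thm:multipoint}, one gets
\[
\lan\chi_{c_1}\dots\chi_{c_{2n}}\!\ran
~=~[\cZ_{\opname{Ising}}(G,x)]^{-1}\cdot\!\!\sum_{e_1,\dots,e_{2n}}\Bigl(\prod_{k=1}^{2n}\lambda_{c_k,e_k}\Bigr)\!\!\sum_{P\in\cC(e_1,\dots,e_{2n})}\!\!\tau(P)\,x(P)\,,
\]
where each~$e_k$ ranges over the oriented edges with~$o(e_k)=v_k$. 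So the task is to identify the double sum on the right with~$\sum_{Q\in\cC(c_1,\dots,c_{2n})}\tau(Q)x(P_Q)$.

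The key combinatorial step is to organize the sum over configurations~$P\in\cC(e_1,\dots,e_{2n})$ according to the underlying configuration~$Q\in\cC(c_1,\dots,c_{2n})$ obtained by attaching, at each vertex~$v_k$, the decoration~$c_k$ in place of the half-edge~$(z_{e_k},t(e_k))$ that sits at~$v_k$ — more precisely, a configuration~$P$ with half-edges emanating at~$v_k$ along~$e_k$ corresponds to the configuration~$Q$ whose bulk part~$P_Q$ is~$P$ minus those half-edges, decorated by~$c_1,\dots,c_{2n}$. The point is that a fixed~$Q$ arises from many pairs~$(P,(e_k)_k)$: near~$v_k$, the decoration~$c_k$ can be ``pulled back'' into any of the edges incident to~$v_k$ that continues the path of~$P_Q$ through~$v_k$, and the coefficients~$\lambda_{c_k,e_k}$ together with the relation~$\rw(c,e)=\rw(c,e'')+\rw(e'',e)$ for angles at~$v$ are exactly designed so that summing~$\prod_k\lambda_{c_k,e_k}\cdot\tau(P)$ over all admissible~$(e_k)_k$ telescopes to~$\tau(Q)\,x(P_Q)/x(P)\cdot x(P)=\tau(Q)x(P_Q)$. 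Here the $x_{e}^{-1/2}$ factors in~$\wh\rB^\top$ cancel the $x_{e}^{1/2}$ weights that the extra half-edges of~$P$ carry in~$G^\diamondsuit$ relative to~$G$, and the factor~$\tfrac12$ in~\eqref{eqn:B-chi-to-phi} is absorbed because each corner~$c$ is incident to two oriented edges~$c^\pm(e)$, so each edge-endpoint of~$P_Q$ at~$v_k$ is counted twice. The winding and~$\eta$ phases in~$\tau(P)$ versus~$\tau(Q)$ differ by precisely the phase~$\exp[\tfrac i2\rw(c_k,e_k)]\eta_{c_k}\overline\eta_{e_k}$ recorded in~$\lambda_{c_k,e_k}$, since replacing the half-edge at~$v_k$ by the decoration~$c_k$ changes the winding of the path~$\gamma$ ending there by the angle from~$e_k$ to~$c_k$.

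I expect the main obstacle to be the careful bookkeeping of signs and phases in this telescoping step: one must verify that~$\sign(s)$ in the definitions of~$\tau(P)$ and~$\tau(Q)$ match up under the correspondence~$P\mapsto Q$ (they do, because the combinatorics of which~$z_{e_k}$, resp.~$c_k$, are joined by a path~$\gamma$ is unchanged), and that the smoothing-independence of both~$\tau(P)$ and~$\tau(Q)$ — already proved via Claim~C — is compatible with the substitution, so that one may smooth~$P$ and~$Q$ consistently. A secondary point is handling the case where several of the~$e_k$ at the same vertex would coincide or be mutually reversed: since the~$v_k$ are assumed pairwise distinct this does not arise within a single vertex, but one should still check that the degenerate terms (in which some~$\phi_e$ repeats) vanish on both sides, which they do by anticommutativity on the Grassmann side and because such~$P$ are excluded from~$\cC(e_1,\dots,e_{2n})$ on the combinatorial side. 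Once these verifications are in place, summing over all~$Q\in\cC(c_1,\dots,c_{2n})$ yields~\eqref{eqn:chi-2n-combinatorial}. The same argument with the weights~$x^\rK(D)$ replaced by~$(-1)^{\varkappa\cdot D}x^\rK(D)$, using Proposition~\ref{prop:multipoint-with-spins} in place of Theorem~\ref{thm:multipoint}, will give the analogous statement in the presence of extra spin insertions, which is what is needed for the equivalence with disorder insertions in Lemma~\ref{lemma:mu-sigma}.
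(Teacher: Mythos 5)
Your guiding idea --- deducing the lemma from Theorem~\ref{thm:multipoint} via the linear change of variables between the corner variables $\chi_c$ and the edge variables $\phi_e$ --- is the right one, but the direction in which you apply it leaves the central step unproved. You expand each $\chi_{c_k}$ as $\sum_{o(e)=v_k}\lambda_{c_k,e}\,\phi_e$ and assert that the resulting double sum ``telescopes'' to $\sum_Q\tau(Q)x(P_Q)$. The coefficients $\lambda_{c,e}$ are entries of the \emph{inverse} of the $d(v)\times d(v)$ block of $\tfrac12\wh\rB^\top$; they are nonzero for \emph{every} oriented edge $e$ at $v$ (not only the two adjacent to $c$, and not only those lying on $P_Q$), and they are given by alternating products of phases around the vertex which you never compute. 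Consequently the key identity --- that $\sum_{(e_k)_k}\prod_k\lambda_{c_k,e_k}\sum_{P\in\cC(e_1,\dots,e_{2n})}\tau(P)x(P)$ collapses onto $\sum_Q\tau(Q)x(P_Q)$ --- is asserted rather than proved: you give no argument for why the many terms with $e_k\notin P_Q$ cancel among themselves. Your bookkeeping is also off in two places: the factor $\tfrac12$ sits in $\phi=\tfrac12\wh\rB^\top\chi$, so the inverse transform carries a factor $2$, and the ``each corner meets two edges'' count you invoke pertains to the forward map, not the one you use; and the correspondence $P\mapsto Q$ is misdescribed, since in $P\in\cC(e_1,\dots,e_{2n})$ the dangling half-edge is $(z_{e_k},t(e_k))$ (so $v_k=o(e_k)$ has \emph{even} degree in $P$) whereas $P_Q$ must have \emph{odd} degree at $v_k$, so one must \emph{add} the half-edge $(o(e_k),z_{e_k})$, not merely strip decorations.

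The paper closes this gap by running the substitution in the opposite direction. Since $\rB$ is block-invertible, it suffices to verify that the \emph{candidate} expansion \eqref{eqn:chi-2n-combinatorial}, inserted into $\phi_e=\tfrac12\bigl(\dots\chi_{c^-(e)}+\dots\chi_{c^+(e)}\bigr)$ (formula \eqref{eqn:phi-is-two-chi}), reproduces the known expansion of $\lan\phi_{e_1}\dots\phi_{e_{2n}}\ran$ from Theorem~\ref{thm:multipoint}. In that direction each $\phi_{e_k}$ involves only the two corners $c^\pm(e_k)$, so only a local two-term cancellation is needed: for configurations $Q^\pm$ differing by the decoration $c^\pm(e_k)$, the quantities $\eta_{c^\pm(e_k)}\exp[\tfrac i2\rw(c^\pm(e_k),e_k)]\tau(Q^\pm)$ coincide when $e_k\in P_Q$ and are opposite otherwise, so the sum restricts to configurations containing all the $e_k$ with decorations $c^+(e_k)$, which are in a weight- and sign-preserving bijection with $\cC(e_1,\dots,e_{2n})$. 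Note also that the hypothesis that the $v_k$ are pairwise distinct is essential (the statement fails without it, cf.\ Remark~\ref{rem:two-formalisms}(i)), and your argument never identifies where it enters.
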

\begin{proof} Recall that each of the variables~$\chi_{c_k}$ is a linear combination of the variables~$\phi_{e_k}$ with~$o(e_k)=v(c_k)$, and the \emph{inverse} transform is given by~\eqref{eqn:B-chi-to-phi}. Thus, in order to prove~\eqref{eqn:chi-2n-combinatorial}, it is enough to check that these equalities yield the correct combinatorial expansions of formal correlations $\lan\phi_{e_1}\dots\phi_{e_{2n}}\!\ran$, which are given by Theorem~\ref{thm:multipoint}.

According to~\eqref{eqn:B-chi-to-phi}, we have
\begin{equation}
\label{eqn:phi-is-two-chi}
\phi_e= {\eta_e} x_e^{-1/2}\cdot {\textstyle\frac{1}{2}}\left({ \overline{\eta}_{c^-(e)}}\exp[{\textstyle\frac{i}{2}}\rw(c^-(e),e)]\chi_{c^-(e)}+ {\overline{\eta}_{c^+(e)}}\exp[{\textstyle\frac{i}{2}}\rw(c^+(e),e)]\chi_{c^+(e)}\right),
\end{equation}
where $c^\pm(e)$ are the two decorations attached to the vertex $o(e)$ neighboring~$e$. Given a pair of configurations~$Q^\pm\in\cC(c^\pm(e_1),c_2,\dots,c_{2n})$ which differ by the decorations~$c^\pm(e_1)$ only, it is easy to check that the quantities~${\overline{\eta}_{c^\pm(e)}}\exp[{\textstyle\frac{i}{2}}\rw(c^\pm(e),e)]\tau(Q^\pm)$ coincide if~$e\in P_Q$ and are {opposite to each other} otherwise. In particular, the two corresponding contributions to~$\lan\phi_{e_1}\dots\phi_{e_{2n}}\!\ran$ cancel out in the latter case. Repeating the same argument for all the other edges~$e_2,\dots,e_{2n}$, one concludes that~\eqref{eqn:chi-2n-combinatorial} is equivalent to the following claim:
\[
\lan\phi_{e_1}\dots\phi_{e_{2n}}\!\ran ~=~ [\cZ_{\opname{Ising}}(G,x)]^{-1}\cdot\!\!\!\!\!\sum_{Q\in\cC^+(e_1,\dots,e_{2n})}\!\!\!\tau^+(Q)x^+(Q)\,,
\]
where the sum is taken over the set
\[
\cC^+(e_1,\dots,e_{2n}):=\{Q\in\cC(c^+(e_1),\dots,c^+(e_{2n})):\,e_1,\dots,e_{2n}\in Q\}\,,
\]
the modified weight of a configuration~$Q$ is given by~$x^+(Q):=x(P_Q)\cdot\prod_{k=1}^{2n}x_{e_k}^{-1/2}\!$ and the modified sign~$\tau^+(Q)$ is given by
\[
\tau^+(Q):=\tau(Q)\cdot\prod_{k=1}^{2n}({ \eta_{e_{k}}^{\vphantom{|}}\overline{\eta}_{c^+(e_{k})}}\exp[{\textstyle\frac{i}{2}}\rw(c^+(e_k),e_k)])\,.
\]

There exists a trivial bijection~$\varsigma:\cC^+(e_1,\dots,e_{2n})\to \cC(e_1,\dots,e_{2n})$: erase all the decorations~$c_k$ and the half-edges~$(o(e_k),z_{e_k})$ from a given configuration~$Q$ to get $\varsigma(Q)$. Clearly, one has~$x(\varsigma(Q))=x^+(Q)$ and it is easy to check that~$\tau(\varsigma(Q))=\tau^+(Q)$ for all~$Q$. Therefore, the collection of equalities~\eqref{eqn:chi-2n-combinatorial} is reduced to the claim of Theorem~\ref{thm:multipoint} and we are done.
\end{proof}

Let us now discuss modifications needed to include additional spin variables in the considerations above. For a given collection of faces~$u_1',\dots,u_m'$, denote
\[
\wh{\rC}_{[u_1,..,u_m]}:=\wh{\rB}\,\wh{\rK}_{[u_1',..,u_m']}\wh{\rB}^\top
\]
and, for a given collection of corners~$c_1,\dots,c_{2n}\in V(G^\rC)$, let
\[
\lan\chi_{c_1}\dots\chi_{c_{2n}}\!\ran_{[u_1',..,u_m']}~:=
\Pf{4(\wh\rC_{[u_1',..,u_m']}^{-1})_{c_j,c_k}}_{j,k=1}^{2n}\,.
\]
Similarly to Section~\ref{sub:grassmann}, this notation implicitly depends on the cuts~$\varkappa'=\varkappa'_{[u_1',..,u_m']}$ linking~$u_1',\dots,u_m'$ and, possibly,~$u_{\opname{out}}$ on~$G^*$, but can be made canonical by lifting to the double-cover~$G^\rC_{[u_1',..,u_m']}$\,. Using Proposition~\ref{prop:multipoint-with-spins} instead of Theorem~\ref{thm:multipoint}, one obtains the following combinatorial expansion which generalizes Lemma~\ref{lemma:chi-2n-combinatorial} in the ``twisted'' setup:
\begin{equation}
\label{eqn:chi-2n-spin-combinatorial}
\lan\chi_{c_1}\dots\chi_{c_{2n}}\!\ran_{[u_1',..,u_m']} ~=~ \frac{\sum_{Q\in\cC(c_1,\dots,c_{2n})}(-1)^{\varkappa'\cdot Q}\tau(Q)x(P_Q)}{\mathbb{E}_{G^*}^+[\sigma_{u_1'}\dots\sigma_{u_m'}]\cdot\cZ_{\opname{Ising}}(G,x)}\,,
\end{equation}
where~$c_1,\dots,c_{2n}$ are thought about as lying on a section of~$G^\rC_{[u_1',..,u_m']}$ constructed via~$\varkappa'$.

We are now able to justify the equivalence of the two formalisms discussed above: Grassmann variables (considered on double-covers) and disorder insertions. The next result claims that the formal correlation functions introduced in Sections~\ref{sub:grassmann} and~\ref{sub:disorders}, respectively, are essentially the same, with the correspondence given by the formal rule~$\chi_{c_k}=\mu_{v_k}\sigma_{u_k}$\,.

\begin{proposition}
\label{prop:equivalence}
Let~$c_1,\dots,c_{2n}\in V(G^\rC)$ be a collection of corners of~$G$ adjacent to pairwise distinct vertices~$v_k=v(c_k)$ and let~$u_k=u(c_k)$ be the face of~$G$ that contains~$c_k$. Then, for an arbitrary collection of faces~$u_1',\dots,u_m'$ of~$G$, one has
\[
\langle\mu_{v_1}\dots\mu_{v_{2n}}\sigma_{u_1}\dots\sigma_{u_{2n}}\sigma_{u_1'}\dots\sigma_{u_{m'}}\rangle ~=~
\pm \lan\chi_{c_1}\dots\chi_{c_{2n}}\!\ran_{[u_1',..,u_m']}\cdot\,\langle\sigma_{u_1'}\dots\sigma_{u_m'}\rangle\,,
\]
where the sign depends on the choice of representatives of the faces~$u_1,\dots,u_{2n},u_1',\dots,u_m'$ on the double-cover~$G^{[v_1,..,v_{2n}]}$ and representatives of the corners~$c_1,\dots,c_{2n}$ on~$G_{[u_1',\dots,u_m']}$\,. Above, we do \emph{not} assume that the faces~$u_1,\dots,u_{2n},u_1',\dots,u_m'$ are pairwise distinct.
\end{proposition}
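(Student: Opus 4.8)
The plan is to show that, once combined, the combinatorial expansions already established for each side reduce the statement to a term-by-term comparison of two identical sums, the only residual freedom being a global $\pm$ sign.

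First I would note that, in the absence of disorder insertions, the bracket notation of Section~\ref{sub:disorders} is simply the ordinary Ising expectation, so that $\langle\sigma_{u_1'}\dots\sigma_{u_m'}\rangle=\mathbb{E}^+_{G^*}[\sigma_{u_1'}\dots\sigma_{u_m'}]$. Hence, multiplying the combinatorial expansion~\eqref{eqn:chi-2n-spin-combinatorial} of $\lan\chi_{c_1}\dots\chi_{c_{2n}}\!\ran_{[u_1',..,u_m']}$ by this quantity cancels the factor $\mathbb{E}^+_{G^*}[\sigma_{u_1'}\dots\sigma_{u_m'}]$ in the denominator of~\eqref{eqn:chi-2n-spin-combinatorial} and leaves
\[
\lan\chi_{c_1}\dots\chi_{c_{2n}}\!\ran_{[u_1',..,u_m']}\cdot\langle\sigma_{u_1'}\dots\sigma_{u_m'}\rangle=[\cZ_{\opname{Ising}}(G,x)]^{-1}\!\!\!\!\sum_{Q\in\cC(c_1,\dots,c_{2n})}\!\!\!\!(-1)^{\varkappa'\cdot Q}\tau(Q)x(P_Q),
\]
where $\varkappa'=\varkappa'_{[u_1',..,u_m']}$ is the fixed cut collection and $\cC(c_1,\dots,c_{2n})$ the decorated configuration set~\eqref{eqn:def-corners-conf}. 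On the other hand, Remark~\ref{rem:x-mu-sigma} — the additional-spin generalization of Lemma~\ref{lemma:mu-sigma}, itself a consequence of Proposition~\ref{prop:multipoint-with-spins} — states precisely that $\langle\mu_{v_1}\dots\mu_{v_{2n}}\sigma_{u_1}\dots\sigma_{u_{2n}}\sigma_{u_1'}\dots\sigma_{u_m'}\rangle$ equals $\pm$ the very same sum over $\cC(c_1,\dots,c_{2n})$ divided by $\cZ_{\opname{Ising}}(G,x)$. Comparing the two displays proves the proposition, the $\pm$ being the sign already produced by Remark~\ref{rem:x-mu-sigma}; it depends on the chosen representatives of $u_1,\dots,u_{2n},u_1',\dots,u_m'$ on $G^{[v_1,..,v_{2n}]}$ and, through~\eqref{eqn:chi-2n-spin-combinatorial}, on the representatives of $c_1,\dots,c_{2n}$ on $G_{[u_1',\dots,u_m']}$.

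The point that needs attention — and where essentially all the (light) work sits — is verifying that the objects $\cC(c_1,\dots,c_{2n})$, $\tau(Q)$, $x(P_Q)$ and the twist $(-1)^{\varkappa'\cdot Q}$ carry literally the same meaning in Remark~\ref{rem:x-mu-sigma} and in~\eqref{eqn:chi-2n-spin-combinatorial}, so that the two sums coincide term by term: one fixes the same $\varkappa'$ on both sides, uses the same convention for counting intersections of $\varkappa'$ with the decorations and half-edges recorded in $Q$ (including the case where some of the $u_k$ or $u_k'$ coincide), and checks that the spin-side double cover branched over $v_1,\dots,v_{2n}$ with $\sigma_{u^\sharp}=-\sigma_u$ is compatible with the Grassmann-side double cover branched over $u_1',\dots,u_m'$ with $\phi_{e^\sharp}=-\phi_e$ (see~\eqref{eqn:phi-spinor}) — which is exactly what the formal identification $\chi_{c_k}=\mu_{v_k}\sigma_{u_k}$ encodes. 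No ingredient beyond Theorem~\ref{thm:multipoint} and Proposition~\ref{prop:multipoint-with-spins} enters; the proposition is a bookkeeping corollary of the combinatorial expansions those results provide.
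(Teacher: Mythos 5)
Your proposal is correct and follows essentially the same route as the paper: identify $\langle\sigma_{u_1'}\dots\sigma_{u_m'}\rangle$ with $\mathbb{E}^+_{G^*}[\sigma_{u_1'}\dots\sigma_{u_m'}]$, then observe that Remark~\ref{rem:x-mu-sigma} and formula~\eqref{eqn:chi-2n-spin-combinatorial} give the two sides identical combinatorial expansions over $\cC(c_1,\dots,c_{2n})$ up to a global sign. The only slip is one of provenance: Lemma~\ref{lemma:mu-sigma} and Remark~\ref{rem:x-mu-sigma} are proved directly from the domain-walls representation (via Lemma~\ref{lemma:Whitney}), not as consequences of Proposition~\ref{prop:multipoint-with-spins}, which is instead what underlies the Grassmann-side expansion~\eqref{eqn:chi-2n-spin-combinatorial}.
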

\begin{proof}
Let us consider the special case~$m=0$ first. In this case, the equality
\[
\langle\mu_{v_1}\dots\mu_{v_{2n}}\sigma_{u_1}\dots\sigma_{u_{2n}}\rangle ~=~\pm \lan\chi_{c_1}\dots\chi_{c_{2n}}\!\ran
\]
directly follows from Lemma~\ref{lemma:mu-sigma} and Lemma~\ref{lemma:chi-2n-combinatorial} since both sides have identical combinatorial expansions. In the general situation, one just uses Remark~\ref{rem:x-mu-sigma} and formula~\eqref{eqn:chi-2n-spin-combinatorial} instead of these lemmas. The claim follows since~$\langle\sigma_{u_1'}\dots\sigma_{u_m'}\rangle=\mathbb{E}_{G^*}^+[\sigma_{u_1'}\dots\sigma_{u_m'}]$.
\end{proof}

\begin{remark}
\label{rem:two-formalisms}
\emph{(i)} It is worth noting that Lemma~\ref{lemma:chi-2n-combinatorial} and Proposition~\ref{prop:equivalence} \emph{fail} if one drops the assumption that the vertices~$v_k=v(c_k)$ are pairwise distinct. Indeed, if we consider two edges~$e_1,e_2$ such that~$c^+(e_1)=c^-(e_2)$, then the product~$\phi_{e_1}\phi_{e_2}$ is \emph{not} equal to the sum of \emph{four} terms since $\chi_{c^+(e_1)}\chi_{c^-(e_2)}=0$. Instead, we have only three terms and the combinatorial correspondence of configurations used in the proof of Lemma~\ref{lemma:chi-2n-combinatorial} breaks down.

\noindent \emph{(ii)} One can easily make sense of the notation~$\lan\phi_e\chi_c\dots\ran_{[u_1,..,u_m]}$, with the variables labeled by oriented edges \emph{or} corners of the canonical double-cover~$G_{[u_1,..,u_m]}$. In order to define these quantities, just rewrite all participating variables using one of the two sets~$\phi_e$ or~$\chi_c$, and compute the arising linear combination of the terms~\mbox{$\lan\phi_e\phi_{e'}\dots\ran_{[u_1',..,u_m']}$} or~\mbox{$\lan\chi_{c'}\chi_c\dots\ran_{[u_1',..,u_m']}$}\,: the result does not depend on which set of variables was used.  Following the proof of Lemma~\ref{lemma:chi-2n-combinatorial}, it is easy to obtain \emph{combinatorial expansions} of such quantities in the situation when all the corresponding vertices~$v(c_k)$ are pairwise distinct and do not coincide with the vertices~$o(e_j)$ for~$\phi_{e_j}$ involved in the formal correlation function under consideration.
\end{remark}

\subsection{Three-term relation for correlation functions} \label{sub:three-term}
It is well-known that the formal correlation functions~$\langle\chi_c\dots\rangle$ involving any three of the four corners surrounding a given edge~$e$ of~$G$ satisfy a linear relation known as the \emph{propagation equation} for discrete spinors or the Dotsenko equation. The latter name was suggested in~\cite{Mercat-CMP} to acknowledge the paper~\cite{dotsenko-dotsenko} where this propagation equation was discussed in the ``combinatorial'' context of the disorder insertions formalism, {though it is worth mentioning that similar relations appeared earlier, e.g. in the works of Perk~\cite{Perk,Perk-Dubna81}}. Below we start with a short derivation given in~\cite{dotsenko-dotsenko} and then discuss this equation from the (equivalent) viewpoint of the Grassmann variables formalism.

Informally speaking, the main idea is to apply the Kramers-Wannier duality \emph{locally} on a given edge~$e$. It is convenient to introduce the following parametrization of the edge weights:
\[
\theta_e:=2\arctan x_e\,,\qquad p_e:=\cos\theta_e =\frac{1-x_e^2}{1+x_e^2}\,,\qquad q_e:=\sin\theta_e=\frac{2x_e}{1+x_e^2}\,.
\]
By the definition of disorder insertions (see Section~\ref{sub:disorders}) and the equality~$q_ex_e^{\reps_e}=1-p_e\reps_e$ for~$\reps_e=\pm 1$, for any combination~$\cO[\mu,\sigma]$ of spins and (even number of) disorders, we have
\begin{align}
\label{eqn:mu-tau-local-duality}
q_e\cdot \langle\mu_{o(e)}\mu_{t(e)}\cO[\mu,\sigma]\rangle ~ & = ~ q_e\cdot\langle x_e^{\reps_e}\,\cO[\mu,\sigma] \rangle \notag \\ & = \langle \cO[\mu,\sigma] \rangle - p_e\cdot\langle \sigma_{u^-(e)}\sigma_{u^+(e)}\cO[\mu,\sigma] \rangle\,,
\end{align}
where~$\reps_e=\sigma_{u^-(e)}\sigma_{u^+(e)}$ and~$u^\pm(e)=u(c^\pm(e))$ are the two faces of~$G$ adjacent to the oriented edge~$e$, with~$u_-(e)$ being to the right and~$u^+(e)$ to the left of~$e$.
Note that the set of disorders involved in the left-hand side of this equality differs from that in the right-hand side, so one should be careful with the signs of the formal correlations even though there is a trivial correspondence between the faces of these double-covers. Above, the faces~$u^\pm(e)$ are assumed to be adjacent on the double-cover used to define the correlation~$\langle\cO[\mu,\sigma]\rangle$.

Let us now replace the collection of spins and disorders~$\cO[\mu,\sigma]$ by~$\mu_{o(e)}\sigma_{u^+(e)}\cO[\mu,\sigma]$ and recall that any repeating variables in these formal correlations cancel out. Rewriting~\eqref{eqn:mu-tau-local-duality} (note that now~$\cO[\mu,\sigma]$ must contain an odd number of disorders), one obtains
\begin{equation}
\label{eqn:three-term-mu-sigma}
\langle \mu_{o(e)}\sigma_{u^+(e)}\cO[\mu,\sigma] \rangle = p_e\cdot \langle \mu_{o(e)}\sigma_{u^-(e)}\cO[\mu,\sigma] \rangle  + q_e\cdot\langle\mu_{t(e)}\sigma_{u^+(e)}\cO[\mu,\sigma]\rangle\,,
\end{equation}
with a proper correspondence between the involved double-covers.
\begin{remark}
\label{rem:corner-universal-double-cover}
There exists a way to make this correspondence of double-covers canonical. Let~$\wh{G}^\rC$ denote the corner graph~$G^\rC$ with all the intersecting edges removed. Given~$\cO[\mu,\sigma]$, one considers a double-cover~$\wh{G}^\rC_{\cO[\mu,\sigma]}$ of~$\wh{G}^\rC$ branching around \emph{all} the vertices and the faces of~$G$ that are \emph{not} involved in~$\cO[\mu,\sigma]$, as well as around all the {edges} of~$G$. On this double-cover, the formal correlations~$\langle\mu_{v(c)}\sigma_{u(c)}\cO[\mu,\sigma]\rangle$ defined in Section~\ref{sub:disorders} obey the sign-flip symmetry between the sheets and satisfy \eqref{eqn:three-term-mu-sigma} around all the edges, see~\cite[pp.~209--210]{Mercat-CMP}.
\end{remark}

\begin{remark}
\label{rem:ad-hoc-combinatorics}
The propagation equation~\eqref{eqn:three-term-mu-sigma} can easily be derived using Lemma~\ref{lemma:mu-sigma} (or its generalization provided in Remark~\ref{rem:x-mu-sigma}) and playing with the natural correspondence (given by adding/removing the edge~$e$) between the sets of subgraphs of~$G$ involved in the relevant combinatorial expansions. This approach is conceptually equivalent to the derivation given above, but it allows one {to change the viewpoint} and to use these combinatorial expansions as (slightly mysterious) \emph{ad~hoc} \emph{definitions} of the objects of interest, making use of very elementary concepts only. Such a shortcut was advertised by Smirnov~\cite{smirnov-icm-2006,smirnov-icm-2010} and is very useful when working with complex-valued fermionic observables, see Section~\ref{sub:s-observables} for details.
\end{remark}

We now discuss how one can see the three-point relation~\eqref{eqn:three-term-mu-sigma} using the intrinsic structure of the Kac--Ward matrices or, more precisely, the matrices~$\rC_{[u_1,\dots,u_m]}$. Let us introduce a matrix~$\rY$ whose entries are labeled by the corners of~$G$ as follows:
\[
\rY_{c,c'}:=\begin{cases} \exp[{\textstyle\frac{i}{2}}\rw(c,\overline{c}')] & \text{if~$v(c)=v(c')$~but~$c\ne c'$;} \\
0 & \text{otherwise.}
\end{cases}
\]
Note that the matrix~$\rY$ is Hermitian and has a block-diagonal structure with blocks corresponding to vertices of~$G$. Further, for a given collection of cuts~$\varkappa=\varkappa_{[u_1,..,u_m]}$ linking the faces~$u_1,\dots,u_m$ and, possibly,~$u_{\opname{out}}$ on~$G^*$, let
\[
\left(\rD_{[u_1,..,u_m]}\right)_{c,c'}=\begin{cases} -i & \text{if~$c=c'$;} \\
p_e\cdot\exp[{\textstyle\frac{i}{2}}\rw(c,\overline{c}')] & \text{if~$c=c^+(e)$~and~$c'=c^-(e)$~for~some~$e$;} \\
q_e\cdot(-1)^{\varkappa\cdot e}\exp[{\textstyle\frac{i}{2}}\rw(c,\overline{c}')] & \text{if~$c=c^+(e)$~and~$c'=c^-(\overline{e})$~for~some~$e$}; \\
0 & \text{otherwise,}
\end{cases}
\]
where the rotation angle~$\rw(c,\overline{c}')$ in the third line is measured along the path~$c\oplus e\oplus \overline{c}'$.
\begin{remark}
\label{rem:spinors-functions}
The operator~$\rD_{[u_1,..,u_m]}$ can be viewed as the ``untwisted'' operator~$\rD$ acting on functions defined on the double-cover~$G^\rC_{[u_1,..,u_m]}$ and obeying a sign-flip symmetry between the sheets; such functions are sometimes called~\emph{spinors}. From this viewpoint, the above definition describes the action of~$\rD_{[u_1,..,u_,]}$ on a section of~$G^\rC_{[u_1,..,u_m]}$ given by the cuts~$\varkappa_{[u_1,..,u_m]}$\,.
\end{remark}

\begin{lemma} \label{lemma:C-three-terms}
For any collection of faces~$u_1,\dots,u_m$ of~$G$, the following identity holds:
\begin{equation}
\label{eqn:C^-1=D^-1}
4\rC^{-1}_{[u_1,..,u_m]}+\rY+i\rI = 2\rD^{-1}_{[u_1,..,u_m]}\,.
\end{equation}
\end{lemma}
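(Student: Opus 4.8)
The plan is to recast \eqref{eqn:C^-1=D^-1} as a \emph{forward} identity. Multiplying both sides on the left by $\tfrac12\rD_{[u_1,..,u_m]}$, it is equivalent to
\[
\rD_{[u_1,..,u_m]}\bigl(2\,\rC^{-1}_{[u_1,..,u_m]}+\tfrac12\rY+\tfrac i2\rI\bigr)=\rI\,,
\]
i.e.\ to the statement that, for every fixed corner $c_0\in V(G^\rC)$, the vector $g=g_{c_0}$ with entries $g_c:=2(\rC^{-1}_{[u_1,..,u_m]})_{c,c_0}+\tfrac12\rY_{c,c_0}+\tfrac i2\delta_{c,c_0}$ is the $c_0$-th column of $\rD^{-1}_{[u_1,..,u_m]}$, i.e.\ $\rD_{[u_1,..,u_m]}g=\delta_{\cdot,c_0}$. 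Since each corner $c$ equals $c^+(e)$ for a unique oriented edge $e=e(c)$, the $c$-th row of $\rD_{[u_1,..,u_m]}$ is supported on $\{c,c^-(e),c^-(\overline{e})\}$, so $(\rD_{[u_1,..,u_m]}g)_c$ is an explicit combination of $g_c$, $g_{c^-(e)}$ and $g_{c^-(\overline{e})}$ with coefficients $-i$, $p_e\exp[\tfrac i2\rw(c,\overline{c^-(e)})]$ and $q_e(-1)^{\varkappa\cdot e}\exp[\tfrac i2\rw(c,\overline{c^-(\overline{e})})]$. Thus the whole claim reduces to a purely local verification around the edge $e$.

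The key input for the ``$\rC^{-1}$-part'' of $(\rD_{[u_1,..,u_m]}g)_c$ is the propagation (Dotsenko) equation \eqref{eqn:three-term-mu-sigma}, already established above. By the equivalence of formalisms (Proposition~\ref{prop:equivalence}) together with the combinatorial expansion \eqref{eqn:chi-2n-spin-combinatorial} — or, equivalently, directly from $\wh\rC_{[u_1,..,u_m]}=\wh\rB\,\wh\rK_{[u_1,..,u_m]}\wh\rB^\top$ and the sign cancellation used in the proof of Lemma~\ref{lemma:chi-2n-combinatorial} — the entries $(\rC^{-1}_{[u_1,..,u_m]})_{c,c_0}$ coincide, up to the explicit factor $\tfrac i4\,\overline{\eta}_c\eta_{c_0}$, with the two-point correlations $\lan\chi_c\chi_{c_0}\!\ran$ in the twisted setup defined by $\varkappa_{[u_1,..,u_m]}$; and \eqref{eqn:three-term-mu-sigma}, read on a section of $G^\rC_{[u_1,..,u_m]}$ chosen so as to produce exactly the signs $(-1)^{\varkappa\cdot e}$, gives the three-term relation $\lan\chi_{c}\chi_{c_0}\!\ran=p_e\lan\chi_{c^-(e)}\chi_{c_0}\!\ran+q_e(-1)^{\varkappa\cdot e}\lan\chi_{c^-(\overline{e})}\chi_{c_0}\!\ran$ whenever $c_0$ is attached to a vertex of $G$ different from both $o(e)$ and $t(e)$. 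Plugging this into $(\rD_{[u_1,..,u_m]}g)_c$ and using the elementary phase identities $\exp[\tfrac i2\rw(c,\overline{c^-(e)})]=i\,\overline{\eta}_c\eta_{c^-(e)}$ and its analogue measured along the path $c\oplus e\oplus\overline{c^-(\overline{e})}$ (these are built into the definitions of $\rw(\cdot,\overline{\cdot})$, of the square roots $\eta_{\cdot}$, and of the parametrization $p_e=\cos\theta_e$, $q_e=\sin\theta_e$), the $\rC^{-1}$-contribution vanishes in this generic case. It then remains to treat the ``collision'' cases $v(c_0)\in\{o(e),t(e)\}$ — in particular $c_0=c$ — where \eqref{eqn:three-term-mu-sigma} acquires a correction term because two Grassmann variables would sit at the same vertex of $G$ (cf.\ Remark~\ref{rem:two-formalisms}(i)): inspecting the relevant combinatorial expansions, as in Remark~\ref{rem:ad-hoc-combinatorics}, one checks that the $\rY$- and $i\rI$-entries of $g$, which are supported precisely at those collisions, combine with this correction to produce exactly $\delta_{c,c_0}$. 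This yields $\rD_{[u_1,..,u_m]}g=\delta_{\cdot,c_0}$ and hence \eqref{eqn:C^-1=D^-1}.

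I expect the main obstacle to be exactly this last point: the careful sign and phase bookkeeping in the collision cases. One must verify simultaneously that (i) the section of $G^\rC_{[u_1,..,u_m]}$ used in Proposition~\ref{prop:equivalence} reproduces the signs $(-1)^{\varkappa\cdot e}$ in the definition of $\rD_{[u_1,..,u_m]}$, (ii) the winding phases $\exp[\tfrac i2\rw(\cdot,\overline{\cdot})]$ measured along the paths $c\oplus e\oplus\overline{c'}$ are precisely the ones cancelling the $\eta$-factors relating $\rC^{-1}$-entries to $\chi$-correlations, and (iii) the term missing from the collision version of \eqref{eqn:three-term-mu-sigma} is exactly compensated by the block-diagonal data $\tfrac12(\rY+i\rI)$. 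As an alternative route avoiding correlations altogether, one can instead verify the equivalent matrix identity $\rC_{[u_1,..,u_m]}(\rY+i\rI)\rD_{[u_1,..,u_m]}=2\bigl(\rC_{[u_1,..,u_m]}-2\rD_{[u_1,..,u_m]}\bigr)$ by a direct expansion of entries: the ``distance-two'' contributions cancel because of the alternation between the within-vertex part and the edge part of $\rC_{[u_1,..,u_m]}$, and the remaining entries match using $p_e^2+q_e^2=1$ and additivity of the angles $\rw(\cdot,\cdot)$; this is computationally heavier but has no collision subtleties.
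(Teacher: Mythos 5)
Your main route --- deriving \eqref{eqn:C^-1=D^-1} from the propagation equation \eqref{eqn:three-term-mu-sigma} via the equivalence of formalisms --- is exactly the program sketched in Remark~\ref{rem:chi-adjusted}, but it is not how the paper proves the lemma, and as written it has a genuine gap at the point you yourself flag. The generic part is fine: for $c=c^+(e)$ and $v(c_0)\notin\{o(e),t(e)\}$ the entries $(\rC^{-1}_{[u_1,..,u_m]})_{\cdot,c_0}$ are identified, up to explicit phases, with the twisted correlations $\lan\chi_{\cdot}\chi_{c_0}\!\ran_{[u_1,..,u_m]}$, the three-term relation applies, and $(\rD_{[u_1,..,u_m]}\cdot 2\rC^{-1}_{[u_1,..,u_m]})_{c,c_0}$ vanishes, consistently with the fact that $(\rD_{[u_1,..,u_m]}(\rY+i\rI))_{c,c_0}$ is also supported on $v(c_0)\in\{o(e),t(e)\}$. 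But in the collision cases the identification of $\rC^{-1}$-entries with disorder correlations is precisely what \emph{fails} (Remark~\ref{rem:two-formalisms}(i)), so neither \eqref{eqn:three-term-mu-sigma} nor the combinatorial expansion of Lemma~\ref{lemma:chi-2n-combinatorial} can be invoked for those entries. The assertion that the $\rY$- and $i\rI$-terms ``combine with this correction to produce exactly $\delta_{c,c_0}$'' is the entire content of the lemma beyond the propagation equation; ``inspecting the relevant combinatorial expansions'' does not close it, because one would first have to derive the corrected expansions of $\rC^{-1}_{c',c_0}$ for corners sharing a vertex, which is nowhere done. Worse, those collision entries of $\rD_{[u_1,..,u_m]}\cdot 2\rC^{-1}_{[u_1,..,u_m]}$ are not local data, so the only practical way to evaluate them is to multiply the identity through by $\rC_{[u_1,..,u_m]}$ --- i.e., to fall back on a direct computation.

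That fallback, your last sentence, is essentially the paper's proof. The paper rewrites \eqref{eqn:C^-1=D^-1} as the local identity $\rC_{[u_1,..,u_m]}=\rD_{[u_1,..,u_m]}\cdot[2\rI+\frac12(\rY+i\rI)\rC_{[u_1,..,u_m]}]$ (your version, $\rC_{[u_1,..,u_m]}(\rY+i\rI)\rD_{[u_1,..,u_m]}=2(\rC_{[u_1,..,u_m]}-2\rD_{[u_1,..,u_m]})$, is the same identity with the factors multiplied on the other sides) and checks it in two steps, first recording the entries of the intermediate matrix $\rS_{[u_1,..,u_m]}=\frac12(\rY+i\rI)\rC_{[u_1,..,u_m]}$ in \eqref{eqn:Q-entries} and then computing the product with $\rD_{[u_1,..,u_m]}$; no collision analysis is needed there. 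I would promote your ``alternative route'' to the main proof (splitting the triple product via such an intermediate matrix keeps it manageable) and keep the propagation-equation derivation as motivation, which is exactly the role the paper assigns to it.
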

\begin{remark}
\label{rem:chi-adjusted}
Before giving a proof of this identity, recall that~$\wh\rC={i\rU_\rC^*\rC\rU_\rC^{\vphantom{*}}}$ and introduce the \emph{real-valued} counterparts of the matrices~$\rY$ and~$\rD_{[u_1,..,u_m]}$ defined by
\[
\wh\rY:={i\rU_\rC^*\rY\rU_\rC^{\vphantom{*}}}\quad \text{and}\quad \wh\rD_{[u_1,..,u_m]}:={ i\rU_\rC^*\rD_{[u_1,..,u_m]}\rU_\rC^{\vphantom{*}}}\,.
\]
Then we have~$\wh\rD_{[u_1,..,u_m]} \cdot [4\wh\rC^{-1}_{[u_1,..,u_m]} - \wh\rY + \rI] = 2\rI$.
In other words, for any oriented edge~$e$ and any corner~$c\neq c^+(e)$, the quantities
\[
\lan\chi_{c'}\chi_{c}\ran_{[u_1,..,u_m]}-\wh\rY_{c',c}+\rI_{c',c}\,,\quad \text{where}\quad c'=c^\pm(e)\ \ \text{or}\ \ c'=c^-(\overline{e})\,,
\]
satisfy a three-term linear relation with coefficients provided by the matrix~$\wh{\rD}_{[u_1,..,u_m]}$\,, and one should replace the last term~$\rI_{c',c}$ by $-\rI_{c',c}$ if~$c=c^+(e)$. The local terms~$(-\wh\rY\pm\rI)_{c',c}$ compensate the mismatch between the Grassmann variables formalism and disorder insertions in the situation~$v(c')=v(c)$, see Remark~\ref{rem:two-formalisms}. Modulo this local adjustment of the formal correlation functions, equation~\eqref{eqn:C^-1=D^-1} is equivalent to the propagation equation~\eqref{eqn:three-term-mu-sigma} for two-disorders correlations. Its extension to~$2n$ disorders (equivalently,~$2n$ Grassmann variables~$\chi_c$) is then provided by the Pfaffian identities and linearity.
\end{remark}

\begin{proof}[Proof of Lemma~\ref{lemma:C-three-terms}]
Equality~\eqref{eqn:C^-1=D^-1} is equivalent to the following claim:
\begin{equation}
\label{eqn:C-factorization}
\rC_{[u_1,..,u_m]}= \rD_{[u_1,..,u_m]}\cdot [2\rI+{\textstyle\frac{1}{2}}(\rY+i\rI)\rC_{[u_1,..,u_m]}]\,,
\end{equation}
which can be easily checked in two steps. One begins by computing the matrix
\[
\rS_{[u_1,..,u_m]}:={\textstyle\frac{1}{2}}(\rY+i\rI)\rC_{[u_1,..,u_m]}\,,
\]
whose entries are given by
\begin{equation}
\label{eqn:Q-entries}
\left(\rS_{[u_1,..,u_m]}\right)_{c,c'}=\begin{cases}
-1 & \text{if~$c=c'$;} \\
-i\exp[\frac{i}{2}\rw(c,\overline{c}')] & \text{if~$c=c^-(e)$~and~$c'=c^+(e)$;} \\
i\exp[\frac{i}{2}\rw(c,\overline{c}')]\cdot(-1)^{\varkappa\cdot e}x_e^{-1} & \text{if~$c=c^-(e)$~and~$c'=c^\pm(\overline{e})$;}\\
0 & \text{otherwise,}
\end{cases}
\end{equation}
where the rotation angles~$\rw(c,\overline{c}')$ for~$c'=c^\pm(\overline{e})$ in the third line are measured along~$c\oplus e \oplus \overline{c}'$.
Then, another straightforward computation leads to~\eqref{eqn:C-factorization}.
\end{proof}

Let the matrix~$\rW_{[u_1,..,u_m]}$ be defined by
\[
\left(\rW_{[u_1,..,u_m]}\right)_{c,c'}:=\begin{cases} \exp[\frac{i}{2}\rw(c,\overline{c}')] & \text{if~$c=c^-(e)$~and~$c'=c^+(e)$;} \\
-\exp[\frac{i}{2}\rw(c,\overline{c}')]\cdot(-1)^{\varkappa\cdot e}x_e^{-1} & \text{if~$c=c^-(e)$~and~$c'=c^+(\overline{e})$;}\\
0 & \text{otherwise,}\end{cases}
\]
and note that~$\rW_{[u_1,..,u_m]}\rW_{[u_1,..,u_m]}^*$ is a \emph{diagonal} matrix with entries~$1+x_e^{-2}$ for~$c=c^\pm(e)$\,. A straightforward computation shows
\begin{align*}
{\textstyle\frac{1}{2}}(\rY+i\rI)\rC_{[u_1,..,u_m]} & = \rW_{[u_1,..,u_m]}\rD_{[u_1,..,u_m]}\,.
\end{align*}
Similarly, one can easily see that
\[
{\textstyle\frac{1}{2}}(\rY-i\rI)\rC_{[u_1,..,u_m]} = \rW_{[u_1,..,u_m]}^*\rD_{[u_1,..,u_m]}^*\,,
\]
for instance by checking the identity
\[
i\rC_{[u_1,..,u_m]}= \rW_{[u_1,..,u_m]}\rD_{[u_1,..,u_m]} - \rW_{[u_1,..,u_m]}^*\rD_{[u_1,..,u_m]}^*\,.
\]
\begin{remark}\label{rk:d-bar}
\emph{(i)} Since~$\rD_{[u_1,..,u_m]}$ can be {thought of} as some~$\overline{\partial}$-type operator acting on the corresponding double-cover~$G^\rC_{[u_1,..,u_m]}$,  identities of this sort are useful when studying the links between the Kac--Ward matrices
and discrete holomorphic functions, see~\cite{cimasoni-AIHP} for the discussion of general surface graphs.

\noindent \emph{(ii)}
Since the matrix $\rW\rW^*$ is diagonal, the last representation of the quadratic form~$\chi^\top\wh\rC\chi$ provides an appropriate {starting point for the} interpretation of the {(scaling limit of the)} Ising model defined on a general planar graph~$G$ from the \emph{``Free Fermionic Field''} {perspective}, cf.~\cite{Lieb-fermions,Itzykson-fermions,Plechko} and~\cite[Section 2.1.2]{YellowBook},~{\cite[Section 9.7]{MussardoBook}}.
\end{remark}

\subsection{Complex-valued fermionic observables and s-holomorphicity} \label{sub:s-observables}

The aim of this section is to discuss the complex-valued versions of the formal correlation functions (aka discrete fermionic observables) introduced in Section~\ref{sub:grassmann}--\ref{sub:equivalence}. Those can be defined as simple linear combinations of the real-valued ones, so one should not expect a major difference between the two viewpoints. Nevertheless, it turns out that the complex-valued observables are much better adapted to the analysis of \emph{boundary value problems} arising when studying the scaling limit of the Ising model in general planar domains. At the same time, they can be constructed \emph{ad~hoc} in a purely combinatorial way and all the needed local relations follow easily, cf.~Remark~\ref{rem:ad-hoc-combinatorics}. Such a \emph{definition} was advertised by Smirnov in the 2000s (see~\cite[Section~4]{smirnov-icm-2010} for historical remarks) and then used in a series of recent papers of Chelkak, Duminil-Copin, Hongler, Izyurov, Kemppainen, Kyt\"ol\"a and others devoted to the conformal invariance of correlation functions and interfaces arising in the scaling limit of the critical Ising model in bounded planar domains.

For simplicity, below we mostly discuss the ``untwisted'' situation. As usual, to handle the general case one should consider a relevant double-cover and work with spinors defined on this cover instead of functions defined on~$V(G^\rK)$ or~$V(G^\rC)$, see~\eqref{eqn:phi-spinor} and Remark~\ref{rem:corner-universal-double-cover}. {To simplify the notation, we assume that the global unimodular factor in the definition of $\eta_e$ and $\eta_c$ is chosen as $\zeta=e^{i\frac{\pi}{4}}$.} For the midpoint~$z_e$ of an edge~$e\in E(G)$, define
\begin{equation}
\label{eqn:psi(z_e)-def}
\psi(z_e):= {t_e\cdot (\eta_e\phi_e + \eta_{\bar{e}}\phi_{\bar{e}}),}
\end{equation}
where the additional normalizing factor~$t_e:=(x_e+x_e^{-1})^{1/2}$ is added for later convenience; note that~$\psi(z_e)$ does not depend on the orientation of~$e$. This allows us to speak about formal correlation functions of these new variables like~$\lan\psi(z_e)\phi_a\ran$ or~$\lan\psi(z_e)\chi_c\ran$.
In particular, for a given oriented edge~$a\in V(G^\rK)$ and~$z_e\neq z_a$, Theorem~\ref{thm:multipoint} implies
\begin{equation}
\label{eqn:stas-observable-def}
F_a(z_e)~:=~\lan \psi(z_e)\phi_a\ran~=~\frac{{(-i\eta_a)\,\cdot}\sum_{P\in \cC (a,z_e)}\exp[-{\textstyle\frac{i}{2}}\wind(\gamma_P)]\, t_e x(P)}{\cZ_{\opname{Ising}}(G,x)}\,,
\end{equation}
where~$\cC(a,z_e):=\cC(a,e)\cup\cC(a,\overline{e})$, the non-self-intersecting curve~$\gamma_P$ is obtained from a configuration~$P\in\cC(a,z_e)$ by an arbitrary resolution of all its crossings, and~$\wind(\gamma_P)$ stands for the total rotation angle of the (velocity vector of this) curve~$\gamma_P$ when it runs from~$a$ to~$z_e$. Among other papers, this \emph{combinatorial definition} can be found:
\begin{itemize}
\item In the original work of Smirnov devoted to the understanding of the scaling limit of \emph{interfaces} (domain walls) arising in the critical Ising model on the square lattice, with~$a$ being a boundary edge, see~\cite[Section~4]{smirnov-icm-2010} for references.
\item In the paper~\cite{chelkak-smirnov} devoted to the \emph{universality} of these scaling limits for the critical Ising models defined on arbitrary isoradial graphs; note that the normalizing factor~$t_e$ introduced above matches the factor~$(\cos{\textstyle\frac{1}{2}}\theta_e)^{-1}=(1+x_e^2)^{1/2}$ used in~\cite[Section~2.2]{chelkak-smirnov} since we included the half-weight of the last edge~$e$ into~$x(P)$.
\item In the paper~\cite{hongler-smirnov} and the PhD thesis~\cite{Hongler-thesis} of Hongler, which is devoted to the study of the scaling limit of the~\emph{energy density field} in the critical Ising model in bounded planar domains (on the square lattice), with~$a$ being an internal edge.
\item In the paper~\cite{Chelkak-Izyurov} and the PhD thesis~\cite{izyurov-thesis} of Izyurov, where the \emph{spinor version} of~\eqref{eqn:stas-observable-def} was first suggested as a tool to study scaling limits of (ratios of) spin correlations and interfaces in the critical Ising model considered in multiply-connected domains.
\item In the paper~\cite{CHI} devoted to the study of the scaling limit of the~\emph{spin field} in the critical Ising model in bounded planar domains (on the square lattice), where the branching ``source-at-corner'' observable~$\lan\psi(z_e)\chi_c\ran_{[u_1,..,u_m]}$ was used, with~$u_1=u(c)$.
\end{itemize}

Further, with a slight abuse of notation, let us denote
\[
\psi(c):={\eta_c}\chi_c\quad\text{for}~c\in V(G^\rC)
\]
and extend definition~\eqref{eqn:stas-observable-def} of the function~$F_a(\cdot)$ from the set of midedges~$z_e$ of the graph~$G$ to the set of its corners by defining, for~$c\in V(G^\rC)$,
\[
F_a(c)~:=~\lan\psi(c)\phi_a\ran~\in~{\eta_c}\R\,.
\]
These quantities admit combinatorial expansions similar to~\eqref{eqn:stas-observable-def}, see Lemma~\ref{lemma:chi-2n-combinatorial} and Remark~\ref{rem:two-formalisms}(ii); note that~$F_a(\cdot)$ depends on the choice of the square root {in the definition of}~$\eta_a$ but is \emph{independent} of all other choices. The following notion first appeared in~\cite{Smirnov-10,chelkak-smirnov} in the critical planar Ising model context and was recently discussed in~\cite{cimasoni-AIHP} for arbitrary surface graphs. Recall that we use the parametrization~$x_e=\tan\frac{1}{2}\theta_e$ of the edge weights.

\begin{definition}
\label{def:s-hol}
A complex-valued function~$F$ defined on edge midpoints~$z_e$ and, simultaneously, on corners~$c$ of a given weighted graph~$(G,x)$ embedded in the complex plane satisfies a \emph{generalized s-holomorphicity condition} for a pair~$z_e$ and~$c=c^\pm(e)$ if
\begin{equation}
\label{eqn:s-hol-def}
F(c) ~ = ~ e^{\frac{i}{2}(\rw(c,e)\mp(\pi-\theta_e))}\cdot \Proj{F(z_e)}{{e^{\pm \frac{i}{2}(\pi-\theta_e)}\eta_e}}\,,
\end{equation}
where, as usual,~$\rw(c,e)$ denotes the rotation angle between the decoration~$c$ oriented \emph{towards} the vertex~$v(c)=o(e)$ and the oriented edge~$e$, and~$\Proj{F}{\nu}:=\Re[F\bar{\nu}]\nu=\frac{1}{2}[F+\nu^2\overline{F}\,]$,
and thus the choice of the sign of~$\eta_e$ in~\eqref{eqn:s-hol-def} is irrelevant.
\end{definition}

It is well-known that
the complex-valued observables~$F_a(\cdot)=\lan\psi(\cdot)\phi_a\ran$ introduced above, as well as the ``source-at-corner'' observables~$F_c(\cdot)=\lan\psi(\cdot)\chi_c\ran$ and their spinor counterparts, satisfy the generalized s-holomorphicity condition for all pairs~$(z_e,c^\pm(e))$ except near the ``source'' edge~$a$ or the corner~$c$, respectively.
(Actually, if one uses direct combinatorial definitions, with a proper treatment of the values~$F_a(z_a)$ or~$F_c(c)$, instead of the formal correlations of Grassmann variables, these local relations are satisfied even near the ``source'' edge~$a$ or the corner~$c$, cf.~Remark~\ref{rem:chi-adjusted}.) A simple \emph{combinatorial} proof of~\eqref{eqn:s-hol-def} in the special situation when~$\rw(c^\pm(e),e)=\pm(\pi-\theta_e)$ can be found in many places (e.g. see~\cite[Section~4]{smirnov-icm-2010} for the square lattice case or a non-optimal version of the same argument~\cite[Section~2.2]{chelkak-smirnov} for isoradial graphs), and these proofs can be trivially adapted to the general situation.

Let us now sketch the proof of another well-known fact saying that the s-holomorphicity condition is essentially equivalent to the propagation equation~\eqref{eqn:three-term-mu-sigma}, see~\cite[Lemma~3.4]{chelkak-smirnov}, or the \emph{algebraic} identity~\eqref{eqn:C^-1=D^-1}, cf.~\cite[Section~2.1]{Lis-2013} and~\cite[Theorem~4.2]{cimasoni-AIHP}. Indeed, using~\eqref{eqn:phi-is-two-chi} one can rewrite definition~\eqref{eqn:psi(z_e)-def} in the following form, independent of the choices of~$\eta_e$ and~$\eta_c$:
\[
\psi(z_e)={\textstyle\frac{1}{2}t_ex_e^{-1/2}}\cdot \left[\sum\nolimits_{c=c^\pm(e)}e^{-\frac{i}{2}\rw(c,e)}\psi(c)+
\sum\nolimits_{c=c^\pm(\overline{e})}e^{-\frac{i}{2}\rw(c,\overline{e})}\psi(c)\right],
\]
where each of the sums contains two terms. Further, each of the four variables~$\psi(c)$ involved into these sums produces a term~$\lan\psi(c)\phi_a\ran$
with a prescribed complex phase~${\eta_c}$. Thus one can use the identity~$\Proj{\alpha r}{\nu}=\frac{1}{2}[1+\nu^2\overline{\alpha}^2]\cdot\alpha r$ for~$r\in\R$ and straightforward computations to get the following form of the right-hand side of~\eqref{eqn:s-hol-def} for~$F(z_e)=\lan \psi(z_e)\phi_a\ran$:
\[
{\textstyle\frac{1}{2}}e^{\frac{i}{2}\rw(c,e)}\cdot \left[ \sum\nolimits_{c=c^\pm(e)}e^{-\frac{i}{2}\rw(c,e)}\lan\psi(c)\phi_a\ran
~\mp~ ix_e^{-1}\sum\nolimits_{c=c^\pm(\overline{e})}e^{-\frac{i}{2}\rw(c,\overline{e})}\lan \psi(c)\phi_a\ran\right].
\]
Therefore, the s-holomorphicity condition~\eqref{eqn:s-hol-def} for the function~$F_a(\cdot)=\lan \psi(\cdot)\phi_a\ran$ and a corner~$c=c^-(e)$ can be \emph{equivalently} rewritten as
\begin{equation}
\label{eqn:x-s-hol-equiv}
\sum\nolimits_{c'\in\{c^\pm(e),c^\pm(\overline{e})\}} \rS_{c,c'}\lan\psi(c')\phi_a\ran ~=~ 0\,,
\end{equation}
with the coefficients~$\rS_{c,c'}$ given by~\eqref{eqn:Q-entries}. By definition,~$\phi_a$ is a (real) linear combination of the two corner variables~$\chi_{c^\pm(a)}$, so it is easy to see that
\[
\lan\psi(c')\phi_a\ran = {\kappa}_- {\rC}^{-1}_{c',c^-(a)} + {\kappa}_+ {\rC}^{-1}_{c',c^+(a)}\quad \text{for~any}~c'\in V(G^\rC)
\]
with some (complex) coefficients~$\kappa_\pm$. Since~$\rS\rC^{-1}=\frac{1}{2}(\rY+i\rI)$, equality~\eqref{eqn:x-s-hol-equiv} easily follows  provided~$v(c)\ne o(a)$. The analog of~\eqref{eqn:x-s-hol-equiv} for~$c=c^+(e)$ can be checked in the same way, this time with coefficients given by the entries of the matrix~$\frac{1}{2}(\rY-i\rI)\rC=\rS-i\rC$.

\begin{remark}
\label{rem:boundary-conditions} Let us assume that~$v_1\in V(G)$ is a degree~$1$ vertex and $e_1\in\EE(G)$ is the unique oriented edge of~$G$ satisfying~$t(e_1)=v_1$, cf.~Remark~\ref{rem:degree-one-vertices}. Despite the fact that adding/removing such vertices to the graph~$G$ does not affect the Ising model on the dual graph~$G^*$, allowing them is sometimes useful, notably when the outer face~$u_{\opname{out}}$ has a huge degree. In this case, it is convenient to add such a vertex~$v_1$ to each of the vertices~$v\in V(G)$ incident to~$u_{\opname{out}}$ in order to speak about \emph{boundary conditions} satisfied by complex-valued observables~$F_a(\cdot)$. As the only vertex incident to~$\overline{e}_1$ in the terminal graph~$G^\rK$ is~$e_1$, one has
\begin{equation}
\label{eqn:boundary-conditions}
F_a(z_{e_1})~ = ~ \lan \psi(z_{e_1})\phi_a\ran ~ = ~ t_{e_1}{\eta_{\overline{e}_1}}\cdot\lan \phi_{\overline{e}_1}\phi_a\ran ~ \in ~ { \eta_{\overline{e}_1}}\R ~ = ~ {i\eta_{e_1}}\R
\end{equation}
for all~$a\ne \overline{e}_1$, since~$\lan\phi_{e_1}\phi_a\ran  = \wh{\rK}^{-1}_{e_1,a}=0$ unless~$a=\overline{e}_1$. Clearly, this property holds for all versions of complex-valued fermionic observables discussed above, including the spinor ones. Again, these boundary conditions become even more transparent if one just starts with the combinatorial descriptions of these observables, e.g. with formula~\eqref{eqn:stas-observable-def} for~$F_a(z_e)$.
\end{remark}

We conclude this section by a brief discussion of the general strategy used in the papers~\cite{chelkak-smirnov,hongler-smirnov,Hongler-thesis,Chelkak-Izyurov,CHI} mentioned above to prove the convergence, as~$\delta\to 0$, of various correlation functions in the critical Ising model considered on refining discrete approximations~$\Omega_\delta$ to a given planar domain~$\Omega$. As an example for this discussion, we use the energy density expectations~\eqref{eqn:energy-Pfaff} treated in~\cite{hongler-smirnov,Hongler-thesis}. For these expectations, we need some simple preliminaries reflecting their algebraic structure. Similarly to~\eqref{eqn:psi(z_e)-def}, let us define
\[
\psi^{\star}(z_e):= {t_e\cdot (\overline{\eta}_e\phi_e + \overline{\eta}_{\bar{e}}\phi_{\bar{e}})}
\]
and let
\[
\begin{array}{rcl}
\Psi(z_e,z_a)&:=\ \lan\psi(z_e)\psi(z_a)\ran & = \ {t_a\cdot (\eta_a F_a(z_e) + \eta_{\bar{a}} F_{\bar{a}}(z_e))}\,,\\
\Psi^{\star}(z_e,z_a)&:=\ \lan\psi(z_e)\psi^{\star}(z_a)\ran  & = \ {t_a\cdot (\overline{\eta}_a F_a(z_e) + \overline{\eta}_{\bar{a}} F_{\bar{a}}(z_e))\,.}
\end{array}
\]
Note that~$\Psi(z_a,z_e)=-\Psi(z_e,z_a)$,~$\Psi^{\star}(z_a,z_e)=-\overline{\Psi^{\star}(z_e,z_a)}$, and all these functions are independent of the choices of~$\eta_a$ and~$\eta_e$. Moreover, it is easy to see that
\[
\matr{\Psi(z_e,z_a)}{\Psi^{\star}(z_e,z_a)}{\overline{\Psi^{\star}(z_e,z_a)}}{\overline{\Psi(z_e,z_a)}}= t_et_a\cdot
{\matr{\eta_e}{\eta_{\bar{e}}}{\overline{\eta}_e}{\overline{\eta}_{\bar{e}}}} \matr{\wh{\rK}^{-1}_{e,a}}{\wh{\rK}^{-1}_{e,\bar{a}}}{\wh{\rK}^{-1}_{\bar{e},a}}{\wh{\rK}^{-1}_{\bar{e},\bar{a}}}
{\matr{\eta_a}{\overline{\eta}_a}{\eta_{\bar{a}}}{\overline{\eta}_{\bar{a}}}},
\]
where $\wh{\rK}^{-1}_{e,a}\,=\,\lan\phi_e\phi_a\ran$ due to the definition of these formal correlation functions.

Therefore, in order to understand the scaling limit of the multi-point energy-density expectation~\eqref{eqn:energy-Pfaff}, it is enough to understand the scaling limit of the functions~$\Psi(z_e,z_a)$ and~$\Psi^{\star}(z_e,z_a)$ or, equivalently, the scaling limit of the complex-valued observables~$F_a(z_e)$ and~$F_{\overline{a}}(z_e)$. The latter satisfy the s-holomorphicity condition~\eqref{eqn:s-hol-def} everywhere in~$\Omega_\delta$ except near the ``source'' edge~$a$ or~$\overline{a}$ and the condition~\eqref{eqn:boundary-conditions} at the boundary. Therefore, the question amounts to the proof of convergence of solutions to such discrete boundary value problems as~$\delta\to 0$, including the careful analysis of their behavior near the ``source'' point and, in the more general setup, near the branching points~$u_1,\dots,u_m$, cf.~Remark~\ref{rem:formulas-via-K^-1}(ii).

\begin{remark}
\label{rem:towards-scaling-limits-single}
Let us emphasize that the combinatorial formulas discussed in this paper provide just a \emph{starting point} for the analysis of scaling limits of various correlation functions in discrete domains~$\Omega_\delta$\,. The boundary value problems for s-holomorphic functions satisfying boundary conditions~\eqref{eqn:boundary-conditions} are not easy to handle and one needs \emph{a lot of technical work} in order to prove the relevant convergence theorems for their solutions, even when considering the \emph{critical} Ising model on subgraphs of the square grid. The first breakthrough convergence results of this type were obtained by Smirnov in~\cite{smirnov-icm-2006,Smirnov-10} and more advanced methods were later developed in~\cite{chelkak-smirnov,hongler-smirnov,Chelkak-Izyurov,CHI,izyurov-free}. Away from criticality, a similar analysis does not look completely out of reach and some important algebraic tricks (notably, the definition of the discrete antiderivative~$\int\Im[(F(z))^2dz]$) are available in a fairly general setup, see~\cite{cimasoni-AIHP}. Nevertheless, even the near-critical (aka massive) model considered \emph{in bounded domains} has not been treated yet.
\end{remark}


\section{The surface case}
\label{sec:surface}

The aim of this section is to extend the results and methods of proof of Section~\ref{sec:planar} to arbitrary finite weighted graphs.
This requires the use of a geometrical tool known as a {\em spin structure\/}.
Therefore, we devote a first subsection to reviewing the main properties of spin structures on surfaces. We then use them to extend the Kac--Ward formula to graphs embedded in surfaces.
In a last subsection, we show how to use this result for the computation of spin correlations in this more general setting.

\setcounter{equation}{0}
\subsection{Spin structures, Kac--Ward matrices on surfaces, and quadratic forms}
\label{sub:homology}

Obviously, any finite graph can be embedded in a compact orientable surface. However, in order to define the associated Kac--Ward matrix, one needs to be able to
measure rotation angles along curves. For planar closed curves, there is one natural way to do so: one measures the rotation angle of the velocity vector of
the curve with respect to any constant vector field on the plane. For curves embedded in an arbitrary surface, there is no preferred way. However, there is a standard
geometrical tool for this, known as a {\em spin structure\/}.
We shall not recall its formal definition (see e.g.~\cite[p.55]{Atiyah}), but only
state without proof the properties that we shall need.

The first of these properties is that any spin structure on a compact orientable surface~$\Sigma$ can be given by a vector field on~$\SI$ with isolated zeroes of even index.
This already allows us to extend the definition of Kac--Ward matrices to this setting, as follows. Given a weighted graph~$(G,x)\subset\SI$ and a spin structure~$\lambda$
on~$\Sigma$, let us endow~$\SI$ with a Riemannian metric and fix a vector field~$X$ on~$\SI$ with isolated zeroes of even index in~$\SI\setminus G$ representing the
spin structure~$\lambda$. Finally, let us mark one point inside each edge of~$G$.

\begin{definition}
\label{def:KW2}
The Kac--Ward matrix associated to the weighted graph~$(G,x)$ embedded in~$\SI$ and to the spin structure~$\lambda$ is the~$|{\EE}(G)|\times|{\EE}(G)|$
matrix~$\KW_\lambda(G,x)=\rI-\rT_\lambda$, where~$\rI$ is the identity matrix and~$\rT_\lambda$ is defined by
\[
(\rT_\lambda)_{e,e'}=\begin{cases}
\exp\left(\frac{i}{2}\rw_\lambda(e,e')\right)(x_ex_{e'})^{1/2}& \text{if~$t(e)=o(e')$ but~$e'\neq \bar{e}$;} \\
0 & \text{otherwise,}
\end{cases}
\]
where~$\rw_\lambda(e,e')$ is the rotation angle of the velocity vector field along~$e$ followed by~$e'$ with respect to the vector field~$X$, from the marked
point in~$e$ to the marked point in~$e'$.
\end{definition}

Obviously, this matrix depends on the choice of the vector field representing~$\lambda$ and of the marked points in the edges. However, its determinant will turn out only
to depend on~$\lambda$ and on~$(G,x)\subset\SI$. The precise result is most conveniently stated using the terminology of homology and quadratic forms, that we now very
briefly recall. (We refer the interested reader to~\cite{Hat} for further details.)

Consider a graph~$G$ embedded in a compact connected orientable surface~$\SI$ of genus~$g$ in such a way that~$\SI\setminus G$ consists of a disjoint union of topological disks.
Let~$C_0$ (resp.~$C_1$,~$C_2$) denote the~$\Z_2$-vector space with basis the set of vertices (resp. edges, faces) of~$G\subset\SI$. Elements of~$C_k$ are called {\em~$k$-chains\/}.
Also, let~$\partial_2\colon C_2\to C_1$ and~$\partial_1\colon C_1\to C_0$ denote the {\em boundary operators\/} defined in the obvious way. Since~$\partial_1\circ\partial_2$ vanishes,
the space of {\em~$1$-cycles\/}~$\mathrm{ker}(\partial_1)$ contains the space~$\partial_2(C_2)$ of
{\em~$1$-boundaries\/}. The {\em first homology space\/}~$H_1(\SI;\Z_2):=\mathrm{ker}(\partial_1)/\partial_2(C_2)$ turns out not to depend on~$G$, but only on~$\SI$:
it has dimension~$2g$ if~$\SI$ is closed (i.e. compact without boundary), and dimension~$2g+b-1$ if~$\SI$ has~$b\ge 1$ boundary components.
Note that the intersection of curves defines a bilinear form on~$H_1(\SI;\Z_2)$ which is non-degenerate if~$\SI$ is closed; it will be denoted by~$(\alpha,\beta)\mapsto \alpha\cdot\beta$ as usual.
Finally, recall that the space~$H^1(\SI;\Z_2)=\Hom(H_1(\SI;\Z_2),\Z_2)$ can be understood as the set of (gauge equivalence classes of)~$\Z_2$-valued {\em flat connections\/};
these are maps~$\varphi\colon\EE(G)\to\Z_2$ such that~$\varphi(e)=\varphi(\overline{e})$ for each~oriented edge~$e$ and~$\sum_{e\in\partial f}\varphi(e)=0$ for each face~$f$
of~$G\subset\SI$.

This leads us to the statement of the second property of spin structures: the set~$\S(\SI)$ of spin structures on an oriented compact surface~$\SI$ is an affine space over~$H^1(\SI;\Z_2)$.
In other words, there is an action~$(\varphi,\lambda)\mapsto\varphi+\lambda$ of~$H^1(\SI;\Z_2)$ on~$\S(\SI)$ such that for any fixed~$\lambda$, the assignment~$\varphi\mapsto\varphi+\lambda$ defines a bijection from~$H^1(\SI;\Z_2)$ onto~$\S(\SI)$. This action is easy to understand at the level of vector fields, and therefore at the
level of Kac--Ward matrices: it is simply given by~$(\rT_{\varphi+\lambda})_{e,e'}=(-1)^{\varphi(e)}(\rT_\lambda)_{e,e'}$.

\begin{example}
\label{ex:domain}
As a first example, consider the case where~$\SI$ is an~$m$-punctured disk in the plane. A natural spin structure is given by any constant vector field,
and the corresponding Kac--Ward matrix is nothing but the classical one (see Section~\ref{sub:intro-KW-and-terminal}). On the other hand, this surface has genus zero and~$m+1$ boundary components, so it admits~$2^{m}$ different spin structures. They can be obtained from the first one as follows: draw a path
(transverse to the graph) from each of the punctures to the boundary of the disk, fix a subset of the punctures, and set~$\varphi(e)=1$ whenever~$e$
crosses the path corresponding to one of the chosen punctures (and set $\varphi(e)=0$ else). There are~$2^m$ choices of subsets of the punctures,
which correspond to the~$2^{m}$ different spin structures, and to the~$2^m$ different Kac--Ward matrices~$\KW_\lambda(G,x)$.
Note that, if the punctures are located at faces~$u_1,\dots,u_m$ of a planar graph~$G$, then we have
\begin{equation}
\label{eqn:KWl=KWu}
\KW_\lambda(G,x)~=~\rI_{[u_1,..,u_m]}\cdot \KW_{[u_1,\dots,u_m]}\,,
\end{equation}
where~$\KW_{[u_1,\dots,u_m]}$ are the matrices that appeared in Proposition~\ref{prop:spin1} (see Section~\ref{sub:intro-spin-correlations}).
\end{example}

\begin{example}
\label{ex:torus}
Another easy example is given by the torus~$\SI=\mathbb{T}^2$. Here again, it is possible to consider a constant vector field as a ``reference" spin structure.
Since~$H_1(\mathbb{T}^2;\Z_2)$ has dimension~$2$, there are~$4$ distinct spin structures on~$\mathbb{T}^2$. They can be obtained from the first one as follows: draw two closed
curves (transverse to~$G$) representing a basis of the homology, fix a subset of this basis, and set~$\varphi(e)=1$ whenever~$e$ crosses one of the chosen curves (and set $\varphi(e)=0$ else).
There are~$4$ choices of subsets of this basis, they correspond to the~$4$ different spin structures, and to the~$4$ different Kac--Ward matrices.
\end{example}

Let us now turn to quadratic forms.
Let~$H$ be a finite-dimensional~$\Z_2$-vector space endowed with {an \emph{alternating} (i.e., such that~$\alpha\cdot\alpha=0$ for all~$\alpha\in H$)} bilinear form~$(\alpha,\beta)\mapsto \alpha\cdot\beta$. A {\em quadratic form\/} on~$(H,\,\cdot\,)$ is a
map~$q\colon H\to\Z_2$ such that
\[
q(\alpha+\beta)=q(\alpha)+q(\beta)+\alpha\cdot\beta\ \ \text{for~all}\ \ \alpha,\beta\in H.
\]
Note that there are exactly~$|H|$ quadratic forms on~$(H,\,\cdot\,)$;
more precisely, the set of such forms is an affine space over~$\Hom(H,\Z_2)$.
This easily implies the equality
\begin{equation}
\label{eqn:q}
\frac{1}{|H|}\sum_{q}(-1)^{q(\alpha)}=\begin{cases}
1& \text{if~$\alpha=0$;} \\
0 & \text{otherwise,}
\end{cases}
\end{equation}
where the sum is over all quadratic forms on~$(H,\,\cdot\,)$.
Furthermore, if the {alternating bilinear form~$(\alpha,\beta)\mapsto\alpha\cdot\beta$} is non-degenerate, Arf showed~\cite{Arf} that the corresponding quadratic forms are classified by the invariant {$\Arf(\cdot)\in\Z_2$ defined by}
{
\begin{equation*}
(-1)^{\Arf(q)}=\frac{1}{\sqrt{|H|}}\sum_{\alpha\in H}(-1)^{q(\alpha)}\,,
\end{equation*}
which is now called the \emph{Arf invariant}} {(note that in this case the space~$H$ is necessarily even-dimensional)}.
We shall need a single property of this invariant (see e.g.~\cite[Lemma 2.10]{Loebl-Masbaum} for a proof), namely that it satisfies the equality
\begin{equation}
\label{equ:Arf}
\frac{1}{\sqrt{|H|}}\sum_{q}(-1)^{\Arf(q)+q(\alpha)}=1
\end{equation}
for any~$\alpha\in H$, where the sum is over all quadratic forms on~$(H,\,\cdot\,)$.

The relationship between spin structures and quadratic forms {on~$H_1(\SI;\Z_2)$} is given by the following classical result of Johnson~\cite{Joh}.
Consider a spin structure~$\lambda$ on~$\SI$ represented by a vector field~$X$ on~$\SI$ with zeroes of even index. Given a piecewise smooth closed curve~$C$ in~$\SI$
avoiding the zeroes of~$X$, let~$\wind_\lambda(C)\in 2\pi\Z$ denote the rotation angle of the velocity vector of~$C$ with respect to~$X$.
Then, given a homology class~$\alpha\in H_1(\SI;\Z_2)$ represented by {a disjoint union} of oriented simple closed curves~$C_j$, the
equality~$(-1)^{q_\lambda(\alpha)}=\prod_j(-\exp\left(\textstyle{\frac{i}{2}}\wind_\lambda(C_j)\right)$
gives a well-defined quadratic form on~$(H_1(\SI;\Z_2),\,\cdot\,)${, where $\cdot$ denotes the intersection form}. This implies in particular that, for any oriented closed curve~$C$ with~$\rt(C)$ transverse
self-intersection points,
\begin{equation}
\label{equ:rot}
-\exp\left(\textstyle{\frac{i}{2}}\wind_\lambda(C)\right)=(-1)^{q_\lambda(C)+\rt(C)}\,.
\end{equation}
Indeed, this can be checked by smoothing out the intersection points of~$C$ as in the proof of Lemma~\ref{lemma:Whitney} -- which is nothing but the~$g=0$ case of this equation.
Johnson's theorem asserts that the mapping~$\lambda\mapsto q_\lambda$ defines an~$H^1(\SI;\Z_2)$-equivariant bijection between the set~$\S(\SI)$ of spin structures on~$\SI$ and the
set of quadratic forms on~$(H_1(\SI;\Z_2),\cdot)$. Equation~\eqref{eqn:q} translates into the equality
\begin{equation}
\label{equ:q}
\frac{1}{|\S(\SI)|}\sum_{\lambda\in\S(\SI)}(-1)^{q_\lambda(\alpha)}=\begin{cases}
1& \text{if~$\alpha=0$;} \\
0 & \text{otherwise.}
\end{cases}
\end{equation}
Johnson's theorem also allows us to define the {\em Arf invariant\/} of a spin structure~$\lambda$ as the Arf invariant of the associated quadratic form~$q_\lambda$. In the case of a closed surface of genus~$g$, equation~\eqref{equ:Arf} then implies the equality
\begin{equation}
\label{eqn:Arf2}
\frac{1}{2^g}\sum_{\lambda\in\S(\SI)}(-1)^{\Arf(\lambda)+q_\lambda(\alpha)}=1
\end{equation}
for any~$\alpha\in H_1(\SI;\Z_2)$.

\subsection{The Kac--Ward formula on surfaces}
\label{sub:genKW}

We are finally ready to state and prove the main result of this section.
Note that the case of genus zero gives back the Kac--Ward formula (in an extended form actually, since we now allow edges that are not line segments).

\begin{theorem}
\label{thm:KW2}
Let~$(G,x)$ be a finite weighted graph embedded in an orientable compact surface~$\SI$. For any spin structure~$\lambda\in \S(\SI)$, we have the equality
\[
\det(\KW_\lambda(G,x))=\Big(\sum_{P\in\cE(G)}(-1)^{q_\lambda([P])} x(P)\Big)^2,
\]
where~$[P]\in H_1(\SI;\Z_2)$ denotes the homology class of~$P$.
\end{theorem}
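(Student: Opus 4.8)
The plan is to follow exactly the strategy used for Theorem~\ref{thm:KW1}, i.e.\ to reduce the Kac--Ward determinant to a signed dimer count on the terminal graph, but now with all signs controlled by the quadratic form~$q_\lambda$ instead of the planar winding. First I would form the matrix $\rK_\lambda:=\rJ\cdot\KW_\lambda(G,x)$ exactly as in~\eqref{eqn:K-def}; since $\det\rJ=(-1)^{|E(G)|}$ and $|E(G)|=|V(G^\rK)|/2$, multiplying by $\rJ$ only changes $\det\KW_\lambda$ by an overall sign, which will be absorbed into the square anyway. As before $\rK_\lambda$ is the (signed) adjacency matrix of the weighted terminal graph $(G^\rK,x^\rK)$: on a long edge $\{e,\overline e\}$ one reads off $1$, and on a short edge joining $e,e'$ one reads off $-\exp[\tfrac i2\rw_\lambda(\overline e,e')]\cdot(x_ex_{e'})^{1/2}$. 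One then chooses square roots $\eta_e$ of edge directions (using the chosen vector field $X$ to make sense of ``direction'' locally) and passes to a real antisymmetric matrix $\wh\rK_\lambda$ as in~\eqref{eqn:hat-K-def}, so that $\det\KW_\lambda(G,x)=(\Pf{\wh\rK_\lambda})^2$. Expanding the Pfaffian, $\Pf{\wh\rK_\lambda}=\sum_{D\in\cD(G^\rK)}\veps_\lambda(D)\,x^\rK(D)$, and Lemma~\ref{lemma:terminal} gives $\sum_{D}(-1)^{\rt(D)}x^\rK(D)=\sum_{P\in\cE(G)}x(P)$; note Lemma~\ref{lemma:terminal} and the collapse $\sum_{D\in\cD(K_{2n})}(-1)^{\rt(D)}=1$ are purely combinatorial and do not see the surface, so they carry over verbatim.

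Thus everything reduces to the analogue of Claim~A: for each $D\in\cD(G^\rK)$,
\[
\veps_\lambda(D)~=~(-1)^{\rt(D)}(-1)^{q_\lambda([\rho(D)])}\,\veps_\lambda(D_0)\,,
\]
where $D_0$ is the all-long reference matching and $\rho(D)=G\setminus D_G\in\cE(G)$. To prove this I would take $D\btu D_0$, which splits into $\ell$ vertex-disjoint alternating (long/short) even cycles $C_1,\dots,C_\ell$ in $G^\rK$; choosing representatives so that $\sigma\circ\sigma_0^{-1}$ rotates each $C_j$ by one step, the signs combine as in the planar proof to give $\veps_\lambda(D)\veps_\lambda(D_0)=\prod_j(-\omega_\lambda(C_j))$, where $\omega_\lambda(C_j)$ is the product of the short-edge factors $\exp[\tfrac i2\rw_\lambda(\overline e,e')]$ along $C_j$. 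The only new input is that this product equals $\exp[\tfrac i2\wind_\lambda(\widetilde C_j)]$ for a smoothed version $\widetilde C_j$ of the cycle, and then Johnson's formula~\eqref{equ:rot} replaces Whitney's Lemma~\ref{lemma:Whitney}: $-\exp[\tfrac i2\wind_\lambda(\widetilde C_j)]=(-1)^{q_\lambda([\widetilde C_j])+\rt(\widetilde C_j)}$. Multiplying over $j$, using that $\rt(D)$ equals $\sum_j\rt(\widetilde C_j)$ mod $2$ (all crossings happen among the short edges, which all lie in $D$, and distinct $C_j$ meet an even number of times) and that $q_\lambda$ is a quadratic form so $q_\lambda(\sum_j[\widetilde C_j])=\sum_j q_\lambda([\widetilde C_j])+\sum_{j<k}[\widetilde C_j]\cdot[\widetilde C_k]$ with the cross-terms even, one gets $\prod_j(-\omega_\lambda(C_j))=(-1)^{\rt(D)}(-1)^{q_\lambda([\rho(D)])}$, since $\sum_j[\widetilde C_j]=[\rho(D)]$ in $H_1(\SI;\Z_2)$ (the short edges contract, the long edges present in $D\btu D_0$ are exactly those of $\rho(D)$). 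Feeding this back,
\[
\sum_{P\in\cE(G)}(-1)^{q_\lambda([P])}x(P)~=~\veps_\lambda(D_0)\,\Pf{\wh\rK_\lambda}\,,
\]
and squaring yields the theorem.

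I expect the main obstacle to be \emph{bookkeeping the winding numbers on the surface}: on the plane the total rotation of a simple closed curve is $\pm2\pi$ with no ambiguity, whereas with respect to a vector field $X$ with zeroes one must (i) make sure the marked points / local square roots $\eta_e$ are used consistently so that the short-edge factors $\exp[\tfrac i2\rw_\lambda(\overline e,e')]$ really multiply to $\exp[\tfrac i2\wind_\lambda]$ of a genuine smoothed loop avoiding the zeroes of $X$, and (ii) verify that the homology class of that smoothed loop is independent of the smoothing and equals $[\rho(D)]$, so that $q_\lambda$ is being evaluated on a well-defined class. Both points are of the same flavour as the identity $\veps(D)$-is-well-defined check in the planar case and the well-definedness of $\tau(P)$ in Theorem~\ref{thm:multipoint}, and the substitution of~\eqref{equ:rot} for Lemma~\ref{lemma:Whitney} together with the quadratic-form identity is precisely the device that makes it all close up; I would also remark that $\det\KW_\lambda(G,x)$ is thereby seen to be independent of the auxiliary choices (metric, representative vector field, marked points), since the right-hand side manifestly is.
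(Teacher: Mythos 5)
Your proposal follows the paper's proof of this theorem essentially step for step: pass to $\rK_\lambda=\rJ\cdot\KW_\lambda$ and the real antisymmetric matrix $\wh\rK_\lambda=i\rU\rK_\lambda\rU^*$, expand the Pfaffian over dimer configurations of the terminal graph, reuse Lemma~\ref{lemma:terminal} verbatim, and prove the surface analogue of Claim~A by substituting Johnson's identity~\eqref{equ:rot} for Whitney's Lemma~\ref{lemma:Whitney} and invoking the quadratic-form property of $q_\lambda$. The target identity $\veps_\lambda(D)\veps_\lambda(D_0)=(-1)^{q_\lambda([\rho(D)])+\rt(D)}$ is exactly what the paper establishes, and your identification $[D\btu D_0]=[\rho(D)]$ is correct.

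However, two of your parenthetical justifications are false once the genus is positive, and your argument only closes because the two errors cancel. You assert (i) that distinct cycles $C_j,C_k$ of $D\btu D_0$ meet an even number of times, so that $\rt(D)=\sum_j\rt(C_j)\bmod 2$, and (ii) that the cross-terms $[\widetilde C_j]\cdot[\widetilde C_k]$ in the quadratic-form expansion are even. On a surface the $\Z_2$-intersection form is nontrivial, and the geometric crossing count of two transverse closed curves taken mod~$2$ \emph{equals} their homological intersection number; so whenever $[C_j]\cdot[C_k]=1$ (perfectly possible already on the torus) both (i) and (ii) fail. The correct bookkeeping, as in the paper, is
\[
\sum_{j}\bigl(q_\lambda(C_j)+\rt(C_j)\bigr)=q_\lambda([D\btu D_0])+\sum_{j<k}C_j\cdot C_k+\sum_j\rt(C_j)=q_\lambda([D\btu D_0])+\rt(D\btu D_0)\,,
\]
i.e.\ the homological cross-terms are not discarded but absorbed into the total crossing number $\rt(D\btu D_0)=\rt(D)$. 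Since your two incorrect claims introduce the same parity defect $\sum_{j<k}C_j\cdot C_k$ in numerator and denominator, so to speak, your displayed conclusion is nonetheless correct; but as written the proof rests on two statements that are wrong precisely in the regime (genus $\ge 1$) the theorem is meant to cover, so this step must be rewritten along the lines above.
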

\begin{proof}
First note that the set~$\mathcal{E}(G)$ endowed with the symmetric difference can be identified with the~$\Z_2$-vector space of~$1$-cycles in~$G$.
Therefore, for any fixed spin structure~$\lambda$ on~$\SI$, {the associated twisted partition function can be written as}
\[
\cZ_\lambda(G,x):=\sum_{P\in\mathcal{E}(G)}(-1)^{q_\lambda([P])} x(P)=\sum_{\alpha\in H_1(\SI;\Z_2)}(-1)^{q_\lambda(\alpha)}\sum_{P:[P]=\alpha}x(P)\,,
\]
where~$[P]\in H_1(\SI;\Z_2)$ denotes the homology class of the~$1$-cycle~$P$.
The (weighted) terminal graph~$(G^\rK,x^\rK)$ can be defined exactly as in the planar case, and the same arguments lead to the equality
\[
\cZ_\lambda(G,x)=\sum_{D\in\D(G^\rK)}(-1)^{q_\lambda([G\setminus D_G])+\rt(D)}\,x^\rK(D)\,.
\]
As in the planar case, let us consider the Hermitian matrix~$\rK_\lambda=\rJ\cdot \KW_\lambda(G,x)$. For each oriented edge~$e$ of~$G$, {fix a square root of the
direction of the velocity vector of~$e$ at the marked point inside~$e$, measured with respect to the vector field representing the spin structure~$\lambda$, and denote by~$\eta_e$ its complex conjugate multiplied by a global unimodular factor $\zeta$.} This allows
us to define the real skew-symmetric matrix~$\wh{\rK}_\lambda={i\rU^*\rK_\lambda\rU}$, where~$\rU$ is the diagonal matrix with coefficients~$\{\eta_e\}_{e\in V(G^\rK)}$. Comparing
the equation displayed above with the Pfaffian
\[
\Pf{\wh{\rK}_\lambda}=\sum_{D\in\D(G^\rK)}\veps_\lambda(D)x^\rK(D)\,,
\]
we are left with the proof of the equality
\[
\veps_\lambda(D)\veps_\lambda(D_0)=(-1)^{q_\lambda([G\setminus D_G])+\rt(D)}
\]
for any~$D\in\D(G^\rK)$, where~$D_0$ is the dimer configuration given by the set of long edges of the terminal graph~$G^\rK$.
To check this fact, one can use the exact same arguments as in the planar case replacing Lemma~\ref{lemma:Whitney} with its extension,
equation~\eqref{equ:rot}. This leads to
\[
\veps_\lambda(D)\veps_\lambda(D_0)=\prod_{j=1}^\ell(-\exp\left(\textstyle{\frac{i}{2}}\wind_\lambda(C_j)\right)=\prod_{j=1}^\ell(-1)^{q_\lambda(C_j)+\rt(C_j)}\,,
\]
where~$\bigsqcup_{j=1}^\ell C_j=D\btu D_0$\,. Since~$q_\lambda$ is a quadratic form, we get
\begin{align*}
\sum_{j=1}^\ell (q_\lambda(C_j)+\rt(C_j))& =q_\lambda([D\Delta D_0])+\sum_{1\le j<k\le \ell} C_j\cdot C_k + \sum_{j=1}^\ell\rt(C_j) \\ & =q_\lambda([D\Delta D_0])+\rt(D\Delta D_0)\,.
\end{align*}
The theorem now follows from the  equality~$[D\Delta D_0]=[G\setminus D_G]$ in~$H_1(\SI;\Z_2)$, together with the fact that~$\rt(D\Delta D_0)$ and~$\rt(D)$ coincide.
\end{proof}

The Kac--Ward formula for graphs embedded in surfaces, first derived in~\cite{Cim2}, is now an easy corollary.

\begin{corollary}
\label{cor:KW2}
Let~$(G,x)$ be a finite weighted graph embedded in an orientable closed surface~$\SI$ of genus~$g$. Then, the partition function~\eqref{eqn:Z-Ising-high} of the Ising model on~$G$ is equal to
\[
\cZ_{\opname{high}}(G,x)=\frac{1}{2^g}\sum_{\lambda\in \S(\SI)}(-1)^{\Arf(\lambda)}(\det\KW_\lambda(G,x))^{1/2},
\]
where~$\Arf(\lambda)\in\Z_2$ is the Arf invariant of the spin structure~$\lambda$, and~$(\det\KW_\lambda(G,x))^{1/2}$ denotes the square root with constant
coefficient equal to~$+1$.
\end{corollary}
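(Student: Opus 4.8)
The plan is to combine Theorem~\ref{thm:KW2} with the Arf-averaging identity~\eqref{eqn:Arf2}. First I would pin down the sign of the square root appearing in the statement. For each spin structure $\lambda\in\S(\SI)$, consider the polynomial
\[
\cZ_\lambda(G,x)=\sum_{P\in\cE(G)}(-1)^{q_\lambda([P])}x(P)\,.
\]
Its constant term is $+1$: the only $P\in\cE(G)$ with $x(P)=1$ is the empty subgraph, whose class in $H_1(\SI;\Z_2)$ is $0$, and $q_\lambda(0)=0$. On the other hand, $\det\KW_\lambda(G,x)$ is a polynomial in the variables $x_e$ (the half-integer powers $(x_ex_{e'})^{1/2}$ in the entries of $\rT_\lambda$ recombine into integer powers along each closed path in the permutation expansion of the determinant), and $\KW_\lambda(G,0)=\rI$ forces its constant term to be $+1$ as well. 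Hence $\det\KW_\lambda(G,x)$ admits a unique square root that is a polynomial (equivalently, a formal power series) with constant coefficient $+1$, and by Theorem~\ref{thm:KW2} this square root is precisely $\cZ_\lambda(G,x)$. This identifies $(\det\KW_\lambda(G,x))^{1/2}$ in the statement with $\cZ_\lambda(G,x)$.

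Next I would substitute this into the right-hand side of the claimed formula and interchange the two finite summations, over $\lambda\in\S(\SI)$ and over $P\in\cE(G)$:
\[
\frac{1}{2^g}\sum_{\lambda\in\S(\SI)}(-1)^{\Arf(\lambda)}(\det\KW_\lambda(G,x))^{1/2}
=\sum_{P\in\cE(G)}x(P)\cdot\frac{1}{2^g}\sum_{\lambda\in\S(\SI)}(-1)^{\Arf(\lambda)+q_\lambda([P])}\,.
\]
Here I use that $\SI$ is closed of genus $g$, so that the intersection form on $H_1(\SI;\Z_2)$ is non-degenerate, $\S(\SI)$ is an affine space over $H^1(\SI;\Z_2)$ of size $|H_1(\SI;\Z_2)|=2^{2g}$, and $\sqrt{|H_1(\SI;\Z_2)|}=2^g$; this is exactly why the normalizing factor is $2^{-g}$. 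Applying equation~\eqref{eqn:Arf2} with $\alpha=[P]\in H_1(\SI;\Z_2)$, the inner sum equals $1$ for every $P\in\cE(G)$, so the right-hand side collapses to $\sum_{P\in\cE(G)}x(P)=\cZ_{\opname{high}}(G,x)$, which is the assertion.

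The conceptual weight of the corollary is entirely carried by Theorem~\ref{thm:KW2} and the identity~\eqref{eqn:Arf2}, both already available; what remains is bookkeeping. The only point requiring genuine care — and hence the one I would expect a reader to stumble on — is the normalization of the square root: one must check that $(\det\KW_\lambda(G,x))^{1/2}$ with constant coefficient $+1$ is $\cZ_\lambda(G,x)$ itself and not $-\cZ_\lambda(G,x)$, which in turn hinges on $\det\KW_\lambda$ being an honest polynomial in the $x_e$ (so that ``the square root with constant coefficient $+1$'' is unambiguous). Both facts are established exactly as in the planar case (cf.\ Theorem~\ref{thm:KW1} and its proof) and need no new idea; once they are in place the remaining manipulation is a single line.
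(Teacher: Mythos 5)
Your proposal is correct and follows essentially the same route as the paper: both rest on Theorem~\ref{thm:KW2} together with the Arf-averaging identity~\eqref{eqn:Arf2} and a single interchange of the finite sums over $\lambda$ and over homology classes (you run the computation from the right-hand side, the paper from the left, which is immaterial). Your explicit verification that $(\det\KW_\lambda(G,x))^{1/2}$ with constant coefficient $+1$ equals $\cZ_\lambda(G,x)$ — via the integrality of the powers of $x_e$ in $\det\KW_\lambda$ and the constant term of $\cZ_\lambda$ — is a point the paper dispatches with the phrase ``by the choice of the square root,'' and is correctly handled.
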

\begin{proof}
By Theorem~\ref{thm:KW2} and the choice of the square root, we have
\begin{equation}
\label{eqn:sq}
(\det\KW_\lambda(G,x))^{1/2}=\sum_{P\in\cE(G)}(-1)^{q_\lambda([P])} x(P)~=\!\!\!\sum_{\alpha\in H_1(\SI;\Z_2)}(-1)^{q_\lambda(\alpha)}\sum_{P:[P]=\alpha}x(P)\,.
\end{equation}
The claim now follows from equation~\eqref{eqn:Arf2}:
\begin{align*}
\cZ_{\opname{high}}(G,x)~&=\sum_{\alpha\in H_1(\SI;\Z_2)}\sum_{P:[P]=\alpha}x(P) \\
&=\sum_{\alpha\in H_1(\SI;\Z_2)}\biggl[\Big(\,\frac{1}{2^g}\!\!\sum_{\lambda\in\S(\SI)}(-1)^{\Arf(\lambda)+q_\lambda(\alpha)}\Big)\sum_{P:[P]=\alpha}x(P)\biggr]\\
&=~\frac{1}{2^g}\sum_{\lambda\in \S(\SI)}\,\biggl[(-1)^{\Arf(\lambda)}\!\!\!\sum_{\alpha\in H_1(\SI;\Z_2)}(-1)^{q_\lambda(\alpha)}\sum_{P:[P]=\alpha}x(P)\biggr]\\
&=~\frac{1}{2^g}\sum_{\lambda\in \S(\SI)}(-1)^{\Arf(\lambda)}(\det\KW_\lambda(G,x))^{1/2}\,.\qedhere
\end{align*}
\end{proof}

\subsection{Spin correlations on surfaces}
\label{sub:spinsurface}

Similarly to Sections~\ref{sub:intro-spin-correlations} and~\ref{sub:spin}, below we prefer to work with the Ising model defined on the \emph{dual} graph~$G^*$. Recall that the partition function of this model is given by~\eqref{eqn:Z-Ising-low}, with the sum taken over the set $\cE_0(G)=\{P\in\cE(G):[P]=0\}$ of possible \emph{domain walls} configurations and not over the whole set~$\cE(G)$ as in Corollary~\ref{cor:KW2}. Also, recall that in case~$\SI$ has a boundary, we impose~`$+$' boundary conditions on \emph{all} of its components, and that the `free' boundary conditions on (some of) these components can be obtained just by setting the corresponding interaction parameters~$x_e=\exp[-2\beta J_{e^*}]$ to~$1$.

\begin{corollary}
\label{cor:low}
Let~$G$ be a finite graph embedded in an orientable compact surface~$\SI$, possibly with boundary. Then,
the partition function~~\eqref{eqn:Z-Ising-low} of the Ising model on the dual graph~$G^*$ is equal to
\[
\cZ_{\opname{low},\SI}(G,x)=\frac{1}{|\S(\SI)|}\sum_{\lambda\in \S(\SI)}(\det{\mathcal{K}\mathcal{W}}_\lambda(G,x))^{1/2}\,,
\]
where~$\det(\mathcal{K}\mathcal{W}_\lambda(G,x))^{1/2}$ denotes the square root with constant coefficient equal to~$+1$.
\end{corollary}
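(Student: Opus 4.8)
The plan is to deduce the statement from Theorem~\ref{thm:KW2} together with the orthogonality relation~\eqref{equ:q} for quadratic forms, in a way parallel to the proof of Corollary~\ref{cor:KW2} but replacing the Arf-weighted identity~\eqref{eqn:Arf2} by the simpler identity~\eqref{equ:q}. First I would observe that Theorem~\ref{thm:KW2} applies as stated to our $(G,x)\subset\SI$ (the setup there only asks that $\SI\setminus G$ be a disjoint union of disks, which includes the bounded case under the convention of Section~\ref{sub:intro-Z-Ising}), so that for every spin structure $\lambda\in\S(\SI)$,
\[
(\det\KW_\lambda(G,x))^{1/2}~=~\sum_{P\in\cE(G)}(-1)^{q_\lambda([P])}x(P)\,,
\]
and that the right-hand side is indeed the square root with constant coefficient $+1$, since the empty configuration contributes $(-1)^{q_\lambda(0)}x(\emptyset)=1$.

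Next I would average this identity over all of $\S(\SI)$ and interchange the two finite sums, grouping the configurations $P\in\cE(G)$ by their homology class $\alpha=[P]\in H_1(\SI;\Z_2)$:
\[
\frac{1}{|\S(\SI)|}\sum_{\lambda\in\S(\SI)}(\det\KW_\lambda(G,x))^{1/2}~=~\sum_{\alpha\in H_1(\SI;\Z_2)}\Big(\frac{1}{|\S(\SI)|}\sum_{\lambda\in\S(\SI)}(-1)^{q_\lambda(\alpha)}\Big)\sum_{P\,:\,[P]=\alpha}x(P)\,.
\]
By Johnson's theorem the map $\lambda\mapsto q_\lambda$ is a bijection from $\S(\SI)$ onto the set of quadratic forms on $(H_1(\SI;\Z_2),\cdot)$, so the parenthesized factor is exactly the left-hand side of~\eqref{equ:q} at $\alpha$; it equals $1$ if $\alpha=0$ and $0$ otherwise. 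Hence only the null-homologous configurations survive, and the right-hand side collapses to $\sum_{P\in\cE_0(G)}x(P)=\cZ_{\opname{low},\SI}(G,x)$, which is the desired equality.

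I do not anticipate a genuine obstacle here — the argument is essentially a single line of bookkeeping — but two points deserve a word of care, and I would address them explicitly. The first is the sign normalization of the square root, handled by the empty configuration as above. The second is that when $\SI$ has boundary the intersection form on $H_1(\SI;\Z_2)$ is degenerate; this is harmless because~\eqref{eqn:q}, and therefore~\eqref{equ:q}, holds for an arbitrary (possibly degenerate) alternating bilinear form, and the affine structure of $\S(\SI)$ over $H^1(\SI;\Z_2)$ together with Johnson's correspondence is stated in that generality in Section~\ref{sub:homology}. For consistency I would also note that $|\S(\SI)|=|H^1(\SI;\Z_2)|$ equals $2^{2g}$ for a closed surface and $2^{2g+b-1}$ when $\SI$ has $b\ge1$ boundary components, which reproduces the normalization $2^m$ of the $m$-punctured disk in Example~\ref{ex:domain}.
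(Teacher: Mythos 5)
Your proof is correct and follows essentially the same route as the paper's: both rest on Theorem~\ref{thm:KW2} (in the form of~\eqref{eqn:sq}) together with the orthogonality relation~\eqref{equ:q}, differing only in the direction in which the chain of equalities is read. Your added remarks on the sign normalization of the square root and on the degeneracy of the intersection form for surfaces with boundary are sound and consistent with the generality in which Section~\ref{sub:homology} states~\eqref{eqn:q} and Johnson's correspondence.
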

\begin{proof}
Using equations~\eqref{equ:q} and~\eqref{eqn:sq}, one easily gets
\begin{align*}
\cZ_{\opname{low},\SI}(G,x)~
&=\sum_{\alpha\in H_1(\SI;\Z_2)}\biggl[\Big(\frac{1}{|\S(\SI)|}\sum_{\lambda\in\S(\SI)}(-1)^{q_\lambda(\alpha)}\Big)\sum_{P:[P]=\alpha}x(P)\biggr]\\
&=\frac{1}{|\S(\SI)|}\sum_{\lambda\in \S(\SI)}\biggl[\,\sum_{\alpha\in H_1(\SI;\Z_2)}(-1)^{q_\lambda(\alpha)}\sum_{P:[P]=\alpha}x(P)\biggr]\\
&=\frac{1}{|\S(\SI)|}\sum_{\lambda\in \S(\SI)}(\det\mathcal{K}\mathcal{W}_\lambda(G,x))^{1/2}\,.\qedhere
\end{align*}
\end{proof}

\begin{example} Let~$\SI$ be an~$m$-punctured disk in the plane with the punctures located at faces~$u_1,\dots,u_m$ of a planar graph~$G$, as in Example~\ref{ex:domain}. Recall that spin structures~$\lambda\in\S(\SI)$ are in natural $1$--to--$1$ correspondence with subsets~$\mathcal{U}$ of the set $\{u_1,\dots,u_m\}$, and the corresponding Kac--Ward matrices~$\KW_\lambda$ are related to the matrices~$\KW_{[\mathcal{U}]}$ from Proposition~\ref{prop:spin1} by~(\ref{eqn:KWl=KWu}). Therefore, Corollary~\ref{cor:low} gives
\begin{align*}
\frac{\cZ_{\opname{low},\SI}(G,x)}{\cZ_{\opname{low}}(G,x)} ~ & = ~\frac{1}{|\S(\SI)|}\sum_{\lambda\in \S(\SI)}\frac{(\det{\mathcal{K}\mathcal{W}}_\lambda(G,x))^{1/2}}{(\det{\mathcal{K}\mathcal{W}}(G,x))^{1/2}} \\
& = ~ 2^{-m}\sum\nolimits_{\mathcal{U}\subset \{u_1,\dots,u_m\}}\mathbb{E}_{G^*}^+\biggl[\prod\nolimits_{u\in\mathcal{U}}\sigma_u\biggr]~ = ~ \mathbb{E}_{G^*}^+[\,2^{-m}(1+\sigma_{u_1})\dots (1+\sigma_{u_m})\,]\,.
\end{align*}
Note that this is consistent with the definition of~$\cZ_{\opname{low},\SI}(G,x)$, which is the partition function of the Ising model on~$G^*$ with~`$+$' boundary conditions at all the faces $u_1,\dots,u_m$ and~$u_{\opname{out}}$.
\end{example}

Let us now show how Proposition~\ref{prop:spin1} extends to the case of a finite graph~$G$ embedded in an orientable compact surface-with-boundary~$\Sigma$.
Fix~$m$ faces~$u_1,\dots,u_m$ of~$G\subset\Sigma$ and some collection of edge-disjoint paths~$\varkappa=\varkappa_{[u_1,..,u_m]}$ in~$G^*$ linking these
faces to the boundary of~$\Sigma$ or to each other. As in the planar case, this induces a diagonal matrix~$\mathrm{I}_{[u_1,\dots,u_m]}$
and, for any spin structure~$\lambda\in\mathcal{S}(\Sigma)$, a modified Kac--Ward matrix
\[
\mathcal{K}\mathcal{W}_{\lambda+[u_1,\dots,u_m]}=\mathrm{I}_{[u_1,\dots,u_m]}-\mathrm{T}_\lambda\,.
\]
Note that~$\mathcal{K}\mathcal{W}_{\lambda+[u_1,\dots,u_m]}$ can be {interpreted} as the Kac--Ward matrix associated to a spin structure
on the punctured surface~$\Sigma\setminus\{u_1,\dots,u_m\}$. However, this spin structure is not canonically associated to~$\lambda$ and
$u_1,\dots,u_m$, as it depends on the choice of the (homology class of the) paths~$\varkappa$. More precisely, a straightforward extension of the proof of Theorem~\ref{thm:KW2} leads to the equality
\begin{equation}
\label{equ:spin2}
\det\mathcal{K}\mathcal{W}_{\lambda+[u_1,\dots,u_m]}=\Big(\sum_{P\in\mathcal{E}(G)}(-1)^{q_\lambda([P])}(-1)^{\varkappa\cdot P}x(P)\Big)^2\,,
\end{equation}
which implies the following result.

\begin{proposition}
\label{prop:spin2}
Let~$G$ be a finite graph embedded in an orientable compact surface-with-boundary~$\Sigma$.
The correlation of spins at faces~$u_1,\dots,u_m$ with~`$+$' boundary conditions on all the boundary components of~$\Sigma$ is given by
\[
\mathbb{E}^+_{G^*}[\sigma_{u_1}\dots\sigma_{u_m}]=\frac{\sum_{\lambda\in \S(\SI)} (\det\KW_{\lambda+[u_1,\dots,u_m]}(G,x))^{1/2}}{\sum_{\lambda\in \S(\SI)}(\det\KW_\lambda(G,x))^{1/2}},
\]
where~$x_e$ stands for~$\exp(-2\beta J_{e^*})$.
\end{proposition}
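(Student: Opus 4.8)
The plan is to run the proof of Corollary~\ref{cor:low} with the twist by $\varkappa=\varkappa_{[u_1,..,u_m]}$ carried along throughout, and then to recognise the resulting ratio of twisted and untwisted low-temperature partition functions as the spin correlation $\mathbb{E}^+_{G^*}[\sigma_{u_1}\dots\sigma_{u_m}]$, exactly as was done in the planar Proposition~\ref{prop:spin1}.

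First I would handle the numerator. By the extension~\eqref{equ:spin2} of Theorem~\ref{thm:KW2}, together with the normalisation of the square root, one has the identity of polynomials in $x$
\[
(\det\KW_{\lambda+[u_1,..,u_m]}(G,x))^{1/2}~=~\sum_{P\in\cE(G)}(-1)^{q_\lambda([P])}(-1)^{\varkappa\cdot P}x(P)\,,
\]
where the chosen branch is indeed the one with constant term $+1$: the only $P$ contributing to the constant term is $P=\emptyset$, for which $[\emptyset]=0$ and $\varkappa\cdot\emptyset=0$, so its coefficient is $(-1)^{q_\lambda(0)}=+1$. Summing over $\lambda\in\S(\SI)$, exchanging the two sums, and applying~\eqref{equ:q} to the inner sum over spin structures --- which equals $|\S(\SI)|$ when $[P]=0$ and vanishes otherwise --- I obtain
\[
\sum_{\lambda\in\S(\SI)}(\det\KW_{\lambda+[u_1,..,u_m]}(G,x))^{1/2}~=~|\S(\SI)|\cdot\sum_{P\in\cE_0(G)}(-1)^{\varkappa\cdot P}x(P)\,.
\]
Running the same computation with $\varkappa=\emptyset$ is precisely Corollary~\ref{cor:low} and gives $|\S(\SI)|\cdot\cZ_{\opname{low},\SI}(G,x)=|\S(\SI)|\sum_{P\in\cE_0(G)}x(P)$ for the denominator.

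It then remains to check that
\[
\mathbb{E}^+_{G^*}[\sigma_{u_1}\dots\sigma_{u_m}]~=~\frac{\sum_{P\in\cE_0(G)}(-1)^{\varkappa\cdot P}x(P)}{\sum_{P\in\cE_0(G)}x(P)}\,.
\]
Here one invokes the domain-walls representation of the Ising model on $G^*$ with `$+$' boundary conditions on every boundary component of $\SI$: spin configurations $\sigma$ are in bijection with $P\in\cE_0(G)$, the Boltzmann weight of $\sigma$ is a common prefactor times $x(P)$ with $x_e=\exp(-2\beta J_{e^*})$, and the relation~\eqref{eqn:prod_s=kappa_P}, namely $\sigma_{u_1}\dots\sigma_{u_m}=(-1)^{\varkappa\cdot P}$, holds verbatim, the role played in the planar case by the outer face $u_{\opname{out}}$ being played here by the boundary components, each carrying spin $+1$ (in particular $(-1)^{\varkappa\cdot P}$ restricted to $\cE_0(G)$ does not depend on the admissible choice of $\varkappa$). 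Substituting these identities and cancelling the common prefactor completes the proof.

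This argument is really an assembly of ingredients already in hand --- Theorem~\ref{thm:KW2} and its twisted version~\eqref{equ:spin2}, equation~\eqref{equ:q}, Corollary~\ref{cor:low}, and the domain-walls dictionary --- so I do not anticipate a genuine obstacle. The two points deserving care are: (i) checking that the square-root branch in~\eqref{equ:spin2} is uniformly the one with constant coefficient $+1$ in $\lambda$ and $\varkappa$, which is exactly the observation about $P=\emptyset$ above; and (ii) pinning down the meaning of `$+$' boundary conditions on a surface-with-boundary --- namely that each boundary component of $\SI$ contributes a single vertex of $G^*$ whose spin is fixed to $+1$ --- so that $\varkappa$, its intersection number $\varkappa\cdot P$ with domain-wall configurations, and the identity~\eqref{eqn:prod_s=kappa_P} all make sense exactly as in Section~\ref{sub:spin}.
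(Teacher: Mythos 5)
Your proof is correct and follows essentially the same route as the paper's: the paper also combines the domain-walls identity $\sigma_{u_1}\dots\sigma_{u_m}=(-1)^{\varkappa\cdot P}$ with equation~\eqref{equ:q} to restrict the sum to $[P]=0$, and then invokes~\eqref{equ:spin2} and Corollary~\ref{cor:low} to identify numerator and denominator. The only difference is cosmetic (you start from the determinants and work toward the correlation, the paper goes the other way), and your explicit check of the square-root branch is a welcome addition.
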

\begin{proof}
The bijection between spin configurations on~$G^*$ and domain walls in~$G$ gives
\[
\mathbb{E}^+_{G^*}[\sigma_{u_1}\dots\sigma_{u_m}] ~=~ \left[{\cZ_{\opname{low},\SI}(G,x)}\right]^{-1}\cdot{\sum_{P:[P]=0}(-1)^{\varkappa\cdot P}x(P)}\,.
\]
By equation~\eqref{equ:q}, we get
\begin{align*}
\sum_{P:[P]=0}(-1)^{\varkappa\cdot P}x(P)~ & = \sum_{P\in\mathcal{E}(G)}\biggl[\Big(\frac{1}{|\S(\SI)|}\sum_{\lambda\in \S(\SI)}(-1)^{q_\lambda([P])}\Big)(-1)^{\varkappa\cdot P}x(P)\biggr] \\
& = \frac{1}{|\S(\SI)|}\sum_{\lambda\in \S(\SI)}\biggl[\,\sum_{P\in\mathcal{E}(G)}(-1)^{q_{\lambda}([P])}(-1)^{\varkappa\cdot P}x(P)\biggr]\,.
\end{align*}
The result now follows from equation~\eqref{equ:spin2} and Corollary~\ref{cor:low}.
\end{proof}

Let us conclude this section by mentioning that Remark~\ref{rem:formulas-via-K^-1} readily extends to the surface case, as well as Theorem~\ref{thm:multipoint}. More precisely, the latter result takes the following form: for any spin structure~$\lambda$, the Pfaffian of a submatrix of the corresponding matrix~$\wh{\rK}^{-1}_\lambda$ admits a combinatorial expansion similar to Theorem~\ref{thm:multipoint}, provided definition~\eqref{eqn:tau-def} of the sign~$\tau(P)$ of a configuration~$P\in\cC(e_1,\dots,e_{2n})$ smoothed into~$C\sqcup\gamma$ takes the additional factor~$(-1)^{q_\lambda(C)}$. Also, one can easily extend to the surface case the dimer techniques discussed in Section~\ref{sub:planar-dimers} and the formalism of Grassmann variables discussed in Section~\ref{sub:grassmann}. On the other hand, one should be more careful when using the disorder insertions discussed in Section~\ref{sub:disorders} for graphs embedded in surfaces, and, especially, when matching the two formalisms in the style of Section~\ref{sub:equivalence}. The subtle point is that the formal correlation functions~$\langle\mu_{v_1}\dots\mu_{v_{2n}}\sigma_{u_1}\dots\sigma_{u_m}\rangle^{[\varkappa]}$ now \emph{depend} on the homology class~$[\varkappa]$ of the paths~$\varkappa=\varkappa^{[v_1,..,v_{2n}]}$ chosen to define the set of configurations~$C^{[\varkappa]}(v_1,\dots,v_{2n}):=\{P:P\btu \varkappa\in\cE_0(G)\}$, and one should sum over~$[\varkappa]$ to recover the spin-disorder duality. Further, an appropriate analog of Lemma~\ref{lemma:mu-sigma} for the fixed homology class~$[\varkappa]$ involves a summation over~$\S(\SI)$ similar to that in the right-hand side of the equations of Corollary~\ref{cor:low} and Proposition~\ref{prop:spin2}, and one should keep track of additional signs (coming from the non-canonical identifications of the double-covers~$G^{[\varkappa]}$) when summing over~$[\varkappa]$. These technicalities become even more involved when dealing with general formal correlations considered in Proposition~\ref{prop:equivalence} though, in principle, all the details can be fixed.


\setcounter{equation}{0}
\section{The double-Ising model}
\label{sec:double}

In this section, we assume the weighted graph  $(G,x)$ to be embedded in the plane and consider the double-Ising model defined on the faces of the modified graph~$\widetilde{G}$, see Section~\ref{sub:intro-dblI} for the definitions and notation. Recall that the partition function of this model is given by
\begin{equation}\label{eqn:dblI-domain-walls}
\cZ_{\opname{dbl-I}}(G,x)~=~\sum\nolimits_{P,P'\in\cE(\widetilde{G})\,:\, (P\btu P')\cap E_\partial(G)=\emptyset} x(P)x(P')\,,
\end{equation}
where~$E_\partial(G)\subset E(G)$ is the set of boundary edges of~$G$, and the modified Kac--Ward matrix~$\widetilde{\rK}$ is defined as
\begin{equation}
\label{eqn:tilde-K-entries}
\widetilde{\rK}_{e,e'}=\rK_{e,e'}+ \begin{cases}ix_e &\text{if~$e'=e$ is an inward oriented boundary edge;} \\ 0 & \text{otherwise.} \end{cases}
\end{equation}
Below we describe how the combinatorial methods developed for the study of the (single-) Ising model on faces of~$G$ may be extended and used to study the double-Ising model. We give the proof of Theorem~\ref{thm:KW3} and Proposition~\ref{prop:spin4} in Section~\ref{sub:dblI-partition-fct}. In Section~\ref{sub:dblI-Dobrushin}, we prove an analog of Theorem~\ref{thm:KW3} for Dobrushin boundary conditions~\eqref{eqn:dblI-Dobrushin-bc}. The last Section~\ref{sub:dblI-s-hol-fct} is devoted to a discussion of s-holomorphic observables arising in the double-Ising model context and discrete boundary value problems for them.

\subsection{Proofs of Theorem~\ref{thm:KW3} and Proposition~\ref{prop:spin4}}\label{sub:dblI-partition-fct}

The aim of this section is to give a proof of Theorem~\ref{thm:KW3} and Proposition~\ref{prop:spin4}. Let us begin by recalling that the Kac--Ward formula~\eqref{eqn:KWformula} and Theorem~\ref{thm:KW1} for the single Ising model can be rewritten as
\begin{equation}
\label{eqn:det-K=Ising^2}
(\cZ_{\opname{Ising}}(G,x))^2 = ~(\Pf{\wh\rK})^2= ~\det\wh\rK ~ = ~ (-1)^{|E(G)|}\det\rK\,,
\end{equation}
where the Hermitian matrix~$\rK$ is given by~\eqref{eqn:K-entries}. The proof of this theorem discussed in Section~\ref{sec:planar} uses the combinatorial expansion of the \emph{Pfaffian} of the anti-symmetric matrix~$\wh\rK$. Alternatively, one could expand the \emph{determinant} of~$\rK$ for this purpose. The resulting proof of Theorem~\ref{thm:KW1} is slightly more cumbersome as we need to compare sums over more complicated configurations (double-dimers on the terminal graph, instead of {single dimers}). However, this approach does have an advantage: it generalizes to a proof of Theorem~\ref{thm:KW3}. Having this goal in mind, we start with a direct combinatorial expansion of the determinant of~$\rK$.

\begin{proof}[Second proof of Theorem~\ref{thm:KW1}] Squaring the right-hand side of the identity
\[
\cZ_{\opname{Ising}}(G,x)~=\sum_{D\in\cD(G^\rK)}(-1)^{\rt(D)}x^\rK\!(D)
\]
provided by Lemma~\ref{lemma:terminal} leads to a sum over pairs of dimer configurations on the terminal graph~$G^\rK$.
Any such pair defines a union~$C$ of vertex-disjoint unoriented loops of even length covering all the vertices of~$G^\rK$, some of length two, called
{\em double-edges\/}, and some of greater length, called {\em cycles\/}. We shall write~$\Gamma_{\opname{even}}(G^\rK)$ for the set of such configurations of edges of~$G^\rK$. Given a configuration $C\in\Gamma_{\opname{even}}(G^\rK)$, let~$C_0$ be the collection of its double-edges,~$C_1,\dots,C_\ell$ denote the (unoriented) cycles, and let
\[
x^\rK(C):=\prod_{e\in C_0} (x^\rK_e)^2\cdot \prod_{j=1}^\ell x^\rK(C_j)\,.
\]
For each of these~$\ell=\ell(C)$ cycles there are two ways to split it into two sets of dimers, and therefore
\begin{align}
\label{eqn:Z^2-expansion}
(\cZ_{\opname{Ising}}(G,x))^2~&= \sum_{D,D'\in\D(G^\rK)}(-1)^{\rt(D)+\rt(D')}x^\rK(D)x^\rK(D') \notag\\
	&= \sum_{C=D\cup D'\in\Gamma_{\opname{even}}(G^\rK)}(-1)^{\rt(C)+D\cdot D'}2^{\ell(C)}x^\rK(C)\,.
\end{align}
Note that we slightly abuse the notation using the intersection number~$D\cdot D'$ in the sum as this number depends on the particular way how~$C$ is split into~$D$ and~$D'$. Nevertheless, we shall see below that the parity of this intersection number depends on~$C$ only.

On the other hand, expanding the determinant of~$\rK$ leads to
\begin{equation}
\label{eqn:K-det-expansion}
\det\rK ~= ~\sum\nolimits_{\overrightarrow{C}\in\overrightarrow{\Gamma}(G^\rK)} \tau(\overrightarrow{C})x^\rK(C)\,,
\end{equation}
where~$\overrightarrow{\Gamma}(G^\rK)$ is the set of all vertex-disjoint unions~$\overrightarrow{C}= C_0\sqcup \bigsqcup_{j=1}^\ell\overrightarrow{C_j}$ of double-edges and  {\em oriented\/} cycles of {\em arbitrary\/} length covering all the vertices of~$G^\rK$, the weight~$x^\rK(C)$ does not depend on the orientation of these cycles and
\[
\tau(\overrightarrow{C})=(-1)^{|C_0|}\cdot \prod_{j=1}^\ell \tau(\overrightarrow{C_j})
\]
is some sign, that we now determine.

\smallskip

\noindent{\em Claim A.} Let~$\overleftarrow{C_j}$ denote the cycle~$\overrightarrow{C_j}$ with the orientation reversed. If the length~$|C_j|$ of {this cycle} is odd, then~$\tau(\overleftarrow{C_j})=-\tau(\overrightarrow{C_j})$. If~$|C_j|$ is even, then
\begin{equation}
\label{eqn:tau-def-section-5}
\tau(\overleftarrow{C_j})=\tau(\overrightarrow{C_j})=(-1)^{|C_j|/2}\,(-1)^{\rt(C_j)+v_-(\overrightarrow{C_j})}\,,
\end{equation}
where~$v_-(\overrightarrow{C_j})$ is the number of vertices of~$G^\rK$ visited by~$\overrightarrow{C_j}$ in such a way that both adjacent edges are short and this oriented cycle makes a clockwise turn at this vertex.

\smallskip

Before giving a proof of Claim~A, note that it has the following consequence: {the expansion~\eqref{eqn:K-det-expansion} for} the determinant of~$\rK$ {simplifies} to a sum over elements of~$\Gamma_{\opname{even}}(G^\rK)$. After that, the {only fact remaining to be proved in order to obtain~\eqref{eqn:det-K=Ising^2}} is the {equality} of the signs in the two expansions~\eqref{eqn:Z^2-expansion} and~\eqref{eqn:K-det-expansion}, which is done in the Claim~B below.

\smallskip

\noindent {\em Proof of Claim~A.} By definition of the matrix~$\rK$, {and expanding its determinant as in~\eqref{eqn:K-det-expansion}}, we have
\[
\tau(\overrightarrow{C_j})=(-1)^{|C_j|+1}\,(-1)^{s(C_j)}\,\omega(\overrightarrow{C_j})\,,
\]
where~$s(C_j)$ denotes the number of short edges in~$C_j$ and~$\omega(\overrightarrow{C_j})$ is the product of the
coefficients~$\exp[\frac{i}{2}\rw(\overline{e},e')]$ along the oriented cycle~$\overrightarrow{C_j}$. Computing the total rotation angle of (the velocity vector of) this cycle leads to
\[
\exp\left[{\textstyle\frac{i}{2}}\wind(\overrightarrow{C_j})\right] = \omega(\overrightarrow{C_j})\cdot i^{v_+(\overrightarrow{C_j})}(-i)^{v_-(\overrightarrow{C_j})}\,,
\]
with~$v_+(\overrightarrow{C_j})$ defined as~$v_-(\overrightarrow{C_j})$ but counting counterclockwise turns instead of clockwise ones, see Fig.~\ref{Fig:V+-}. Therefore, Lemma~\ref{lemma:Whitney} and the equality~$2s(C_j)=|C_j|+v_-(\overrightarrow{C_j})+v_+(\overrightarrow{C_j})$ imply
\begin{align*}
\tau(\overrightarrow{C_j}) & =(-1)^{\rt(C_j)+|C_j|+s(C_j)}\cdot i^{v_-(\overrightarrow{C_j})-v_+(\overrightarrow{C_j})}\\
&=(-1)^{\rt(C_j)+v_-(\overrightarrow{C_j})}\cdot (-i)^{|C_j|}.
\end{align*}
The claim follows easily since the two numbers~$v_-(\overrightarrow{C_j})$ and~$v_-(\overleftarrow{C_j})=v_+(\overrightarrow{C_j})$ have the opposite parity if~$|C_j|$ is odd and
the same parity if~$|C_j|$ is even.

\smallskip

Note that we have $|C_0|+\sum_{j=1}^\ell|C_j|/2=|E(G)|$ for any~$C\in \Gamma_{\opname{even}}(G^\rK)$. Thus, to deduce formula~\eqref{eqn:det-K=Ising^2} from expansions~\eqref{eqn:Z^2-expansion} and~\eqref{eqn:K-det-expansion}, it is enough to prove the following claim.

\smallskip

\begin{figure}
\captionsetup[subfigure]{position=below,justification=justified,singlelinecheck=false,labelfont=bf}
\begin{subfigure}[t]{.45\textwidth}
\psfig{file=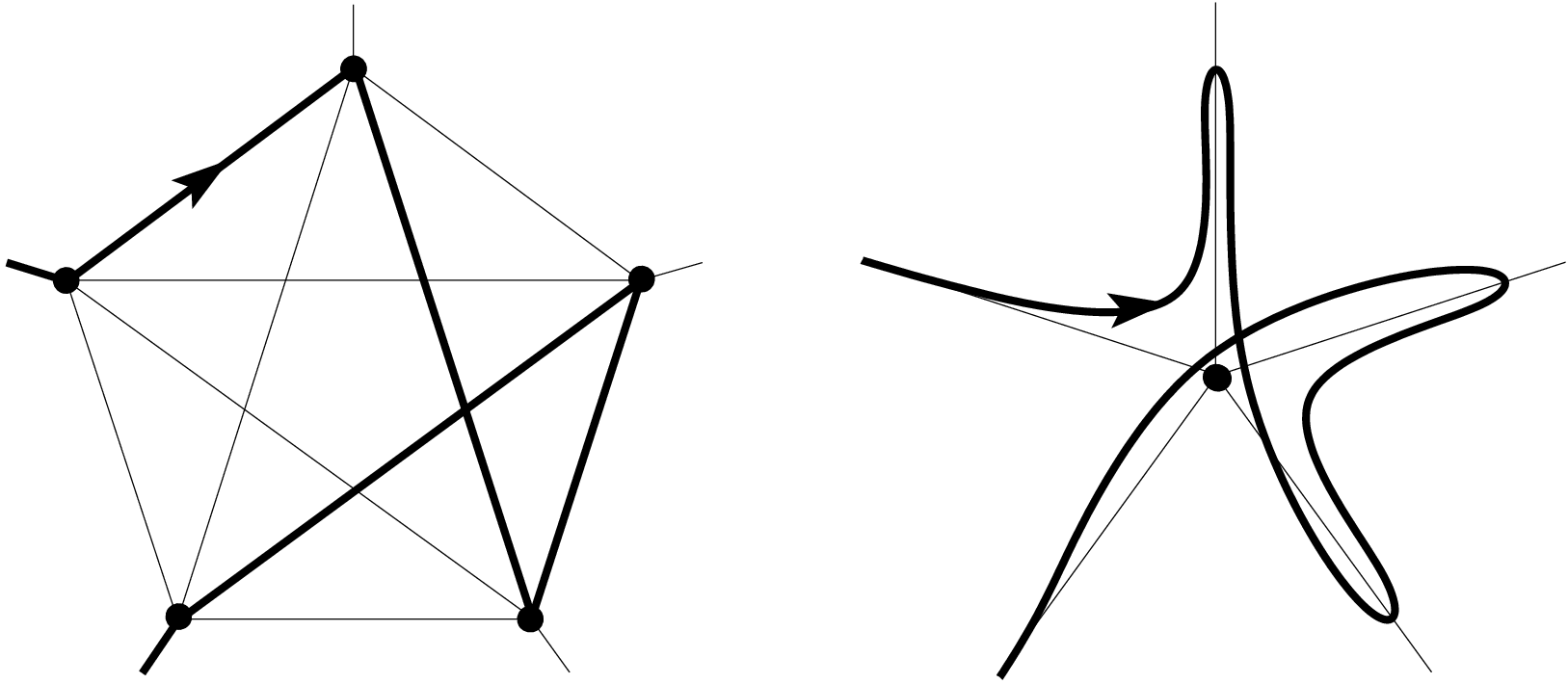,width=.8\linewidth}
\caption{{A simple deformation of a path on the graph~$G^\rK$ into a path following the edges of~$G$:
the rotation angle gains exactly~$\pm \pi$ at vertices where both adjacent edges are short}}
\label{Fig:V+-}
\end{subfigure}
\hfill
\begin{subfigure}[t]{.5\textwidth}
\labellist\small\hair 2.5pt
\pinlabel {$D$} at 6 6
\pinlabel {$D$} at 280 6
\pinlabel {$D$} at 804 6
\pinlabel {$D'$} at 412 6
\pinlabel {$D'$} at 680 6
\pinlabel {$D'$} at 1075 6
\endlabellist
\psfig{file=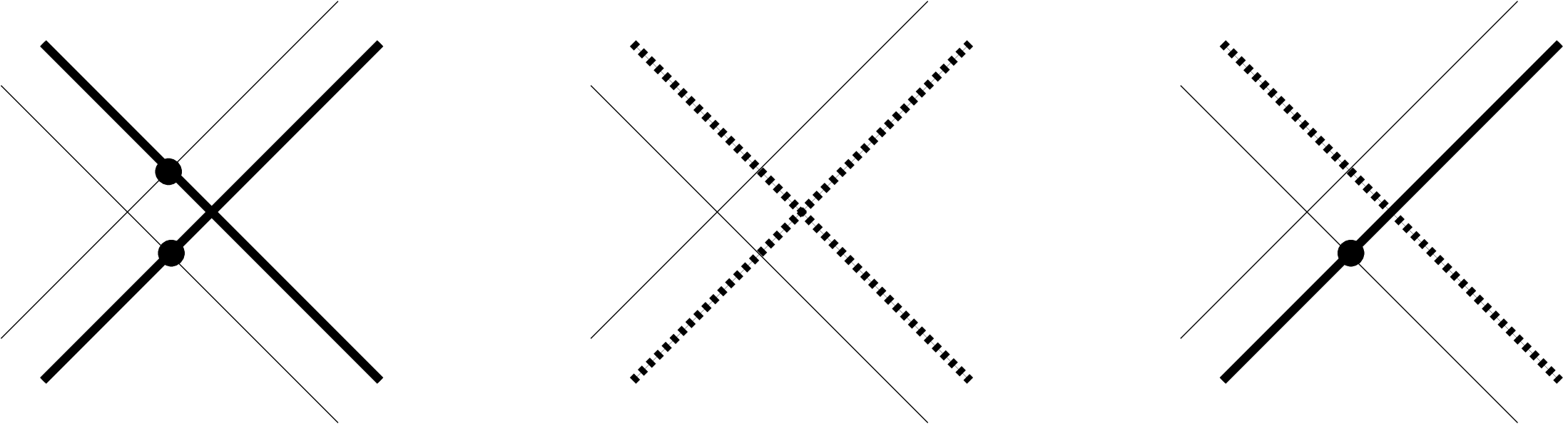,width=\linewidth}
\caption{Each intersection of~$D$ with itself (resp.~$D'$ with itself,~$D$ with~$D'$) induces two (resp. zero, one) transverse intersections between~$(D\btu D')_-$ and~$D$}
\label{Fig:DD'}
\end{subfigure}
\caption{Topological arguments used in the proofs of Claim A and Claim B}
\end{figure}

\noindent{\em Claim B.} For any~$D,D'\in\D(G^\rK)$, the number~$D\cdot D'$ has the same parity as~$v_-(D\btu D')$ defined as the sum of $v_-(\overrightarrow{C})$ over all components $C$ of~$D\btu D'$ oriented arbitrarily.

\smallskip

\noindent{\em Proof of Claim B.}
Let us denote by~$(D\btu D')_-$ the collection of disjoint, arbitrarily oriented, loops~$D\btu D'$ pushed slightly to their left, so that they intersect the edges of~$G^\rK$ transversally. Further, if~$D_0$ denotes the set of all long edges of~$G^\rK$, then $D\btu D_0$ is also a collection of loops, which gives
\[
(D\btu D')_-\cdot(D\btu D_0) = 0 \mod 2\,.
\]
Now observe that the number of intersections of~$(D\btu D')_-$ with short edges of~$D$ has the same parity as~$D\cdot D'$, see Fig.~\ref{Fig:DD'}. At the same time, the number of intersections of~$(D\btu D')_-$ with (long) edges of~$D_0\setminus D$ is exactly~$v_-(D\btu D')$, {since such intersections happen only in vicinities of the vertices, where both adjacent edges of
$D\btu D'$ are short and it makes a clockwise turn so that~$(D\btu D')_-$ is pushed ``towards'' the long dimer from~$D_0\setminus D$.} Thus the claim follows.
\end{proof}

We now move on to the proof of Theorem~\ref{thm:KW3}. Let~$\EE_\partial(G)$ denote the set of \emph{inward} oriented boundary edges.
Recall that we denote by~$(G^\diamondsuit,x)$ the weighted graph obtained from~$(G,x)$ by adding a vertex~$z_e$ in the middle of each edge~$e$ of~$G$, and by assigning the weight~$x_e^{1/2}$ to both resulting edges of~$G^\diamondsuit$. For a subset~$\rE=\{e_1,\dots,e_{2n}\}\subset\EE_\partial (G)$, we denote by~$\mathcal{C}(\rE)$ the set of subgraphs~$P$ of~$G^\diamondsuit$ that contain the edges~$(z_{e_k},t(e_k))$, and such that each vertex of~$G^\diamondsuit$ different from~$z_{e_1},\dots,z_{e_{2n}}$ has an even degree in~$P$, see Section~\ref{sub:intro-Pfaffian}. Note that
~$\cC(\rE)$ is empty if~$|\rE|$ is odd and~\eqref{eqn:dblI-domain-walls} can be written as
\begin{equation}
\label{eqn:Z-dblI-as-sum-over-E}
\cZ_{\opname{dbl-I}}(G,x)=\sum_{\rE\subset \EE_\partial(G)} x(\rE)\biggl[\,\sum_{P\in \cC(\rE)} x(P)\biggr]^2
\end{equation}
since, by definition, each boundary edge~$e\in\rE$ contributes only~$x_e^{1/2}$ to the weight~$x(P)$.

\begin{proof}[Proof of Theorem~\ref{thm:KW3}]
For a subset of boundary edges~$\rE\subset \EE_\partial(G)\subset \EE(G)\cong V(G^\rK)$, let~$\rK^\rE$ denote the matrix~$\rK$ with all the rows and the columns that are indexed by~$\rE$ removed. It immediately follows from definition~\eqref{eqn:tilde-K-entries} of the matrix~$\widetilde{\rK}$ that
\[
(-1)^{|E(G)|}\det\widetilde{\rK} ~= ~(-1)^{|E(G)|}\!\!\sum\nolimits_{\rE\subset \EE_\partial(G)} i^{|\rE|}x(\rE)\det\rK^\rE,
\]
thus we need to show that this expansion coincides with~\eqref{eqn:Z-dblI-as-sum-over-E}. Note that the case~$\rE=\emptyset$ was already treated in the second proof of Theorem~\ref{thm:KW1} given above, which we now generalize.

As in the proof of Theorem~\ref{thm:multipoint}, let~$G^\rK_\rE$ be the graph obtained by removing from~$G^\rK$ all the boundary (univalent) vertices corresponding to~$\rE$. A straightforward generalization of Lemma~\ref{lemma:terminal} gives
\begin{align*}
\biggl[\,\sum_{P\in \cC(\rE)} x(P)\biggr]^2 ~& = ~\biggl[\,\sum_{D\in \cD(G^\rK_\rE)}(-1)^{\rt(D)}x^\rK(D)\biggr]^2 \\
& = \sum_{C=D\cup D'\in\Gamma_{\opname{even}}(G^\rK_\rE)}(-1)^{\rt(C)+D\cdot D'}2^{\ell(C)}x^\rK(C)\,,
\end{align*}
where we use the same notation as in the second proof of Theorem~\ref{thm:KW1}. On the other hand, similarly to~\eqref{eqn:K-det-expansion} we have
\begin{equation}
\label{eqn:KE-det-expansion}
\det\rK^\rE ~= ~\sum\nolimits_{\overrightarrow{C}\in\overrightarrow{\Gamma}(G^\rK_\rE)} \tau(\overrightarrow{C})x^\rK(C)\,,
\end{equation}
with the signs~$\tau(\overrightarrow{C})$ given by~\eqref{eqn:tau-def-section-5}. Using this expansion and Claim~A, it is easy to see that~$\det\rK^\rE$ vanishes unless~$|\rE|$ is even: when~$|\rE|$ is odd, each configuration~$C\in\Gamma(G^\rK_\rE)$ contains an odd length cycle, whose two orientations yield a cancellation. Theorem~\ref{thm:KW3} now follows from the equality~$|V(G^\rK_\rE)|=2|E(G)|-|\rE|$ and a proper generalization of Claim~B from the second proof of Theorem~\ref{thm:KW1}, which we formulate below for completeness.

\smallskip

\noindent {\em Claim B'.} For any~$D,D'\in\D(G^\rK_\rE)$, the number~$D\cdot D'$ has the same parity as~$v_-(D\btu D')$ defined as the sum of $v_-(\overrightarrow{\gamma})$ over all components $\gamma$ of~$D\btu D'$ oriented arbitrarily.

\smallskip

\noindent{\em Proof of Claim B'.} The proof repeats the proof of Claim~B given above. The only difference is that~$D\btu D_0$ is \emph{not} a collection of closed loops anymore: if the set~$\rE$ is non-empty,~$D\btu D_0$ also contains $|\rE|/2$ paths linking the boundary edges~$e\in\rE$. Nevertheless, the equality
\[
(D\btu D')_-\cdot (D\btu D_0)=0\mod 2
\]
remains correct and the claim follows due to the same arguments.
\end{proof}

\begin{remark} It is worth noting that one can use Theorem~\ref{thm:multipoint} in the proof given above to handle the case~$|\rE|$ even. Indeed, if~$\rE$ is a collection of \emph{boundary} edges, then it is not hard to check that the sign~$\tau(P)$ defined by~\eqref{eqn:tau-def} is independent of~$P\in\cC(\rE)$, so the equality
\[
\Big[\,\sum\nolimits_{P\in \cC(\rE)} x(P)\,\Big]^2= ~ (\,\Pf{\wh{\rK}^\rE}\,)^2~=~\det \wh{\rK}^\rE ~=~ (-1)^{|V(G^\rK_\rE)|/2}\det \rK^\rE
\]
follows easily. Nevertheless, one still needs some additional arguments in the spirit of Claim~A to see that~$\det\rK^\rE$ vanishes if~$|\rE|$ is odd.
\end{remark}

We now prove Proposition~\ref{prop:spin4}.

\begin{proof}[Proof of Proposition~\ref{prop:spin4}] It easily follows from the domain walls representation~\eqref{eqn:dblI-domain-walls} that
\[
\mathbb{E}_{\opname{dbl-I}}^+ [\widetilde{\sigma}_{u_1}\dots\widetilde{\sigma}_{u_m}] = \frac{\sum\nolimits_{P,P'\in\cE(\widetilde{G})\,:\, (P\btu P')\cap E_\partial(G)=\emptyset}\;(-1)^{\varkappa\cdot P}\!x(P)\,(-1)^{\varkappa\cdot P'}\!x(P')}{\cZ_{\opname{dbl-I}}(G,x)}\,.
\]
Similarly to Proposition~\ref{prop:spin1}, repeating the proof of Theorem~\ref{thm:KW3} with additional signs~$(-1)^{\varkappa\cdot P}$ one concludes that the numerator is equal to~$(-1)^{|E(G)|}\det\widetilde{\rK}_{[u_1,..,u_m]}$\,.
\end{proof}

\subsection{Dobrushin boundary conditions}\label{sub:dblI-Dobrushin}

In this section, we prove a version of Theorem~\ref{thm:KW3} for the double-Ising model with Dobrushin boundary conditions~\eqref{eqn:dblI-Dobrushin-bc}. For this purpose, let us introduce a matrix
\[
\widetilde{\rK}^{[a,b]}= \big[\,\widetilde{\rK}^{[a,b]}_{e,e'}\,\big]_{e\ne a, e'\ne b}
\]
(for definiteness with signs of determinants, below we always assume that the column of~$\widetilde{\rK}^{[a,b]}$ labeled by~$a$ corresponds to the row labeled by~$b$) with the entries
\begin{equation}
\label{eqn:tilde-Kab-entries}
\widetilde{\rK}^{[a,b]}_{e,e'}:=\rK_{e,e'}+
\begin{cases}ix_e &\text{if~$e'=e$ is an inward oriented boundary edge on~$(ab)$;} \\
-ix_e &\text{if~$e'=e$ is an inward oriented boundary edge on~$(ba)$;} \\
0 & \text{otherwise.} \end{cases}
\end{equation}

\begin{theorem} \label{thm:KW4}
The partition function~$\cZ_{\opname{dbl-I}}^{[a,b]}(G,x)$ of the double-Ising model with Dobrushin boundary conditions~\eqref{eqn:dblI-Dobrushin-bc} is given by
\[
{\textstyle\frac{1}{2}}(x_ax_b)^{-1/2}\cdot\cZ_{\opname{dbl-I}}^{[a,b]}(G,x)~=~
\overline{\rw_{b,\overline{a}}}\,\cdot(-1)^{|E(G)|-1}\det\widetilde{\rK}^{[a,b]},
\]
where the prefactor~${\rw_{b,\overline{a}}}$ is defined as follows:
${\rw_{b,\overline{a}}}:=\exp[{\textstyle\frac{i}{2}}\wind(\gamma_{b,\overline{a}})]$
for any non-self-intersecting path~$\gamma_{b,\overline{a}}$ running from~$b$ to~$\overline{a}$ along edges of~$G$ (note that the total rotation angle~$\wind(\gamma_{b,\overline{a}})$ does not depend on the choice of this path).
\end{theorem}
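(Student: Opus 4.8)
The plan is to adapt the ``second proof of Theorem~\ref{thm:KW1}'' (the determinant expansion of $\rK$) together with the proof of Theorem~\ref{thm:KW3}, tracking the extra signs introduced by the Dobrushin data. First I would record the domain-walls representation of $\cZ_{\opname{dbl-I}}^{[a,b]}(G,x)$. Following Section~\ref{sub:intro-dblI}, the Dobrushin conditions~\eqref{eqn:dblI-Dobrushin-bc} mean that $\widetilde{\sigma}=\sigma\sigma'$ is forced to be $-1$ on boundary faces of the arc $(ab)$ and $+1$ on those of $(ba)$; encoding $\sigma,\sigma'$ by pairs of domain walls $P,P'\in\cE(\widetilde G)$ one gets that the symmetric difference $P\btu P'$ must meet $E_\partial(G)$ exactly along a set of boundary edges whose ``parity pattern'' along the boundary is prescribed by the marked edges $a,b$: concretely, $(P\btu P')\cap E_\partial(G)$ is forced to consist of an odd number of edges on each of the two boundary arcs in a way that matches the jump of $\widetilde\sigma$ at $a$ and $b$. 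Rewriting this in the style of~\eqref{eqn:Z-dblI-as-sum-over-E}, one obtains
\[
{\textstyle\frac{1}{2}}(x_ax_b)^{-1/2}\cZ_{\opname{dbl-I}}^{[a,b]}(G,x)~=~\sum_{\rE} x(\rE)\Big[\,\sum_{P\in\cC(\rE\cup\{a,b\})}x(P)\,\Big]^2,
\]
the sum being over subsets $\rE\subset\EE_\partial(G)\setminus\{a,b\}$ of even cardinality, and the inner sum over configurations in $G^\diamondsuit$ with simple paths emanating from the half-edges at $z_a,z_b$ and at the edges of $\rE$; the prefactor $\tfrac12(x_ax_b)^{-1/2}$ absorbs the half-weights of $a,b$ exactly as in~\eqref{eqn:Z-dblI-as-sum-over-E}.

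Next I would expand the right-hand side. By~\eqref{eqn:tilde-Kab-entries}, $\det\widetilde\rK^{[a,b]}=\sum_{\rE} (\pm i)^{|\rE|}x(\rE)\,\varepsilon_{a,b}(\rE)\det(\rK^{\rE\cup\{a,b\}})$, where the row/column labelled $a$ is paired with the one labelled $b$, and where the sign pattern in~\eqref{eqn:tilde-Kab-entries} contributes $(-i)$ on $(ab)$ and $(+i)$ on $(ba)$. Because $|\rE|$ is even, the product of these $\pm i$ factors reduces to a single explicit sign depending on how many edges of $\rE$ lie on $(ab)$ versus $(ba)$; on the other hand, the parity of $|\rE\cap(ab)|$ is itself constrained by the Dobrushin pattern, so this sign is in fact constant over the admissible $\rE$'s and can be computed once and for all. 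For the ``bulk'' factor I would invoke verbatim the combinatorial identities established in the second proof of Theorem~\ref{thm:KW1} and in the proof of Theorem~\ref{thm:KW3} (Claims~A, B, B$'$): $\det\rK^{\rE\cup\{a,b\}}$ expands over $\overrightarrow\Gamma(G^\rK_{\rE\cup\{a,b\}})$, odd-length cycles cancel under orientation reversal so only $\Gamma_{\opname{even}}$ survives, $\tau(\overrightarrow C)$ is given by~\eqref{eqn:tau-def-section-5}, and the sign comparison with $\big[\sum_{P\in\cC(\rE\cup\{a,b\})}x(P)\big]^2=\sum_{C=D\cup D'}(-1)^{\rt(C)+D\cdot D'}2^{\ell(C)}x^\rK(C)$ works because $D\btu D_0$ — now a collection of loops together with paths linking the vertices of $\rE\cup\{a,b\}$ — still satisfies $(D\btu D')_-\cdot(D\btu D_0)=0\bmod 2$. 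The only genuinely new point is that the two distinguished removed vertices $a$ and $b$ are joined by a single path $\gamma$ inside $D\btu D_0$ (rather than being ``free''), and the sign $\tau$ attached to that open path, via Lemma~\ref{lemma:Whitney} applied to the curve obtained by closing $\gamma$ up, produces precisely a factor $\exp[\tfrac{i}{2}\wind(\gamma_{b,\overline a})]=\rw_{b,\overline a}$; taking complex conjugates (the observable transforms like a spinor, cf.~\eqref{eqn:phi-spinor}) yields the stated $\overline{\rw_{b,\overline a}}$ in front.

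I would then assemble the pieces: the prefactors $(-1)^{|E(G)|-1}$ and $\overline{\rw_{b,\overline a}}$, the constant $\pm i$-sign coming from~\eqref{eqn:tilde-Kab-entries}, and the equality $|V(G^\rK_{\rE\cup\{a,b\}})|=2|E(G)|-|\rE|-2$ (so that the power of $-1$ converting $\det\rK^{\rE\cup\{a,b\}}$ to a squared Pfaffian / a sum of $2^{\ell}$ terms comes out right, uniformly in $\rE$), and check that they combine to give exactly the claimed identity; verifying that the overall sign is $+1$ in the empty-cut ``reference'' case $\rE=\emptyset$, $P$ the minimal path from $z_a$ to $z_b$, pins it down. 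The main obstacle I anticipate is the careful bookkeeping of signs attached to the open path $\gamma_{b,\overline a}$ — in particular making sure the winding-angle factor is independent of the path (which, as the statement notes, follows because $G$ is embedded in the plane so any two such paths differ by loops, each contributing $\wind\in 2\pi\Z$) and that it is correctly matched against the $\tau$-sign coming from the determinant expansion, rather than its inverse or negative. Everything else is a routine, if lengthy, transcription of the arguments already given for Theorems~\ref{thm:KW1} and~\ref{thm:KW3}.
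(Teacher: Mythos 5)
Your overall strategy (expand $\det\widetilde{\rK}^{[a,b]}$ over cycle-and-path configurations on the terminal graph, reuse Claims A/B/B$'$, and extract the winding prefactor from the open path via Lemma~\ref{lemma:Whitney}) is the same as the paper's, but your very first step is wrong, and the error propagates. The Dobrushin conditions~\eqref{eqn:dblI-Dobrushin-bc} force $\widetilde\sigma$ to jump exactly at $a$ and at $b$, i.e. $(P\btu P')\cap E_\partial(G)=\{a,b\}$: \emph{exactly one} of $P,P'$ contains the half-edge at $a$ and exactly one contains the half-edge at $b$. Hence the partition function is \emph{not} the sum of squares $\sum_{\rE}x(\rE)\bigl[\sum_{P\in\cC(\rE\cup\{a,b\})}x(P)\bigr]^2$ you wrote (that sum counts pairs with $a,b\in P\cap P'$, i.e. no jump at $a$ or $b$, which is a `$+$'-type configuration). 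The correct representation is the asymmetric one,
\[
{\textstyle\frac{1}{2}}(x_ax_b)^{-1/2}\cZ_{\opname{dbl-I}}^{[a,b]}
=\sum_{\rE}x(\rE)\Bigl[\sum_{P\in\cC(\rE)}x(P)\!\!\sum_{P'\in\cC(\{a,b\}\cup\rE)}\!\!x(P')\Bigr]
+\sum_{\rE}x(\rE)\Bigl[\sum_{P\in\cC(\{a\}\cup\rE)}x(P)\!\!\sum_{P'\in\cC(\{b\}\cup\rE)}\!\!x(P')\Bigr],
\]
with even $|\rE|$ feeding the first sum and odd $|\rE|$ the second. This is not cosmetic: only the asymmetric product yields, on the terminal graph, configurations $D\cup D'$ consisting of double-edges, even cycles, \emph{and a single open path joining $a$ to $b$}, which is exactly what the expansion of $\det\widetilde{\rK}^{[a,b]}$ (row $a$ and column $b$ removed, not both rows and columns of each) produces and is the sole source of the prefactor $\rw_{b,\overline a}$. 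With your squared sum, both dimer covers omit $a$ and $b$, $D\cup D'$ has no open path, and the matching with the determinant fails.

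Two further points need repair. First, your claim that the phases from~\eqref{eqn:tilde-Kab-entries} ``reduce to a single explicit sign\dots constant over the admissible $\rE$'s'' is false: the factor $i^{|\rE\cap(ab)|}(-i)^{|\rE\cap(ba)|}$ genuinely varies with $\rE$ (and both parities of $|\rE|$ occur). It is absorbed only through a refined version of Claim~B, namely $D\cdot D'=v_-(D\btu D')+|\rE\cap(ab)|\bmod 2$, whose proof uses that the shifted path $\gamma_-$ must cross the paths of $D\btu D_0$ linking the edges of $\rE$ a number of times of the same parity as $|\rE\cap(ab)|$; this extra term is precisely what you would miss. Second, the appearance of $\overline{\rw_{b,\overline a}}$ is not a ``spinor conjugation'': one computes $\tau(\gamma)=\rw_{b,\overline a}\cdot(-1)^{\rt(\gamma)+v_-(\gamma)}(-i)^{|\gamma|-1}$ from $\exp[\frac i2\wind(\gamma)]=\rw_{b,\overline a}(-1)^{\rt(\gamma)}$ (closing $\gamma$ up against a reference path and applying Lemma~\ref{lemma:Whitney}), and then multiplies the whole determinant by the unit number $\overline{\rw_{b,\overline a}}$ to strip this phase and leave a real combinatorial sum. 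Once the starting representation is corrected, the rest of your plan does go through along the paper's lines.
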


\begin{proof} The proof goes along the same lines as the proof of Theorem~\ref{thm:KW3}.
Using domain walls representations of spin configurations~$\sigma,\sigma'$ satisfying~\eqref{eqn:dblI-Dobrushin-bc}, one easily obtains
\begin{align*}
{\textstyle\frac{1}{2}}(x_ax_b)^{-1/2}\cZ_{\opname{dbl-I}}^{[a,b]}(G,x)~&=~\sum\nolimits_{\rE\subset \EE_\partial(G)\setminus\{a,b\}} \biggl[\,x(\rE)\sum\nolimits_{P\in \cC(\rE)} x(P)\sum\nolimits_{P'\in \cC(\{a,b\}\cup\rE)} x(P')\biggr] \\
&+~\sum\nolimits_{\rE\subset \EE_\partial(G)\setminus\{a,b\}}\biggl[\,x(\rE)\sum\nolimits_{P\in \cC(\{a\}\cup\rE)} x(P)\sum\nolimits_{P'\in \cC(\{b\}\cup\rE)} x(P')\biggr]\,,
\end{align*}
where even subsets~$\rE$ contribute only to the first sum while odd ones only to the second. Passing from~$P,P'$ to dimer configurations~$D,D'$ on the graphs~$G^\rK_\rE$,~$G^\rK_{\rE\cup\{a,b\}}$ or~$G^\rK_{\rE\cup\{a\}}$,~$G^\rK_{\rE\cup\{b\}}$
and considering their union exactly as in the proof of Theorem~\ref{thm:KW3}, one gets
\[
{\textstyle\frac{1}{2}}(x_ax_b)^{-1/2}\cZ_{\opname{dbl-I}}^{[a,b]}(G,x)~=~\sum\nolimits_{\rE\subset \EE_\partial(G)\setminus\{a,b\}} \sum\nolimits_{C=D\cup D'\in\Gamma^{[a,b]}_{\opname{even}}(G^\rK_\rE)}(-1)^{\rt(C)+D\cdot D'}2^{\ell(C)}x^\rK(C),
\]
where~$\Gamma^{[a,b]}_{\opname{even}}(G^\rK_\rE)$ denotes the set of all covers of the graph~$V(G^\rK_\rE)$ by a vertex-disjoint union of a collection of double-edges~$C_0$, even length cycles~$C_1,\dots,C_\ell$ and a \emph{path~$\gamma$} linking~$a$ and~$b$.

At the same time, the following expansion holds:
\[
\det\widetilde{\rK}^{[a,b]} = \sum\nolimits_{\rE\subset \EE_\partial(G)\setminus\{a,b\}} i^{|\rE\cap (ab)|}(-i)^{|\rE\cap (ba)|}x(\rE) \det\rK^{[\{a\}\cup\rE,\{b\}\cup\rE]},
\]
where~$\rK^{[\{a\}\cup\rE,\{b\}\cup\rE]}$ denotes the matrix~$\rK$ with all the rows indexed by~$\{a\}\cup\rE$ and the columns indexed by~$\{b\}\cup\rE$ removed. A straightforward expansion of the determinant of this matrix similar to~\eqref{eqn:K-det-expansion} and~\eqref{eqn:KE-det-expansion}, together with Claim~A from the second proof of Theorem~\ref{thm:KW1}, leads to
\begin{equation}
\label{eqn:det-Kab-E-expansion}
\det\rK^{[\{a\}\cup\rE,\{b\}\cup\rE]} ~= ~\sum\nolimits_{C\in\Gamma^{[a,b]}_{\opname{even}}(G^\rK_\rE)} \tau(C)2^{\ell(C)}x^\rK(C)\,,
\end{equation}
where
\[
\tau(C)=(-1)^{|C_0|}\cdot\prod\nolimits_{j=1}^\ell \left[(-1)^{|C_j|/2}(-1)^{\rt(C_j)+v_-(C_j)}\right]\cdot\tau(\gamma)
\]
(recall that the parity of the number~$v_-(C_j)$ does not depend on the orientation of a cycle~$C_j$ provided it has an even length) and~$\tau(\gamma)$ is the additional contribution of the path~$\gamma$, which we now determine. Similarly to the proof of Claim~A, we have
\[
\tau(\gamma)=(-1)^{|\gamma|+1}(-1)^{s(\gamma)}\omega(\gamma),
\]
where~$|\gamma|$ denotes the number of edges in the path~$\gamma$ and~$\omega(\gamma)$ is the product of the coefficients~$\exp[\frac{i}{2}\rw(\overline{e},e')]$ along this path, when explored \emph{from~$b$ to~$a$}. Comparing~$\omega(\gamma)$ with the total rotation angle of~$\gamma$ leads to
\[
\exp[{\textstyle\frac{i}{2}}\wind(\gamma)] =\omega(\gamma)i^{v_+(\gamma)}(-i)^{v_-(\gamma)}\,.
\]
Using Lemma~\ref{lemma:Whitney}, it is easy to conclude that~$\exp[{\textstyle\frac{i}{2}}\wind(\gamma)]=\rw_{b,\overline{a}}(-1)^{\rt(\gamma)}$. Putting these two equalities together, we arrive at
\begin{align*}
\tau(\gamma)~&=~\rw_{b,\overline{a}}\,\cdot\,(-1)^{\rt(\gamma)+|\gamma|+1+s(\gamma)}\cdot i^{v_-(\gamma)-v_+(\gamma)} \\
& = ~ \rw_{b,\overline{a}}\,\cdot\,(-1)^{\rt(\gamma)+v_-(\gamma)}\cdot (-i)^{|\gamma|-1}\,,
\end{align*}
where we also used the equality~$2s(\gamma)=|\gamma|-1+v_-(\gamma)+v_+(\gamma)$ to pass to the second line. Noting that
\[
\textstyle 2|C_0|+\sum_{j=1}^{\ell}|C_j|+|\gamma|-1= |V(G^\rK_{\rE\cup\{a,b\}})|=2|E(G)|-|\rE|-2\,,
\]
one can rewrite expansion~\eqref{eqn:det-Kab-E-expansion} as
\[
\overline{\rw_{b,\overline{a}}}\,\cdot (-1)^{|E(G)|-1}\det\widetilde{\rK}^{[\rE\cup\{a\},\rE\cup\{b\}]}~ = ~ i^{|\rE|}\sum\nolimits_{C\in\Gamma^{[a,b]}_{\opname{even}}(G^\rK_\rE)} (-1)^{\rt(C)+v_-(C)}2^{\ell(C)}x^\rK(C)\,,
\]
and Theorem~\ref{thm:KW4} is now reduced to the following analog of Claim~$B$ and Claim~$B'$.

\smallskip

\noindent {\em Claim B''.} For any $D\in \cD(G^\rK_\rE)$ and~$D'\in\cD(G^\rK_{\rE\cup\{a,b\}})$ or $D\in \cD(G^\rK_{\rE\cup\{a\}})$ and~$D'\in\cD(G^\rK_{\rE\cup\{b\}})$, depending on the parity of~$|\rE|$, one has
\[
D\cdot D'~=~v_-(D\btu D')+|\rE\cap (ab)|\mod 2,
\]
provided the path~$\gamma$ linking~$a$ and~$b$ in~$D\btu D'$ is oriented from~$b$ to~$a$.

\smallskip

\noindent\emph{Proof of Claim~B''.} Similarly to the proofs of Claim~B and Claim~B', the result follows from the equality
\[
(D\btu D')_-\cdot(D\btu D_0)=|\rE\cap (ab)|\mod 2\,,
\]
where~$(D\btu D')_-$ is a collection of loops and a path~$\gamma_-$ running from~$b$ to~$a$ pushed slightly closer to the arc~$(ab)$, while~$(D\btu D_0)$ is a collection of loops and paths linking the boundary edges~$\rE$, possibly together with~$a$. The number of intersections of~$\gamma_-$ with the latter paths has the same parity as~$|\rE\cap (ab)|$ due to topological reasons.
\end{proof}

\subsection{S-holomorphic functions in the double-Ising model} \label{sub:dblI-s-hol-fct}
The aim of this section is to discuss s-holomorphic functions appearing in the double-Ising model context, see Section~\ref{sub:s-observables} for the terminology. These functions have some potential for the study of the (critical) double-Ising model in bounded planar domains though one needs to develop new tools in order to handle the arising boundary conditions and at the moment it is not clear if one can obtain meaningful convergence results using this approach, see Remark~\ref{rem:dblI-boundary-conditions} and Remark~\ref{rem:dblI-martingale-strategy-oops} below.

Recall that, given a ``source'' edge~$a\in\EE(G)$, the basic s-holomorphic observable in the \emph{single}-Ising model can be defined as
\[
F_a(z_e)~=~ {t_e\cdot (\eta_e\wh{\rK}^{-1}_{e,a}+\eta_{\overline{e}}\wh{\rK}^{-1}_{\overline{e},a})}
~=~ t_e\cdot {(-i\eta_a)}(\rK^{-1}_{e,a}+\rK^{-1}_{\overline{e},a})\,,
\]
where~$\wh{\rK}={i\rU^*\rK\rU}$ is a \emph{real} (anti-symmetric) matrix,~$\eta_e$,~$\eta_{\overline{e}}$ and~$\eta_a$ {denote complex conjugates of the square roots of the directions of the corresponding oriented edges multiplied by~$\zeta=e^{i\frac{\pi}{4}}$ for the notational convenience,}~$t_e=(x_e+x_e^{-1})^{1/2}$ and~$z_e$ is the midpoint of an edge~$e$. In the double-Ising model context, one can use the same definition with the Kac--Ward matrix~$\rK$ replaced by its appropriate modification~$\widetilde{\rK}$:
\[
\widetilde{F}_a(z_e)~:=~ t_e\cdot {(-i\eta_a)}(\widetilde{\rK}^{-1}_{e,a}+\widetilde{\rK}^{-1}_{\overline{e},a})\,.
\]

It is easy to check that the matrix~$i\rU^*\widetilde{\rK}\rU$ is real-valued and hence~$\widetilde{\rK}^{-1}_{e,e'}\in { i{\eta}_e^{\vphantom{|}}\overline{\eta}_{e'}}\R$, similarly to the entries of the matrix~$\rK^{-1}$. Further, it is not hard to argue that the functions~$\widetilde{F}_a(\cdot)$ introduced above satisfy the (generalized) s-holomorphicity condition given by Definition~\ref{def:s-hol}, for properly defined values~$\widetilde{F}_a(c)$ at corners~$c\in V(G^\rC)$. Such values can be constructed, for instance, using the same linear combinations
\[
\widetilde{F}_a(c)~:=~{(-2i\eta_a)\,\cdot}\sum\nolimits_{e:o(e)=v(c)} \big[({\rB}^*)^{-1}_{c,e}\cdot t_e\rK^{-1}_{e,a}\big]
\]
as in the single-Ising model context (where this linear relation is nothing but the expression~\eqref{eqn:B-chi-to-phi} of the ``corner'' variable~$\chi_c$ in terms of the nearby ``edge'' variables~$\phi_e$). The reason why~$F_a(\cdot)$ satisfies the s-holomorphicity condition away from the ``source'' edge~$a$ is as follows: the collection of equalities for all pairs~$(e,c)$ around a fixed vertex~$v=o(e)=v(c)\in V(G)$ is \emph{equivalent} to the collection of equalities
\[
\rK^{-1}_{\overline{e},a}~=-~\sum\nolimits_{e':o(e')=v}\rK^{\vphantom{-1}}_{e,e'}\rK^{-1}_{e,a}\quad \text{for~all~$e$~with~$o(e)=v$},
\]
provided $\rK^{-1}_{e,a}\in {i{\eta}_e^{\vphantom{|}}\overline{\eta}_a}\R$\,, e.g. see~\cite[Section~2.1]{Lis-2013}. Since~$\widetilde{\rK}_{e,e'}=\rK_{e,e'}$ unless~$e=e'\in \EE_\partial(G)$, the latter local relations hold true with the entries~$\rK^{-1}_{e,a}$ replaced by~$\widetilde{\rK}^{-1}_{e,a}$ whose complex phases coincide with those of~$\rK^{-1}_{e,a}$\,.

\begin{remark} Of course, expanding the corresponding minors of~$\widetilde{\rK}$, it is also possible to give a purely combinatorial definition of all the values~$\widetilde{F}_a(\cdot)$ in the style of definition~\eqref{eqn:stas-observable-def} but with a more involved set of ``double-Ising domain walls'' configurations to sum over, and to check the s-holomorphicity relations directly in the style of~\cite[Section~4]{smirnov-icm-2010} or~\cite[Section~2.2]{chelkak-smirnov}.
\end{remark}

Let us now briefly discuss the boundary conditions for s-holomorphic functions which naturally replace~\eqref{eqn:boundary-conditions} in the double-Ising model context. If~$e\ne\overline{a}$ is an \emph{outward} oriented boundary edge of~$G$, then
\[
\widetilde{\rK}^{\vphantom{1}}_{\overline{e},\overline{e}}\cdot\widetilde{\rK}^{-1}_{\overline{e},a} +\widetilde{\rK}^{\vphantom{1}}_{\overline{e},e}\cdot\widetilde{\rK}^{-1}_{e,a}~=~0\,,
\]
which leads to
\[
\widetilde{F}_a(z_e)~=~t_e\cdot {(-i\eta_a)}(\widetilde{\rK}^{-1}_{\overline{e},a} +\widetilde{\rK}^{-1}_{e,a})~=~{t_e(1-ix_e)\cdot (-i\eta_a)}\widetilde{\rK}^{-1}_{\overline{e},a}\,.
\]
As~$(1-ix_e)=(1+x_e^2)e^{-\frac{i}{2}\theta_e}$ and $\eta_a\widetilde{\rK}^{-1}_{\overline{e},a}\in i\eta_{\overline{e}}\R=\eta_e\R$,
we arrive at the following claim:
\begin{equation}
\label{eqn:dblI-boundary-conditions}
\widetilde{F}_a(z_e)~\in~{ie^{-\frac{i}{2}\theta_e} \eta_{e}}\R\quad\text{for~all~outward~oriented~boundary~edges~}e\neq\overline{a}.
\end{equation}
(Note that the factor~$e^{-\frac{i}{2}\theta_e}$ appeared as a result of adjusting the entry~$\widetilde{\rK}_{\overline{e},\overline{e}}$ and would disappear, leading back to~\eqref{eqn:boundary-conditions}, if one works with the original matrix~$\rK$ instead of~$\widetilde{\rK}$.)

\begin{remark}
\label{rem:dblI-boundary-conditions} The s-holomorphic functions~$\widetilde{F}_a(\cdot)$ discussed above (or their spinor analogs constructed via the matrices~$\widetilde{\rK}_{[u_1,..,u_m]}^{-1}$ instead of~$\widetilde{\rK}^{-1}$, see Proposition~\ref{prop:spin4}) can be used to study (the scaling limits of) correlation functions in the critical double-Ising model, cf.~Remark~\ref{rem:formulas-via-K^-1} and Remark~\ref{rem:towards-scaling-limits-single}. Nevertheless, let us emphasize that the additional factor~$e^{-\frac{i}{2}\theta_e}$ in~\eqref{eqn:dblI-boundary-conditions} is \emph{lattice dependent} on the one hand, and the s-holomorphicity condition is \emph{not} complex-linear on the other (thus, even when working on regular lattices with constant weights~$x_e=x_{\opname{crit}}$, one cannot just multiply the function~$\widetilde{F}_a(\cdot)$ by~$e^{\frac{i}{2}\theta_e}$ to pass from~\eqref{eqn:dblI-boundary-conditions} to~\eqref{eqn:boundary-conditions}). Interestingly enough, boundary conditions~\eqref{eqn:dblI-boundary-conditions} admit a \emph{lattice independent} reformulation in terms of the discrete primitive~$\int\Im[(\widetilde{F}_a(z))^2dz]$ but it is still not clear how to pass to the limit of such s-holomorphic functions even in general smooth planar domains, cf. Remark~\ref{rem:towards-scaling-limits-single}.
\end{remark}

We conclude this section with a brief informal discussion of s-holomorphic functions that appear when considering the double-Ising model with Dobrushin boundary conditions~\eqref{eqn:dblI-Dobrushin-bc} instead of `$+$' ones. For a fixed pair of inward oriented boundary edges~$a$ and~$b$, let us slightly modify definition~\eqref{eqn:tilde-Kab-entries} and introduce a matrix~$\widetilde{\rK}^{\pm}_{e,e'}$, whose entries are still given by~\eqref{eqn:tilde-Kab-entries} but the row~$a$ and the column~$b$ are \emph{not} removed this time (also, note that we do not adjust the values~$\rK_{a,a}=\rK_{b,b}=0$ when defining~$\widetilde{\rK}^\pm$). Further, let
\[
\widetilde{F}^\pm_a(z_e):= t_e\cdot {(-i\eta_a)}((\widetilde{\rK}^\pm)^{-1}_{e,a} +(\widetilde{\rK}^\pm)^{-1}_{\overline{e},a})\qquad\text{and}\qquad M_{G,a,b}(z_e):= {\widetilde{F}^\pm_a(z_e)}\,\big/\,{\widetilde{F}^\pm_a(z_b)}\,.
\]

Similarly to~\cite[Section~2.2]{chelkak-smirnov}, one can work out combinatorial expansions of~$F^\pm_a(z_e)$ and show that the quantities~$M_{G,a,b}(z_e)$ are discrete martingales in the double-Ising model with respect to the \emph{interface}~$\gamma_{a,b}$ (domain wall separating double-Ising spins~$\widetilde{\sigma}_u=\pm 1$) generated by Dobrushin boundary conditions~\eqref{eqn:dblI-Dobrushin-bc}. Note that the key observation leading to this martingale property is given by Theorem~\ref{thm:KW4}, which relates the denominator of~$M_{G,a,b}(\cdot)$ with the partition function of the double-Ising model with Dobrushin boundary conditions. Having such a collection of discrete martingales, one could try to implement the same strategy as in~\cite{chelkak-smirnov,CDCHKS} for the critical \emph{double}-Ising model, first proving the convergence of these s-holomorphic observables to some scaling limits and then analyzing the putative limit of the interfaces~$\gamma_{a,b}$ using the limits of discrete martingales~$M_{G,a,b}(\cdot)$.

\begin{remark}
\label{rem:dblI-martingale-strategy-oops}
Though the above strategy of studying the single interface~$\gamma_{a,b}$ generated by Dobrushin boundary conditions~\eqref{eqn:dblI-Dobrushin-bc} in the critical double-Ising model looks rather promising at first sight, we expect conceptual obstacles along the way. The reason is that the critical double-Ising model with Dobrushin boundary conditions is (rather surprisingly) conjectured to~\emph{lose} the domain Markov property when passing from discrete to continuum, see~\cite[p.~3]{Wilson-xor}. At the level of convergence of solutions to the relevant discrete boundary value problems, this should mean that the boundary conditions~\eqref{eqn:dblI-boundary-conditions} have \emph{different} limits on the smooth boundary and on the fractal one, generated by the (first part of the) interface~$\gamma_{a,b}$ itself. In fact, it is even not easy to guess what the latter limit should be, not speaking about proving the relevant convergence theorem for discrete martingales~$M_{G,a,b}(\cdot)$ discussed above.
\end{remark}


\end{document}